\newtheorem{thm}{Theorem}[section]
\newtheorem{cor}[thm]{Corollary}
\newtheorem{defi}{Definition}[section]
\newtheorem{assume}{Assumption}[section]
\newtheorem{lemma}[thm]{Lemma}
\newtheorem{ex}{Example}[section]
\newtheorem{remark}{Remark}[section]
\newcommand{\ttheta}{\boldsymbol{\theta}}
\newcommand{\I}{\mathbf{I}_{\mathnormal{d}}}
\DeclareMathOperator{\bu}{\textbf{u}}
\DeclareMathOperator{\bw}{\textbf{w}}
\DeclareMathOperator{\bh}{\textbf{h}}
\DeclareMathOperator{\bx}{\textbf{x}}
\DeclareMathOperator{\bz}{\textbf{z}}
\DeclareMathOperator{\w}{{\mathnormal \mathbf{w}}}
\DeclareMathOperator{\ft}{\tilde{\mathnormal f}}
\DeclareMathOperator{\QQCW}{\QQ_{\ttheta}^{\text{CW}}}
\newcommand{\thetahat}{\hat{\ttheta}_{\mathnormal n}}
\DeclareMathOperator{\xp}{\xrightarrow{\mathnormal p}}
\DeclareMathOperator{\xd}{\xrightarrow{\mathnormal d}}
\newcommand{\an}{\alpha_{\mathnormal n}}
\DeclareMathOperator{\ggamma}{\boldsymbol{\gamma}}
\DeclareMathOperator{\ssigma}{\boldsymbol{\sigma}}
\DeclareMathOperator{\ddelta}{\boldsymbol{\delta}}
\DeclareMathOperator{\sech}{sech}
\DeclareMathOperator{\EE}{\mathbb{E}}
\DeclareMathOperator{\Var}{Var}
\DeclareMathOperator{\Cov}{Cov}
\DeclareMathOperator{\PP}{\mathbb{P}}
\DeclareMathOperator{\PPB}{\bar{\mathbb{P}}}
\DeclareMathOperator{\QQ}{\mathbb{Q}}
\newcommand{\A}{\mathbf{A}}
\newcommand{\W}{\mathbf{W}}
\newcommand{\x}{\mathbf{x}}
\newcommand{\Z}{\mathbf{Z}}
\newcommand{\X}{\mathbf{X}}
\newcommand{\xxi}{\boldsymbol \xi}
\DeclareMathOperator{\sumin}{\sum_{\mathnormal i=1}^{\mathnormal n}}
\DeclareMathOperator{\sumjn}{\sum_{\mathnormal j=1}^{\mathnormal n}}
\DeclareMathOperator{\sumij}{\sum_{\mathnormal{ i,j}=1}^{\mathnormal n}}
\newcommand{\thetaiid}{\hat{\boldsymbol \theta}^{\text{iid}}_{\mathnormal n}}
\newcommand{\thetamle}{\hat{\boldsymbol \theta}^{\text{MLE}}_{\mathnormal n}}
\newcommand{\thetamf}{\hat{\boldsymbol \theta}^{\text{MF}}_{\mathnormal n}}
\newcommand{\thetaamle}{\hat{\boldsymbol \theta}^{\text{aMLE}}_{\mathnormal n}}
\DeclareMathOperator*{\argmax}{arg\,max}
\DeclareMathOperator*{\argmin}{arg\,min}
\begin{document}

\begin{frontmatter}

\title{Inference on Gaussian mixture models with dependent labels}
\runtitle{Dependent Gaussian mixture models}
\runauthor{Lee, Mukherjee, Mukherjee}

\begin{aug}

\author[A]{\fnms{Seunghyun}~\snm{Lee}\ead[label=e1]{sl4963@columbia.edu}},
\author[B]{\fnms{Rajarshi}~\snm{Mukherjee}\ead[label=e2]{ram521@mail.harvard.edu}}
\and
\author[A]{\fnms{Sumit}~\snm{Mukherjee}\ead[label=e3]{sm3949@columbia.edu}}

\address[A]{Department of Statistics, Columbia University \printead[presep={,\ }]{e1,e3}}

\address[B]{Department of Biostatistics, Harvard University\printead[presep={,\ }]{e2}}
\end{aug}

\begin{abstract}
    Gaussian mixture models are widely used to model data generated from multiple latent sources. Despite its popularity, most theoretical research assumes that the labels are either independent and identically distributed, or follows a Markov chain.  It remains unclear how the fundamental limits of estimation change under more complex dependence. In this paper, we address this question for the spherical two-component Gaussian mixture model.
    We first show that for labels with an arbitrary dependence, a naive estimator based on the misspecified likelihood is $\sqrt{n}$-consistent. Additionally, under labels that follow an Ising model, we establish the information theoretic limitations for estimation, and discover an interesting phase transition as dependence becomes stronger. %
    When the dependence is smaller than a threshold, the optimal estimator and its limiting variance exactly matches the independent case, for a wide class of Ising models. %
    On the other hand, under stronger dependence, estimation becomes easier and the naive estimator is no longer optimal. Hence, we propose an alternative estimator based on the variational approximation of the likelihood, and argue its optimality under a specific Ising model. %
\end{abstract}

\begin{keyword}[class=MSC]
\kwd{62F10, 62F12}
\end{keyword}

\begin{keyword}
\kwd{Gaussian mixture model}
\kwd{hidden Markov random field}
\kwd{Ising model}
\kwd{local asymptotic normality}
\kwd{phase transition}
\kwd{mean-field approximation}
\end{keyword}

\end{frontmatter}

\section{Introduction}
Inference under the presence of latent mixing variables is a classical research area that remains highly relevant in modern statistical paradigms. %
In the most general setting, an investigator observes %
some variables of primary interest  -- where the observations are \textit{conditionally independent on some unobserved latent variables.}  %
Owing to both the theoretical and computational challenges that arise due to the hidden nature of the latent variables, significant research has been devoted to addressing how to learn the conditional %
distribution of the observed data, among other things.  The subtlety of the problem deepens when the hidden variables display dependence.
A growing body of research has made substantive progress in this regard by developing scalable methods under dependent models such as Hidden Markov Models (HMM) and Hidden Markov Random Fields (HMRF). For both HMMs and HMRFs and other related models of study, the focus mostly has been distributed across both statistical and computational efficiency considerations. However, unlike classical mixture models for independent hidden mixing variables, theoretical explorations for dependent latent variables is somewhat limited %
to HMMs. In this paper, we take the first steps to fill this gap by initiating a study of the two-class symmetric Gaussian mixture model with dependent mixing labels, and developing a theory of optimal inference therein.

\subsection{Problem formulation and challenges} 
We consider observing $d$-dimensional random vectors $\X_1,\ldots,\X_n$ generated from latent labels $ \Z^n := (Z_1, \ldots, Z_n)\in \{-1,+1\}^n$, as follows:
\begin{equation}\label{eq:model}
\begin{aligned}
    \Z^n := (Z_1, \ldots, Z_n) \sim \QQ_{0}, \quad \X_i \mid \Z^n \equiv \X_i \mid Z_i &\overset{\text{ind}}{\sim} N_d(\ttheta Z_i, \I), \quad i = 1, \ldots, n.
\end{aligned}
\end{equation}
This paper's primary goal is optimal estimation of the %
mean parameter $\ttheta \in \Theta := \mathbb{R}^d \setminus \{\mathbf{0}\}$, and how this is affected by $\mathbb{Q}_0$. 
To begin, note that the distribution of $\X$ under $\ttheta$ and $-\ttheta$ are the same. %
To ensure identifiability, we assume that the true parameter $\ttheta_0$ lives in the half-space $$\Theta_1 := \{\ttheta: \theta_1 > 0\} \cup \{\theta_1 = 0, \theta_2 > 0 \} \cup \cdots \cup \{\theta_1 = 0, \ldots, \theta_{d-1} = 0, \theta_d > 0 \}.$$ We also define $\Theta_2 = - \Theta_1$, so that $\Theta_1 \cup \Theta_2 = \Theta$.
In particular when $d = 1$, $\Theta_1$ is simply $\{\theta: \theta> 0 \}$.
If $\QQ_{0}$ represents a $n$-fold product measure on $\{-1,1\}^n$, the model reduces to the classical symmetric two-class isotropic Gaussian mixtures problem. Even this simple model has served as the basis for understanding several statistical challenges in unsupervised learning \citep[][and the references therein]{daskalakis2017ten, balakrishnan2017statistical, wu2020optimal, wu2021randomly, ndaoud2022sharp}. Interestingly, as these literature suggests, 
a complete understanding of even this model can be subtle from both theoretical and algorithmic perspectives, and has therefore attracted the keen attention of researchers across several quantitative domains. %
However, a parallel theory for more general $\QQ_0$ remains lacking. 

A natural class of problems that have evolved to extend this domain pertains to a specific class of $\QQ_0$ arising in the context of Markov Random Fields (MRF) \citep{clifford1971markov, besag1974spatial}. When $\QQ_0$ corresponds to a MRF on a given network, model \eqref{eq:model} is known in the literature as the Hidden Markov Random Field (HMRF) \citep{kunsch1995hidden, besag1986statistical} and a parallel literature have enriched the methodological arsenal for inference in HMRFs. However, to the best of our knowledge, rigorous theoretical guarantees or issues of statistical efficiency are yet to be thoroughly explored. In this paper, we take one of the first rigorous steps to quantify efficient statistical estimation of $\ttheta_0$ under some mean-field type HMRFs. %
As we will see below, the rate of estimation of $\ttheta_0$ is not affected by the choice of $\QQ_0$, whereas the efficient information bound for estimating $\ttheta_0$ is. To illustrate this, we focus on the case where $\QQ_0$ is an Ising model on a dense graph, and 
establish efficiency theory under various regimes of dependence. %
We provide a brief summary of these results below.

\subsection{Summary of results}
We develop a statistical theory for efficient estimation in model \eqref{eq:model}, under various types of label dependence.
We present our main contributions in three subsections: Sections \ref{sec:iid estimator}, \ref{subsec:high temperature cw}, and \ref{subsec:low temperature cw}. In the following, we summarize our main results.

\begin{figure}[h!]
    \centering
    \includegraphics[width = 0.9\textwidth]{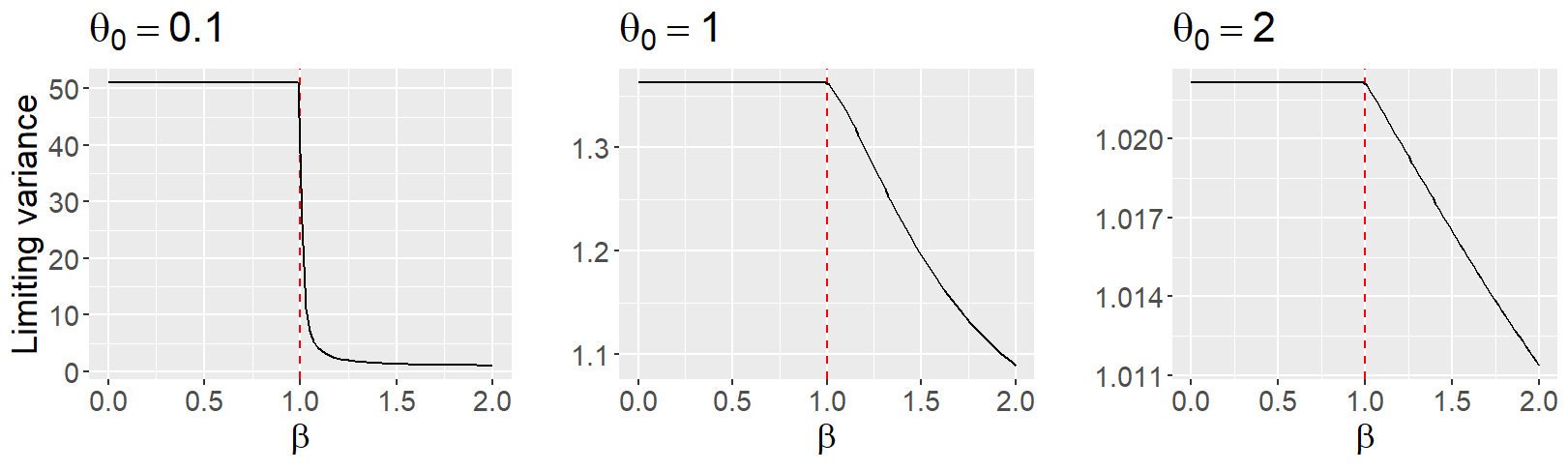}
    \caption{Plot of the (scaled) optimal limiting variance with respect to the dependence parameter $\beta \in [0,2]$, under Curie-Weiss labels. The hardness of estimation changes at $\beta = 1$, regardless of the true parameter $\ttheta_0$. 
    note that the scale of the $y$-axis is different for each panel.}
    \label{fig:info_wrt_beta}
\end{figure}

\begin{itemize}[itemsep = 0.5em]
    \item In \cref{sec:iid estimator}, we show that there exists an estimator that is $\sqrt{n}$-consistent with the same limiting distribution for \emph{any} label distribution $\QQ_0$. Surprisingly, the estimator we consider is the MLE computed under iid labels, which we denote as $\thetaiid$. In other words, the estimator under the misspecified likelihood attains the usual parametric (and optimal) rate for estimation. Additionally, we argue that $\thetaiid$ can be easily computed by an EM algorithm resulting from the misspecified likelihood.
    
    \item In \cref{sec:Curie-Weiss}, we assume a specific dependent parametrization for the labels and analyze the information-theoretic optimal limiting variance. We consider the Ising model to model the dependent labels. The Ising model is a popular Markov random field that flexibly handles network-type dependencies. This model has a parameter $\beta \ge 0$ that reflects the strength of dependence; $\beta = 0$ corresponds to the iid distribution, and a larger $\beta$ leads to stronger dependence. In \cref{fig:info_wrt_beta}, we plot the optimal limiting variance with respect to $\beta$ under the Curie-Weiss version of the Ising model (formally defined in eq. \eqref{eq:CW pmf}). Compared to iid labels, estimation becomes easier under \emph{strong dependence} ($\beta > 1$, see \cref{subsec:low temperature cw}), but there is no improvement under \emph{weak dependence} ($\beta \le 1$, see \cref{subsec:high temperature cw}). %
    In the following bullet points, we separate the two regimes and elaborate on tractable alternatives to the MLE that still attain the information-theoretic variance.
    
    \item Under weak dependence, we show that the misspecified MLE $\thetaiid$ is optimal. 
    This claim holds for a large class of Ising models on ``mean-field'' graphs with the maximum degree larger than $\sqrt{n \log n}$ (see \cref{defi:mean-field} for the precise condition). Thus, for dependent labels under weak dependence, the fundamental limit of estimation remains the same as that under iid labels, and one can even perform inference without any cost by blindly assuming iid labels.
    
    \item Under strong dependence, $\thetaiid$ is no longer optimal, and we propose a more efficient estimator $\thetamf$ based on the variational approximation of the marginal likelihood.
    When the underlying Ising model is mean-field and satisfies some additional conditions such as regularity (see \cref{defi:regularity}), $\thetamf$ is asymptotically normal with a strictly less variance compared to $\thetaiid$.
    However, due to technical reasons, we prove the optimality of $\thetamf$ only for Curie-Weiss labels.

    \item We also summarize properties of the estimators $\thetaiid$ and $\thetamf$ in \cref{tab:estimators summary}.
\end{itemize}

\begin{table}[h!]
\centering
\caption{Summary of the properties of estimators under various label dependencies. MF denotes ``mean-field'' Ising models (see \cref{defi:mean-field}) and CW denotes the Curie-Weiss model (see eq. \eqref{eq:CW pmf}).}
\begin{tabular}{c|cccc}
\toprule
\multirow{2}{*}{Estimator $\setminus$ Distribution $\QQ_0$} & \multirow{2}{*}{arbitrary} & \multicolumn{3}{c}{Ising model}
\\ \cmidrule{3-5}
                           &                                          & MF, $\beta <1$ or CW, $\beta \le 1$ & MF+regular, $\beta > 1$ &  CW, $\beta > 1$ \\
    \midrule
        $\thetaiid$ & $\sqrt{n}$-consistent & optimal & not optimal & not optimal \\
        $\thetamf$ & not defined & optimal & better than $\thetaiid$ & optimal \\
        \bottomrule
\end{tabular}
    \label{tab:estimators summary}
\end{table}

\subsection{Notations}
We use the following notations in the remainder of the paper. First, we use bold capital letters (e.g. $\X$) to denote matrices and random vectors, bold lower-case letters (e.g. $\mathbf{x}$) to denote deterministic vectors, and non-bold letters to denote scalars (e.g. $X, x$). The symbols 
$\|\cdot \|$ and $ \|\cdot\|_\infty$ denotes the $L^2$ and $L^\infty$ norm for a vector/matrix, respectively. For two symmetric $d\times d$ matrices $\mathbf{C}_d$ and $\mathbf{D}_d$, we write $\mathbf{C}_d \succ \mathbf{D}_d$ and $\mathbf{C}_d \succeq \mathbf{D}_d$ when $\mathbf{C}_d - \mathbf{D}_d$ is positive definite and positive semi-definite, respectively. Let $\mathbf{0}_d, \I$ denote the $d$-dimensional zero-vector and identity matrix, respectively. Let $\text{Rad}(p)$ denote the Radamacher distribution on $\{-1,1\}$ with probability of $1$ equal to $p$. For two probability measures $\PP,\QQ$, $\text{KL}(\PP \| \QQ)$ denotes the KL divergence of $\PP$ from $\QQ$. Also for any vector $\pmb{\upsilon}=(\upsilon_1,\ldots,\upsilon_k)^\top\in \mathbb{R}^k$ we will denote by $\bar{\pmb{\upsilon}}=\frac{1}{k}\sum_{j=1}^k\upsilon_j$.

As most results in this paper are asymptotic in $n$, we also introduce asymptotic notations. We use the standard Bachmann-Landau notations $o(\cdot), O(\cdot)$ for deterministic sequences. The symbols $\xp$ and $\xd$ denote convergence in probability and in distribution, respectively.
For a sequence of random variables $\{Y_n\}_{n \ge 1}$ and a deterministic positive sequence $\{a_n\}_{n \ge 1}$, we write $Y_n = o_p(a_n)$ when $\frac{Y_n}{a_n} \xp 0$, and $Y_n = O_p(a_n)$ when $\lim_{K \to \infty} \lim_{n \to \infty} \PP(\frac{|Y_n|}{a_n} \le K) = 1$, respectively. We also use the same asymptotic notations for finite-dimensional random vectors $\{\mathbf{Y}_n \}_{n \ge 1}$, by writing $\mathbf{Y}_n = o_p(a_n)$ and $\mathbf{Y}_n = O_p(a_n)$ when $\|\mathbf{Y}_n\| = o_p(a_n)$ and $\|\mathbf{Y}_n\| = O_p(a_n)$, respectively.

\section{Main results}
\cref{sec:iid estimator} shows that parametric rate-optimal estimation is possible for any dependence $\QQ_0$. Next, \cref{sec:Curie-Weiss} considers Ising model labels and propose information-theoretic limits and optimal estimators. Throughout the paper, let $P_{\ttheta_0, \QQ_{0}} = P_{\ttheta_0, \QQ_0}^{(n)}$ be the distribution of $\X^n$ defined in \eqref{eq:model}, under the true parameter $\ttheta_0 \in \Theta_1$ and label distribution $\QQ_{0}$. %

\subsection{Universal $\sqrt{n}$-consistent estimation}\label{sec:iid estimator}
We first gather some intuition of the problem from studying the i.i.d. label version of the problem, i.e., when $\QQ_0$ is a product measure. %
Indeed then, \eqref{eq:model} reduces to the classical symmetric isotropic Gaussian mixture problem -- a research area that has continued to witness repeated interest from the quantitative research community as a fundamental object of study in statistics. Specifically with iid labels $Z_i \overset{\text{iid}}{\sim} \text{Rad}(0.5)$, after marginalizing out the label $Z_i$'s, traditional asymptotic theory shows that the maximum likelihood estimator 
\begin{equation}\label{eq:iid estimator definition}
    \thetaiid := \argmin_{\ttheta \in \Theta_1} \left[\frac{\ttheta^\top \ttheta}{2} - \frac{1}{n} \sumin \log \cosh(\ttheta^\top \X_i) \right]
\end{equation}
is $\sqrt{n}$-consistent and asymptotically optimal in the sense of attaining the information theoretic lower bound. In terms of computation, %
it is well-known that the EM algorithm with a random initialization is guaranteed to converge to the MLE at a geometric rate \citep{xu2016global, daskalakis2017ten, klusowski2016statistical}.
However, the problem changes drastically when the labels are dependent. A faithful statistician would expect that the MLE 
\begin{equation}\label{eq:MLE definition}
    \hat{\ttheta}_n^{\text{MLE}} := \argmin_{\ttheta \in \Theta_1} \left[\frac{\ttheta^\top \ttheta}{2} - \frac{1}{n} \log \left( \sum_{\mathbf{z} \in \{-1,1\}^n} \QQ_0(\mathbf{z}) e^{\ttheta^\top \sumin \X_i z_i}\right) \right]
\end{equation}
will still be optimal. A further simplification of the summation inside the log in \eqref{eq:MLE definition} is impossible due to the arbitrary dependence within $\QQ_0$. The data $\X^n$ also becomes dependent, breaking down the classical theory. Consequently, analyzing the MLE and understanding the informational theoretic lower bound becomes nontrivial. In terms of computation, the EM algorithm slows down significantly as each E-step involves summing over $2^n$ terms, and global convergence is yet to be studied. 

We tackle these issues below by considering the naive estimator $\thetaiid$ and show that it has a limiting distribution that does not depend on $\QQ_0$.  Suppose that $\Z^n \sim \QQ_0$ is \emph{arbitrarily} distributed on $\{-1, 1\}^n$, and observe $\X^n \sim P_{\ttheta_0, \QQ_0}$ for some true parameter $\ttheta_0 \in \Theta_1$. %

To simplify notations, let 
$$N_n(\ttheta) := \frac{\ttheta^\top \ttheta}{2} - \frac{1}{n} \sumin \log \cosh( \ttheta^\top \X_i)$$
be the re-scaled negative log-likelihood under i.i.d labels $\Z^n$. Then, \eqref{eq:iid estimator definition} becomes $\thetaiid = \argmin_{\ttheta\in \Theta_1} N_n(\ttheta)$.
Also define
\begin{align*}
    N_{\infty}(\ttheta) &:= \frac{\ttheta^\top \ttheta}{2} - \EE_{\X\sim N_d(\ttheta_0, \I)} \log \cosh( \ttheta^\top \X),
\end{align*}
which is the weak limit of $N_{n}(\ttheta)$.
To see this, note that $\log \cosh$ is an even function, and consequently the distribution of $\log \cosh (\ttheta^\top \X_1) \mid Z_1$
is the same for $Z_1 = \pm 1$. Thus, by the conditional law of large numbers for independent random variables, 
\begin{align*}
    &\frac{1}{n} \sumin \left(\log \cosh(\ttheta^\top \X_i) - \EE\left[\log \cosh(\ttheta^\top \X_i) \mid Z_i \right] \right) \mid \Z^n \\
    =& \frac{1}{n} \sumin \log \cosh(\ttheta^\top \X_i) - \EE_{\X\sim N_d(\ttheta_0, \I)} \log \cosh(\ttheta^\top \X) \mid \Z^n \xp 0,
\end{align*}
and $N_n(\ttheta)$ converges to $N_{\infty}(\ttheta)$ in probability, regardless of the distribution of $\Z^n$. Note that the function $N_{\infty}$ also depend on the true parameter $\ttheta_0$, but we do not display this explicitly as $\ttheta_0$ is \emph{fixed} throughout. To understand why the minimizer of $N_n(\ttheta)$ is close to $\ttheta_0$, we present  the following Lemma to show that the limiting objective function $N_{\infty}$ is uniquely minimized at $\ttheta= \ttheta_0$. 

\begin{lemma}\label{lem:uniqueness iid}
    $N_{\infty}:\Theta_1 \to \mathbb R$ is differentiable in $\textnormal{int}(\Theta_1)$ and uniquely minimized at $\ttheta= \ttheta_0$. Furthermore, $\ttheta_0$ is the unique solution of $(\nabla N_{\infty})(\ttheta) = \mathbf{0}_d$ in $\textnormal{int}(\Theta_1)$.
\end{lemma}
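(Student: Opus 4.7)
My plan is to argue in three stages: (i) differentiate $N_\infty$ under the expectation via dominated convergence; (ii) identify $N_\infty$, up to an additive constant, with a KL divergence between symmetric iid mixtures in order to pin down the unique minimizer; and (iii) use Stein's lemma to collapse the critical-point search to a ray, then a one-dimensional monotonicity argument to exclude spurious zeros of the gradient. For (i), $\ttheta \mapsto \log\cosh(\ttheta^\top\X)$ is smooth with gradient $\tanh(\ttheta^\top\X)\X$ of norm at most $\|\X\|$ and Hessian entries bounded by $\|\X\|^2$, both integrable under $N_d(\ttheta_0,\I)$; hence
\[
\nabla N_\infty(\ttheta) = \ttheta - \EE_{\X\sim N_d(\ttheta_0,\I)}[\tanh(\ttheta^\top\X)\X].
\]

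For (ii), let $p_\ttheta(\x) := \tfrac{1}{2}\phi(\x-\ttheta) + \tfrac{1}{2}\phi(\x+\ttheta)$ be the iid symmetric mixture density, so that $\log p_\ttheta(\x) = \log\phi(\x) - \|\ttheta\|^2/2 + \log\cosh(\ttheta^\top\x)$. Since $\log\cosh$ is even, $\EE_{p_{\ttheta_0}}[\log\cosh(\ttheta^\top\X)] = \EE_{N_d(\ttheta_0,\I)}[\log\cosh(\ttheta^\top\X)]$, which after rearrangement yields
\[
N_\infty(\ttheta) - N_\infty(\ttheta_0) = \text{KL}(p_{\ttheta_0} \,\|\, p_\ttheta) \geq 0,
\]
with equality iff $p_\ttheta = p_{\ttheta_0}$ almost everywhere. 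The classical identifiability of the spherical symmetric two-component Gaussian mixture gives $p_\ttheta = p_{\ttheta_0}$ iff $\ttheta \in \{\pm\ttheta_0\}$; intersecting with $\Theta_1$ leaves only $\ttheta = \ttheta_0$.

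For (iii), Stein's lemma for $\X\sim N_d(\ttheta_0,\I)$ gives $\EE[\tanh(\ttheta^\top\X)\X] = B(\ttheta)\ttheta_0 + A(\ttheta)\ttheta$, where $A(\ttheta):=\EE[\sech^2(\ttheta^\top\X)]$ and $B(\ttheta):=\EE[\tanh(\ttheta^\top\X)]$, so $\nabla N_\infty(\ttheta) = (1 - A(\ttheta))\ttheta - B(\ttheta)\ttheta_0$. If $\ttheta$ and $\ttheta_0$ were linearly independent at a critical point, both scalar coefficients would vanish, and $A(\ttheta)=1$ would force $\sech^2(\ttheta^\top\X)\equiv 1$ a.s., hence $\ttheta=\mathbf{0}_d$, contradicting $\ttheta \in \textnormal{int}(\Theta_1)$. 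So every critical point in $\textnormal{int}(\Theta_1)$ lies on the ray $\{t\ttheta_0 : t > 0\}$. Writing $\ell(t) := N_\infty(t\ttheta_0)$ and $W := \ttheta_0^\top\X \sim N(\mu,\mu)$ with $\mu = \|\ttheta_0\|^2$, one has $\ell''(t) = \mu - \EE[W^2\sech^2(tW)]$, whose $t$-derivative equals $-2\,\EE[W^3\sech^2(tW)\tanh(tW)] \leq 0$ since $W^3$ and $\sech^2(tW)\tanh(tW)$ share the same sign for $t > 0$. Thus $\ell''$ is nondecreasing on $(0,\infty)$, passes from $\ell''(0^+)=-\mu^2 < 0$ to $+\mu > 0$, and crosses zero exactly once; consequently $\ell'$ is unimodal. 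Since $\ell'(0^+)=0$, $\ell'$ has at most one further zero on $(0,\infty)$, and by step (ii) this zero must be $t=1$.

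The main obstacle is precisely (iii): $N_\infty$ is globally non-convex (its Hessian at the origin equals $-\ttheta_0\ttheta_0^\top$), so uniqueness of the critical point cannot be argued from convexity. The Stein decomposition first reduces the problem to a one-parameter question on the ray through $\ttheta_0$, and then the sign-coupling identity for $\tfrac{d}{dt}\EE[W^2\sech^2(tW)]$ delivers the monotonicity of $\ell''$ required to force $t=1$ as the only positive zero of $\ell'$.
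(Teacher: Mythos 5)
Your argument is correct in substance and takes a genuinely different route from the paper's. The paper disposes of uniqueness of the critical point by invoking Theorem~2 of Daskalakis et al.\ (2017): the map $T(\ttheta):=\EE_{\X\sim N_d(\ttheta_0,\I)}\X\tanh(\ttheta^\top\X)$ contracts toward $\ttheta_0$ when $\ttheta^\top\ttheta_0\neq 0$ and vanishes when $\ttheta^\top\ttheta_0=0$, which makes $\ttheta_0$ the only fixed point of $T$ (equivalently, the only zero of $\nabla N_\infty$) in $\text{int}(\Theta_1)$. You instead observe, via Stein's lemma, that the gradient lives in $\text{span}\{\ttheta,\ttheta_0\}$ with coefficients $1-A(\ttheta)$ and $-B(\ttheta)$; the impossibility of $A(\ttheta)=1$ for $\ttheta\neq\mathbf{0}_d$ kills any linearly independent critical point, reducing the problem to a one-dimensional ODE-style analysis of $\ell(t)=N_\infty(t\ttheta_0)$. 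This is more self-contained (no appeal to the EM contraction literature) and arguably more illuminating about the analytic structure of $N_\infty$, at the cost of an extra computation. The KL step and the differentiability step mirror the paper.

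Two small points to tidy. First, there is a sign slip: $\tfrac{d}{dt}\ell''(t)=-\tfrac{d}{dt}\EE[W^2\sech^2(tW)]=+2\,\EE[W^3\sech^2(tW)\tanh(tW)]$, which is $\ge 0$; your displayed expression has the opposite sign, even though your stated conclusion (that $\ell''$ is nondecreasing from $-\mu^2$ to $\mu$) is the correct one. Second, for the ``crosses zero exactly once'' step you should note that $\ell'''(t)>0$ \emph{strictly} for $t>0$ — since $W^3\sech^2(tW)\tanh(tW)\ge 0$ a.s.\ with equality only on the null event $\{W=0\}$ — so that $\ell''$ is strictly increasing and $\ell'$ has a single strict interior minimum; otherwise ``nondecreasing'' alone would leave open a flat stretch of $\ell''$ (the argument still survives that case, but it is cleaner to rule it out). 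With those repairs the proof is complete.
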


Based on this insight, Theorem \ref{thm:upper bound iid} shows that $\thetaiid$ is $\sqrt{n}$-consistent with a label-independent Normal limit. Thus, $\thetaiid$, the naive estimator that arises from the misspecified likelihood with independent labels, is always rate-optimal\footnote{The rate-optimality follows by noting that the MLE converges at the same $\sqrt{n}$-rate when all labels $\Z^n$ are known, and it is impossible to do better with an unknown $\Z^n$.}.

\begin{thm}\label{thm:upper bound iid}
Let $\QQ_0$ be an arbitrary measure on $\{-1, 1\}^n$ and $\X^n \sim P_{\ttheta_0,\QQ_{0}}$. Then, for $I_0(\ttheta_0) := \I - \EE_{\X\sim N_d(\ttheta_0, \I)} \X\X^{\top} \sech^2(\ttheta_0^\top \X)$, we have
\begin{equation}\label{eq:high tmp iid estimator expansion}
    \sqrt{n}(\thetaiid - \ttheta_0) = I_0(\ttheta_0)^{-1} \frac{1}{\sqrt{n}} \sumin \left(\X_i \tanh(\ttheta_0^\top \X_i) - \ttheta_0 \right) + o_p(1)
\end{equation}
and
    \begin{align*}%
    \sqrt{n} (\thetaiid - \ttheta_0) \xrightarrow{d} N_d\left(0, I_0(\ttheta_0)^{-1} \right) .
    \end{align*}
\end{thm}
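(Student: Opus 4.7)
My plan follows the classical M-estimator paradigm --- consistency of $\thetaiid$, Taylor expansion of its score equation, and a CLT for the leading stochastic term --- with one key observation that neutralizes the arbitrary dependence in $\QQ_0$. Since $\log\cosh$ is even and $\X_i\mid Z_i\sim N_d(Z_i\ttheta_0,\I)$, the random variables $\log\cosh(\ttheta^\top\X_i)$, $\X_i\tanh(\ttheta_0^\top\X_i)$, and $\X_i\X_i^\top\sech^2(\ttheta^\top\X_i)$ each have a conditional distribution given $Z_i$ that is invariant under $Z_i\mapsto -Z_i$. Combined with the conditional independence of the $\X_i$'s given $\Z^n$, each of these three families is then \emph{unconditionally} i.i.d., with the same law as under $\X\sim N_d(\ttheta_0,\I)$, regardless of $\QQ_0$. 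This reduction converts the remaining steps into standard i.i.d. asymptotics.

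For consistency, the bound $|\log\cosh(x)|\le|x|$ gives $N_n(\ttheta)\ge\tfrac12\|\ttheta\|^2-\|\ttheta\|\cdot\tfrac1n\sum_{i=1}^n\|\X_i\|$, which forces $\thetaiid$ into a bounded set with high probability. On compacta, the i.i.d. reduction plus an envelope bound $|\partial_j\log\cosh(\ttheta^\top\X_i)|\le\|\X_i\|$ upgrades the pointwise LLN to uniform convergence $N_n\to N_\infty$; combined with the uniqueness of the minimizer from \cref{lem:uniqueness iid}, the standard argmin-consistency argument gives $\thetaiid\xp\ttheta_0$. Since $\thetaiid$ is then an interior critical point of $N_n$ with high probability, $\nabla N_n(\thetaiid)=\mathbf{0}_d$, and a one-step Taylor expansion around $\ttheta_0$ gives $\mathbf{0}_d=\nabla N_n(\ttheta_0)+\nabla^2 N_n(\tilde\ttheta)(\thetaiid-\ttheta_0)$ for some $\tilde\ttheta$ on the segment joining $\thetaiid$ and $\ttheta_0$. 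The uniform-on-compacta LLN for the Hessian family, combined with $\tilde\ttheta\xp\ttheta_0$, gives $\nabla^2N_n(\tilde\ttheta)\xp I_0(\ttheta_0)=\nabla^2N_\infty(\ttheta_0)$. Since $\nabla N_n(\ttheta_0)=\ttheta_0-\tfrac1n\sum_i\X_i\tanh(\ttheta_0^\top\X_i)$, solving this yields the expansion \eqref{eq:high tmp iid estimator expansion}.

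For the limit law, set $U_i:=\X_i\tanh(\ttheta_0^\top\X_i)$. The unconditional-i.i.d. structure lets me apply the classical CLT to $\tfrac1{\sqrt n}\sum_{i=1}^n(U_i-\ttheta_0)$; the centering is correct because $\EE[U_1]=\ttheta_0$, which is immediate from $\nabla N_\infty(\ttheta_0)=\mathbf{0}_d$ in \cref{lem:uniqueness iid}. To identify the limiting covariance as $I_0(\ttheta_0)$, I use $\sech^2=1-\tanh^2$ together with $\EE[\X\X^\top]=\I+\ttheta_0\ttheta_0^\top$ under $\X\sim N_d(\ttheta_0,\I)$ to compute $\Var(U_1)=\EE[\X\X^\top\tanh^2(\ttheta_0^\top\X)]-\ttheta_0\ttheta_0^\top=\I-\EE[\X\X^\top\sech^2(\ttheta_0^\top\X)]=I_0(\ttheta_0)$, so the generic M-estimator sandwich $I_0(\ttheta_0)^{-1}\Var(U_1)I_0(\ttheta_0)^{-1}$ collapses to the Fisher-type form $I_0(\ttheta_0)^{-1}$, and Slutsky delivers the stated limit. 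The main obstacles are articulating the unconditional-i.i.d. reduction carefully and verifying that $I_0(\ttheta_0)$ is invertible; the latter should follow from \cref{lem:uniqueness iid} identifying $\ttheta_0$ as the unique interior critical point of $N_\infty$, which forces its Hessian at $\ttheta_0$ --- namely $I_0(\ttheta_0)$ --- to be nonsingular.
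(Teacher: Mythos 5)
Your core observation is a clean reformulation of the paper's argument. The paper proves the required uniform law of large numbers and central limit theorem (Lemmas~\ref{lem:ULLN high} and~\ref{lem: CLT high}) by conditioning on $\Z^n$, observing that the conditional law of the relevant summands does not depend on $\Z^n$ (because the functions involved are even in $\x$), and then invoking conditional limit theorems and taking expectations over $\Z^n$. You go one small step further and note that this constancy of the conditional law \emph{already} implies the summand sequences are \emph{unconditionally} i.i.d., with the same law as under $\X\sim N_d(\ttheta_0,\I)$: since the conditional joint law given $\Z^n$ is a fixed product measure $\mu^{\otimes n}$ regardless of $\Z^n$, so is the marginal. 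This buys you the ability to cite off-the-shelf i.i.d. ULLN and Lindeberg--L\'evy CLT rather than conditional versions, which is a genuine simplification of the exposition of these two lemmas even though the underlying mechanism --- evenness plus conditional independence neutralizing the dependence in $\QQ_0$ --- is the same. One caveat worth stating: for the covering argument behind uniform convergence you still rely on $\tfrac1n\sum_i\|\X_i\|$ and $\tfrac1n\sum_i\|\X_i\|^2$ being well-behaved, which works because $\|\x\|$ and $\|\x\|^2$ are themselves even, but the $\X_i$'s are not unconditionally i.i.d. and the reader should not be led to think otherwise.

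There is one genuine logical gap in your last sentence: the claim that $\ttheta_0$ being the unique interior critical point of $N_\infty$ ``forces'' $\nabla^2 N_\infty(\ttheta_0)=I_0(\ttheta_0)$ to be nonsingular is false as a general principle ($x\mapsto x^4$ has a unique critical point with vanishing second derivative). The invertibility of $I_0(\ttheta_0)$ has to be argued, and the cleanest route is already implicit in your own computation: you showed $I_0(\ttheta_0)=\Var(U_1)$ where $U_1=\X\tanh(\ttheta_0^\top\X)$ and $\X\sim N_d(\ttheta_0,\I)$. A covariance matrix is singular only if some nontrivial linear combination $a^\top U_1$ is almost surely constant; since $\X$ has a full-support density on $\mathbb{R}^d$ and $\x\mapsto a^\top\x\tanh(\ttheta_0^\top\x)$ is a nonconstant real-analytic function for every $a\neq\mathbf{0}$ (because $\ttheta_0\neq\mathbf{0}$), this cannot happen, so $I_0(\ttheta_0)\succ0$. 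Alternatively, one can extract the bound $\|DT(\ttheta_0)\|_{op}\le\kappa(\ttheta_0)<1$ from the contraction estimate of \cite{daskalakis2017ten} used in the proof of Lemma~\ref{lem:uniqueness iid}, whence $I_0(\ttheta_0)=\I-DT(\ttheta_0)\succ0$; but the contraction, not the uniqueness of the critical point, is what does the work.
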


The proofs of Lemma \ref{lem:uniqueness iid} and Theorem \ref{thm:upper bound iid} are deferred to \cref{sec:proof iid}. 

\begin{remark}[Computing the estimator]
    In the proof of Lemma \ref{lem:uniqueness iid}, we use the fact from \cite{daskalakis2017ten} that the mapping $T(\ttheta) := \EE_{\X\sim N_d(\ttheta_0, \I)} \X \tanh(\ttheta^\top \X)$ satisfies $T(\ttheta_0) = \ttheta_0$ and 
    $$\|T^{(t)}(\ttheta) - T^{(t)}(\ttheta_0)\| \le \kappa(\ttheta)^t \|\ttheta- \ttheta_0\|, \quad \forall t \ge 1,$$
    with $\kappa(\ttheta) := \exp\left[ - \frac{\min(\ttheta^\top \ttheta, \ttheta_0^\top \ttheta)^2}{2 \ttheta^\top \ttheta} \right] \le 1.$
    Thus, taking an arbitrary initial value $\ttheta^{(0)}\in \Theta_1$ and iteratively applying $T$ would converge to $\ttheta_0$ at an geometric rate, as long as $(\ttheta^{(0)})^\top \ttheta_0 \neq 0$. Note that $T$ can also be viewed as one iteration of the population EM algorithm for the usual symmetric GMMs with independent labels (e.g. see eq (2) in \cite{daskalakis2017ten}). Based on this global convergence guarantee, one can compute $\thetaiid$ using the sample-based EM algorithm with a random initialization $\ttheta^{(0)}$, which iteratively computes
    $$\ttheta^{(t+1)} := \frac{1}{n} \sumin \X_i \tanh(\ttheta^{(t) \top} \X_i).$$
\end{remark}

\subsection{Efficient estimation under Ising model dependence}\label{sec:Curie-Weiss}
Given the $\sqrt{n}$-consistency of $\thetaiid$, we further assess its optimality in terms of its limiting variance. It turns out that such an efficiency theory depends on models assumed on the labels. We demonstrate such a theory under Ising models for the hidden labels. To that end, 
we first formally introducing Ising models for the joint distribution of $\mathbf{Z}^n$ (\cref{subsec:hidden_ising_model}), and discuss related challenges and estimation strategies. Subsequently, we separate the argument by considering two regimes for the ``temperature'' parameter $\beta$: \emph{high/critical temperature regime} with $\beta \le 1$ (\cref{subsec:high temperature cw}) and \emph{low temperature regime} with $\beta > 1$ (\cref{subsec:low temperature cw}). 

\vspace{2mm}
\noindent 
\subsubsection{Inference under Hidden Ising models}\label{subsec:hidden_ising_model}
The Ising model, originally proposed in statistical physics to explain ferromagnetism \citep{ising1924beitrag}, is defined as follows.

\begin{defi}[Ising model]\label{def:ising}
    Let $\A_n$ be a nonnegative and symmetric $n \times n$ coupling matrix with empty diagonals.
    For $\beta \ge 0$, the Ising model $\QQ_{0, \beta, \A_n}$ is a probability measure on $\{-1, 1\}^n$ for $n\geq 1$ with probability mass function
    \begin{align*}
        \QQ_{0, \beta, \A_n} (\Z^n = \bz) \propto e^{\frac{\beta}{2} \bz^{\top} \mathbf{A}_n \bz}, ~ \textnormal{for all } \mathbf{z} \in \{-1,1\}^n.
    \end{align*}
\end{defi}

Here, the coupling matrix $\A_n$ governs the dependence structure of $\Z^n$. When a network on the $n$ data points is given, $\A_n$ can be defined as its scaled adjacency matrix, so that vertices sharing an edge are more likely to have same labels.
Also, $\beta \ge 0$ is a parameter representing the magnitude of dependence, commonly referred to as the ``inverse temperature'' parameter in the statistical physics literature. In particular, for $\beta = 0$, the Ising model $\QQ_{0, \beta, \A_n}$ simply becomes the iid measure.

Throughout this section, $\beta$ and $\A_n$ are \emph{known and fixed}, so we simplify $\QQ_0 = \QQ_{0, \beta,\A_n}$ when the context is clear. Since we consider an asymptotic setting with a growing $n$, consider a sequence of $n \times n$ coupling matrices $\{\A_n\}_{n \ge 1}$. Additionally, assume that the coupling matrices are scaled in a manner such that the {maximum row sum is 1}, i.e. 
\begin{equation}\label{eq:row sum one scaling}
    \lim_{n \to \infty} \lVert \A_n \rVert_{\infty} =1.
\end{equation}
The exact assumptions on $\A_n$ vary across different results, and additional assumptions are imposed along the way. We provide a classical and well studied example below.

\begin{ex}[Curie-Weiss model]\label{ex:Curie-Weiss}
One important example is when $\A_n$ is the scaled adjacency matrix of a complete graph with $A_n(i,j) = \frac{1}{n}\mathbf{1}(i\neq j)$, which we denote as the Curie-Weiss model $\QQ_{0,\beta}^{\text{CW}}$. The Curie-Weiss model has been popular for modeling dependent binary data, due to its exchangeability and low-rank nature \citep{ellis1978statistics, comets1991asymptotics, mukherjee2018global}. 
For future convenience, we spell out the pmf of the Curie-Weiss model:
\begin{align}
    \QQ_{0,\beta}^{\text{CW}}(\Z^n = \bz) \propto e^{\frac{n \beta \bar{z}^2}{2}} ~ \mbox{for all } \bz \in \{-1, 1\}^n,\label{eq:CW pmf}
\end{align}
and let $P_{\ttheta_0, \beta}^{\text{CW}}$ be the distribution of $\X^n$ under Curie-Weiss labels $\QQ_{0,\beta}^{\text{CW}}$.
\end{ex}

As the Ising model $\QQ_0$ determines the true labels, it is crucial to understand its properties. One statistic of interest is the sample mean $\bar{Z}$, which determines the proportion of label $Z_i$'s equal to 1. 
Under certain assumptions on $\A_n$ (see Definitions \ref{defi:mean-field} and \ref{defi:regularity}), it is known that the limiting behavior of $\bar{Z}$ exhibits a phase transition as it concentrates around 0 when $\beta \le 1$, and around $\pm m$ when $\beta > 1$ \citep{ellis1978statistics, deb2023fluctuations}. Here, $m = m(\beta) > 0$ is defined as the unique positive root of $m = \tanh(\beta m)$. %
Thus, when $\beta \le 1$, the labels roughly have equal proportions. However, when $\beta > 1$, for each configuration, one label is more likely than the other (with probability $\frac{1+m}{2}$ and $\frac{1-m}{2}$, respectively). This motivates why we need to consider the two regimes separately.
\vspace{2mm}
\noindent \emph{Likelihood under Ising labels.} Our main ingredient for proving subsequent results under the Ising labels $\mathbf{Z}^n\sim  \QQ_{0, \beta, \A_n}$ is to understand the corresponding normalizing constant in \eqref{eq:MLE definition} as the normalizing constant of a ``random field Ising model''. Specifically, define $\QQ_{\ttheta} = \QQ_{\ttheta,\beta, \A_n,\X^n}$ as a measure on $\{-1, 1\}^n$ conditioned on the data $\X^n \sim P_{\ttheta_0, \QQ_{0,\beta,\A_n}}$ with pmf
\begin{equation}\label{eq:rfim def}
    \QQ_{\ttheta} (\bw) = \QQ_{\ttheta,\beta, \A_n,\X^n}(\bw) := \frac{e^{\frac{\beta}{2} \bw^\top \A_n \bw + \ttheta^\top \sumin \X_i w_i }}{Z_{n,\beta,\A_n}(\ttheta, \X^n)} ~ \text{for all} ~ \bw \in \{-1,1\}^n,
\end{equation}
where $$Z_{n,\beta,\A_n}(\ttheta, \X^n) := \sum_{\mathbf{w} \in \{-1,1\}^n} e^{\frac{\beta}{2} \mathbf{w}^{\top} \A_n \mathbf{w} + \ttheta^\top \sumin \X_i w_i}$$ is the normalizing constant/partition function. It is easy to see that $\QQ_{\ttheta}$ is the ``posterior'' distribution of the labels after observing $\X^n$ and assuming the knowledge of $\ttheta$. 
It is interesting that $\QQ_{\ttheta}$ can be viewed as a \emph{random field Ising model} (RFIM) from statistical physics, where the additional linear term $\sumin (\ttheta^\top \X_i) w_i$ (compared to the true label distribution $\QQ_{0,\beta,\A_n}$) correspond to the ``random fields''.
Note that we use the notation $\mathbf{w}/\W$ to denote realizations and samples under the RFIM $\W^n \sim \QQ_{\ttheta}$, and $\mathbf{z}/\Z$ for that under the true label distribution $\Z^n \sim \QQ_0$.
Also, note that the newly defined $\QQ_{\ttheta}$ is consistent with the previous notation $\QQ_0$ (see \cref{def:ising}) in the sense that $\QQ_{\ttheta} = \QQ_0$ for $\ttheta =\mathbf{0}_d$.

With these notations, %
the first order conditions of the minimization in \eqref{eq:MLE definition} can be written as
\begin{equation}\label{eq:MLE first order conditions}
    \thetamle = \frac{1}{n} \sumin \X_i \EE^{\QQ_{\thetamle}} (W_i : \X^n).
\end{equation}
Above by $\EE^{\QQ_{\ttheta}}$ corresponds to the expectation under the distribution $\QQ_{\ttheta} (\bw)$ introduced in \eqref{eq:rfim def} above.   Hence, to understand the asymptotics of the MLE, it is crucial to have a precise understanding of the RHS of \eqref{eq:MLE first order conditions}. In particular, we claim there exists a value $u_n(\beta, \X^n)$ such that for $\ttheta\approx \ttheta_0$,
\begin{equation}\label{eq:sum xi wi expansion intro}
    \frac{1}{n} \sumin \X_i \EE^{\QQ_{\ttheta}} (W_i : \X^n) = \frac{1}{n} \sumin \X_i \tanh(u_n(\beta, \X^n) + \ttheta^\top \X_i) + o_p \left(\frac{1}{\sqrt{n}} : \X^n \right).
\end{equation}
This expansion is the main tool for all of our results, such as deriving the LAN expansion, and constructing a tractable estimator $\hat{\ttheta}$ by approximating $\thetamle \approx \hat{\ttheta}$ in \eqref{eq:MLE first order conditions}:
$$\hat{\ttheta} = \frac{1}{n} \sumin \X_i \tanh(u_n(\beta, \X^n) + \hat{\ttheta}^\top \X_i).$$
We expand on this heuristics in the next to subsections. 

\subsubsection{High/critical temperature regime $\beta \le 1$}\label{subsec:high temperature cw}
Recalling the limiting variance of $\thetaiid$ from \cref{thm:upper bound iid}, we now argue its optimality under a large class of Ising model distributions $\QQ_{0, \beta, \A_n}$.
In this section, our main assumptions for the Ising model components are that $\beta \le 1$ (high-temperature) and that $\A_n$ satisfies the following \emph{mean-field condition}.
\begin{assume}[mean-field condition]\label{defi:mean-field}
    We say that the sequence of coupling matrices $\{\A_n\}_{n \ge 1}$ satisfies the mean-field condition when 
    \begin{equation}\label{eq:mean field assumption}
    \alpha_n := \max_{i=1}^n \sum_{j=1}^n A_n(i,j)^2 =o \left( \frac{1}{\sqrt{n \log n}} \right).
    \end{equation}
\end{assume}

\noindent Condition \eqref{eq:mean field assumption} implies that the variational approximation of the log-partition function $\log Z_{n,\beta,\A_n}$ is tight up to the leading order \citep[][also see eq. \eqref{eq:gibbs variational principle} below]{basak2017universality}, and was used in \cite{lee2024rfim,lee2025clt} to derive tight concentration and limiting distributions on RFIMs.
For illustration, let $\mathbf{G}_n \in \{0,1\}^{n \times n}$ be the adjacency matrix of an undirected simple graph on the vertex set $V_n = \{1, \ldots, n\}$, and let $d_i$ be the degree of vertex $i$. Then, by defining $ \A_n := \frac{\mathbf{G}_n}{\max_{i=1}^n d_i}$, \eqref{eq:mean field assumption} is 
equivalent to ${\max_{i=1}^n d_i} \gg \sqrt{n \log n}$.

We prove the optimality of $\thetaiid$ in three steps. First, in Lemma \ref{lem:sum x_i z_i}, we prove a uniform version of the identity \eqref{eq:sum xi wi expansion intro}, with the centering $u_n(\beta, \X^n) = 0$. Next, in Theorem \ref{thm:lower bound high temperature}, we compute the LAN expansion of the likelihood ratio. Then, in Corollary \ref{cor:high tmp}, we use the LAN expansion and Le Cam theory to argue that $\thetaiid$ is optimal among the class of regular estimators. The proofs are mainly based on the concentration results for linear statistics of RFIMs developed in \cite{lee2024rfim}, and deferred to \cref{sec:proof high tmp}.

\begin{lemma}\label{lem:sum x_i z_i}
    Suppose that $\beta <1$, $\A_n$ satisfies the mean-field condition, and $\X^n \sim P_{\ttheta_0,\QQ_{0, \beta, \A_n}}$. Then,     \begin{equation}\label{eq:uniform bound for sum x_i z_i}
        \sup_{\ttheta\in \Theta} \Bigg\lVert \EE^{\QQ_{\ttheta}} \Big[\sumin \X_i W_i \Big] - \sumin \X_i \tanh(\ttheta^\top \X_i) \Bigg\rVert = o_p\left({\sqrt{n}} \right).
    \end{equation}
    Additionally, \eqref{eq:uniform bound for sum x_i z_i} holds under the Curie-Weiss label distribution $\QQ_{0,\beta}^{\text{CW}}$ at the critical temperature $\beta = 1$. %
\end{lemma}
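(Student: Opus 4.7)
My strategy is to view $\QQ_{\ttheta}$ as a random field Ising model with external fields $\ttheta^\top \X_i$, exploit the asymptotic exactness of naive mean-field theory under condition \eqref{eq:mean field assumption}, and then extract the cancellation that replaces the mean-field prediction by the simpler $\sumin \X_i \tanh(\ttheta^\top \X_i)$. For each fixed $\ttheta$, the concentration results for linear statistics of RFIMs developed in \cite{lee2024rfim} yield
\[
\EE^{\QQ_{\ttheta}}\bigl[\sumin \X_i W_i\bigr] = \sumin \X_i \tanh\bigl(\beta (\A_n \hat{\mathbf m}(\ttheta))_i + \ttheta^\top \X_i\bigr) + o_p(\sqrt n),
\]
where $\hat{\mathbf m}(\ttheta)$ is the unique solution of the naive mean-field equation $\hat m_i = \tanh(\beta (\A_n \hat{\mathbf m})_i + \ttheta^\top \X_i)$; existence and uniqueness of $\hat{\mathbf m}(\ttheta)$ follow from the contraction of the mean-field map in $\ell^\infty$ when $\beta < 1$.

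It then suffices to show that $\sumin \X_i[\tanh(\beta (\A_n \hat{\mathbf m})_i + \ttheta^\top \X_i) - \tanh(\ttheta^\top \X_i)] = o_p(\sqrt n)$. Taylor-expanding reduces this to bounding $\beta \sumin \X_i \sech^2(\ttheta^\top \X_i) (\A_n \hat{\mathbf m}(\ttheta))_i$. Here the key cancellation is as follows: decomposing $\X_i = \ttheta_0 Z_i + \xxi_i$ with $\xxi_i \overset{\text{iid}}{\sim} N_d(\mathbf{0}_d, \I)$ independent of $\mathbf{Z}^n$, Stein's lemma gives $\EE[\X_i \sech^2(\ttheta^\top \X_i) \mid Z_i] = c_1(\ttheta)\ttheta_0 Z_i + c_2(\ttheta)\ttheta$ for explicit scalars $c_1, c_2$; meanwhile the contraction of the mean-field map shows that $\hat{\mathbf m}(\ttheta)$ is close to $\tanh(\ttheta^\top \X)$ in $\ell^\infty$, so that $\A_n \hat{\mathbf m}(\ttheta)$ is driven by $\A_n \mathbf Z$. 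Combining these, the target bilinear expression reduces to quadratic forms in $\mathbf{Z}^n$ coupled through $\A_n$ and to Gaussian-linear forms in $\xxi^n$, both of which are $o_p(\sqrt n)$ under $\beta < 1$ and the mean-field condition by the concentration estimates of \cite{basak2017universality, lee2024rfim}.

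Uniformity over $\ttheta \in \Theta$ follows from the pointwise bound combined with a covering argument: both sides of \eqref{eq:uniform bound for sum x_i z_i} are Lipschitz in $\ttheta$ on compact subsets with constants of order $\sumin \|\X_i\|^2 = O_p(n)$, so a grid of resolution $o(n^{-1/2})$ and a union bound suffice, and the resulting $\log n$ loss is absorbed by the strict $o(1/\sqrt{n\log n})$ strength of \eqref{eq:mean field assumption}. \textbf{The main obstacle} I anticipate is upgrading the pointwise mean-field approximation to a uniform bound without losing the $o_p(\sqrt n)$ rate, particularly near $\ttheta = \mathbf{0}_d$ where the external fields degenerate and the RFIM loses its contraction cushion. \textbf{For the Curie-Weiss case at the critical temperature $\beta = 1$}, where the generic RFIM concentration tools break down, I would instead use the Hubbard-Stratonovich representation $\QQ_{\ttheta}^{\text{CW}}(\bw) \propto \int e^{-nu^2/2} \prod_i e^{(u + \ttheta^\top \X_i)w_i}\,du$, which reduces $\EE^{\QQ_{\ttheta}}[W_i]$ to a posterior average of $\tanh(u + \ttheta^\top \X_i)$ with $u$ drawn from a density proportional to $\exp(-\tfrac{n u^2}{2} + \sumin \log \cosh(u + \ttheta^\top \X_i))$; for $\ttheta \ne \mathbf{0}_d$ the external fields restore strict concavity of this log-density uniformly in $n$, so Laplace's method pins the mode at $u^*(\ttheta) = O_p(1/\sqrt n)$ and the same cancellation argument closes the bound.
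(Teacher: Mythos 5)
Your overall strategy coincides with the paper's: interpret $\QQ_\ttheta$ as a random-field Ising model, replace the Gibbs mean by the naive mean-field optimizer, Taylor-expand to isolate the leading correction $\beta\sumin\X_i\sech^2(\ttheta^\top\X_i)(\A_n\hat{\mathbf m})_i$, and reduce it to quadratic forms in $\Z^n$ coupled through $\A_n$, using the anti-symmetry of $\x\mapsto\x\sech^2(\ttheta^\top\x)$ and $\x\mapsto\tanh(\ttheta^\top\x)$ together with the mean-field scaling $n\alpha_n=o(\sqrt{n/\log n})$. The paper packages the mean-field step differently (via moment bounds on the local fields $m_i(\W^n)$ and $\EE^{\QQ_\ttheta}\sum_j d_j(W_j-\tanh(c_j))$ from Lemma \ref{lem:RFIM moment bound}), but the substance is the same. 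Two side notes: your Stein identity should read $\EE[\X_i\sech^2(\ttheta^\top\X_i)\mid Z_i]=Z_i\bigl(c_1(\ttheta)\ttheta_0+c_2(\ttheta)\ttheta\bigr)$ — both summands carry the factor $Z_i$, not just the $\ttheta_0$ term — and your Hubbard--Stratonovich representation of $\QQ_\ttheta^{\mathrm{CW}}$ is exactly the Gaussian augmentation $Y_n$ the paper uses.

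Where the proposal has a genuine gap is the passage from pointwise to uniform over $\ttheta\in\Theta$. You propose a grid of resolution $o(n^{-1/2})$ plus a union bound, accepting a $\log n$ loss. But a grid of that mesh in $\mathbb R^d$ (even after truncating $\|\ttheta\|$, and $\Theta$ is not compact to begin with) has on the order of $n^{d/2}$ points, so the union bound requires the failure probability at each grid point to be $o(n^{-d/2})$. A pointwise $o_p(\sqrt n)$ estimate of the type you invoke supplies no such quantitative tail; you would need sub-Gaussian or at least polynomial-tail concentration in $\X^n$ at each $\ttheta$, which you do not establish. The paper avoids this entirely: it fixes high-probability events on $\X^n$ (the conditions C1--C5 in the proof) whose bounds are \emph{deterministic and uniform in $\ttheta$}, and proves the target estimate pathwise on those events, so no covering is needed. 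Relatedly, your claim that the HS log-density is ``strictly concave uniformly in $n$'' for $\ttheta\ne\mathbf 0_d$ does not hold uniformly over $\ttheta$: the curvature $1-\EE\,\sech^2(\ttheta^\top\X)$ vanishes as $\|\ttheta\|\to0$ (this is precisely the critical Curie--Weiss degeneracy), so ``Laplace pins the mode at $O_p(1/\sqrt n)$'' breaks down near the origin. The paper instead proves a direct second-moment bound $\EE^{\QQ_\ttheta^{\mathrm{CW}}}[\bar W^2]=O_p(1/n)$ with a constant independent of $\ttheta$, which is a sharper input than the mode-location heuristic you use.
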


\begin{remark}%
    The careful reader would have noticed that the first set of assumptions in \cref{lem:sum x_i z_i} does not allow $\beta = 1$, which is the critical temperature for Ising models on regular graphs \citep{deb2023fluctuations}. We believe that $\thetaiid$ would still be optimal at $\beta = 1$ as well, and in fact show such a result under the Curie Weiss model $\QQ_{0,\beta}^{\text{CW}}$. The main bottleneck of our proof is that we could only prove the RFIM moment bounds for $\beta < 1$. Actually, the RFIM $\QQ_{\ttheta, 1, \A_n,\X_n}$ with $\ttheta \neq \mathbf{0}_d$ is expected to exhibit a larger critical temperature $\beta_{crit}(\ttheta) := \frac{1}{\EE_{\X \sim N_d(\ttheta_0, \I)} \sech^2(\ttheta^\top X)} > 1$ \citep{he2023hidden}, which is why we expect that the moment bounds to be still true for $\beta = 1$.

    We additionally mention that \cref{lem:sum x_i z_i} holds even without the nonnegative assumption on the entries of $\A_n$ as long as $\beta < 1$ and \eqref{eq:row sum one scaling} holds.
\end{remark}

In \cref{thm:lower bound high temperature}, we assume eq. \eqref{eq:uniform bound for sum x_i z_i} and prove the LAN expansion of the likelihood (e.g. see Section 7 in \cite{van2000asymptotic}). 
Here, we do not require any specific property for the Ising label distribution beyond \eqref{eq:uniform bound for sum x_i z_i}.

\begin{thm}\label{thm:lower bound high temperature}
    Suppose \eqref{eq:uniform bound for sum x_i z_i} holds for an Ising model $\QQ_0 = \QQ_{0,\beta,\A_n}$, and $\X^n \sim P_{\ttheta_0,\QQ_{\ttheta}}$. For $\bh\in \mathbb{R}^d$, let $\ttheta_n := \ttheta_0+\frac{\bh}{\sqrt{n}}$. Then,
    $$\log \frac{dP_{\ttheta_n,\QQ_{0}}}{dP_{\ttheta_0, \QQ_{0}}} (\X^n) = \bh^\top \Delta_{n, \ttheta_0}(\X^n)- \frac{1}{2} \bh^\top I_0(\ttheta_0) \bh + o_p(1),$$
    where $I_{0}(\ttheta_0)$ is the value defined in \cref{thm:upper bound iid} and
    \begin{equation}\label{eq:delta_n}
        \Delta_{n, \ttheta_0}(\X^n) := \sqrt{n} \left(\frac{1}{n} \sumin \X_i \tanh(\ttheta_0^\top \X_i) - \ttheta_0 \right) \xrightarrow[P_{\ttheta_0, \QQ_{0}}]{d} N_d(0, I_{0}(\ttheta_0)).
    \end{equation}
    Hence, the family $\{P_{\ttheta,\QQ_{0}} \}_{\ttheta\in \Theta_1}$ is LAN with a precision matrix $I_0(\ttheta_0)$ at any $\ttheta_0 \in \Theta_1$.
\end{thm}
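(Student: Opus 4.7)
The plan is to expand the log-likelihood ratio along the one-parameter path $\ttheta_0+t\bh/\sqrt{n}$, $t\in[0,1]$, and use \cref{lem:sum x_i z_i} to control the RFIM expectation that appears in the integrand. Because $Z_i^2=1$ and $\X_i\mid\Z^n\overset{\mathrm{ind}}{\sim}N_d(\ttheta Z_i,\I)$, the marginal density factors as
\[
p_\ttheta(\X^n)=\phi_0(\X^n)\, e^{-n\|\ttheta\|^2/2}\cdot\frac{Z_{n,\beta,\A_n}(\ttheta,\X^n)}{Z_{n,\beta,\A_n}(\mathbf{0}_d,\X^n)},
\]
where $\phi_0$ is a standard-Gaussian factor free of $\ttheta$. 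Hence the log-likelihood ratio splits into a deterministic piece $-\tfrac{n}{2}(\|\ttheta_n\|^2-\|\ttheta_0\|^2)=-\sqrt{n}\,\bh^\top\ttheta_0-\tfrac{1}{2}\|\bh\|^2$ and a random piece $\log Z_n(\ttheta_n,\X^n)-\log Z_n(\ttheta_0,\X^n)$.

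For the random piece, the fundamental theorem of calculus combined with the identity $\nabla_\ttheta\log Z_n(\ttheta,\X^n)=\EE^{\QQ_\ttheta}[\sumin \X_i W_i]$ gives
\[
\log Z_n(\ttheta_n,\X^n)-\log Z_n(\ttheta_0,\X^n)=\int_0^1 \Big[\EE^{\QQ_{\ttheta_0+t\bh/\sqrt{n}}}\!\!\sumin \X_i W_i\Big]^{\!\top}\!\frac{\bh}{\sqrt{n}}\,dt.
\]
I would now apply \cref{lem:sum x_i z_i} uniformly along the path to replace the RFIM expectation by $\sumin \X_i\tanh(\ttheta^\top\X_i)$ at the cost of $o_p(\sqrt{n})$; after the $\bh/\sqrt{n}$ factor and integration in $t$, this error is $o_p(1)$. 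I would then Taylor expand $\tanh((\ttheta_0+t\bh/\sqrt{n})^\top\X_i)$ about $\ttheta_0^\top\X_i$ to second order. The first-order contribution integrates to $\bh^\top\cdot\tfrac{1}{\sqrt{n}}\sumin\X_i\tanh(\ttheta_0^\top\X_i)=\bh^\top\Delta_{n,\ttheta_0}(\X^n)+\sqrt{n}\,\bh^\top\ttheta_0$, whose drift $\sqrt{n}\,\bh^\top\ttheta_0$ cancels the deterministic $-\sqrt{n}\,\bh^\top\ttheta_0$. The second-order contribution is $\tfrac{1}{2n}\sumin(\bh^\top\X_i)^2\sech^2(\ttheta_0^\top\X_i)$; since $(\bh^\top\X)^2\sech^2(\ttheta_0^\top\X)$ is invariant under $\X\to-\X$, its conditional expectation given $Z_i$ does not depend on $Z_i$, so a conditional law of large numbers given $\Z^n$ yields the in-probability limit $\tfrac{1}{2}\bh^\top\EE_{\X\sim N_d(\ttheta_0,\I)}[\X\X^\top\sech^2(\ttheta_0^\top\X)]\bh=\tfrac{1}{2}\bh^\top(\I-I_0(\ttheta_0))\bh$. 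Combined with $-\tfrac{1}{2}\|\bh\|^2$, this produces the claimed curvature $-\tfrac{1}{2}\bh^\top I_0(\ttheta_0)\bh$. The third-order Taylor remainder is $o_p(1)$ by the boundedness of $\tanh'''$ and Gaussian moments of $\X_i$.

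It remains to prove $\Delta_{n,\ttheta_0}(\X^n)\xd N_d(0,I_0(\ttheta_0))$. Since $\EE[\X_i\tanh(\ttheta_0^\top\X_i)\mid Z_i]=\ttheta_0$ by the score identity in \cref{lem:uniqueness iid}, one can rewrite $\Delta_{n,\ttheta_0}(\X^n)=\tfrac{1}{\sqrt{n}}\sumin(\X_i\tanh(\ttheta_0^\top\X_i)-\ttheta_0)$. Conditional on $\Z^n$ the summands are independent with label-independent mean and covariance $\Sigma$ (again by invariance of $\X\tanh(\ttheta_0^\top\X)$ under $\X\to-\X$), so the Lindeberg--Feller CLT yields $\Delta_{n,\ttheta_0}\mid\Z^n\xd N_d(0,\Sigma)$; a non-random limit implies the same unconditional convergence. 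A short calculation using $\tanh^2+\sech^2\equiv 1$ and $\EE[\X\X^\top]=\I+\ttheta_0\ttheta_0^\top$ identifies $\Sigma=I_0(\ttheta_0)$. The one genuine obstacle, controlling the RFIM magnetization uniformly in $\ttheta$, is already resolved by \cref{lem:sum x_i z_i}; the remainder is a careful second-order expansion paired with a conditional CLT exploiting the sign symmetry of the Gaussian mixture.
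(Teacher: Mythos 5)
Your proposal is correct and follows essentially the same route as the paper: split the log-likelihood ratio into the deterministic $-\tfrac{n}{2}(\|\ttheta_n\|^2-\|\ttheta_0\|^2)$ piece and the log-partition-function difference, use the exponential-family score identity $\nabla_\ttheta \log Z_n = \EE^{\QQ_\ttheta}[\sum_i \X_i W_i]$, invoke the uniform control of \cref{lem:sum x_i z_i} along the path, Taylor expand to second order, and conclude with a conditional-LLN for the curvature and a conditional CLT (via sign symmetry of the mixture) for $\Delta_{n,\ttheta_0}$. The only cosmetic difference is that you Taylor-expand $\tanh$ inside the line integral and then integrate in $t$, while the paper first closes the integral to $\sum_i \log\big(\cosh(\ttheta_n^\top\X_i)/\cosh(\ttheta_0^\top\X_i)\big)$ and then Taylor-expands $\log\cosh$; both yield the same first- and second-order terms (including the factor $\int_0^1 t\,dt=\tfrac12$).
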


In the next corollary, we combine all previous results and prove that $\thetaiid$ is a regular estimator.
Then, by the convolution theorem (e.g. see Theorem 8.8 in \cite{van2000asymptotic}), $\thetaiid$ is optimal amongst all regular estimators in the sense that for other regular estimators with limiting variance $\Sigma_n$, we must have $\Sigma_n \succeq I_0(\ttheta_0)^{-1}$. Thus, $\thetaiid$ is optimal under Ising model labels that satisfy the assumptions in \cref{lem:sum x_i z_i}. In particular, $\thetaiid$ is optimal under Curie-Weiss labels $\QQ_{0,\beta}^{\text{CW}}$ with $\beta \le 1$, as illustrated by the straight line in \cref{fig:info_wrt_beta}.

\begin{cor}\label{cor:high tmp}
    Suppose \eqref{eq:uniform bound for sum x_i z_i} holds for some Ising model $\QQ_0$, and $\X^n \sim P_{\ttheta_0,\QQ_{0}}$.Then, $\thetaiid$ is a regular estimator, i.e. for any $\bh \in \mathbb{R}^d - \{0\}$ and $\ttheta_n := \ttheta_0 + \frac{\bh}{\sqrt{n}}$, we have $$\sqrt{n}(\thetaiid - \ttheta_n) \xrightarrow[P_{\ttheta_n,\QQ_{0}}]{d} N_d(0, I_0(\ttheta_0)^{-1}).$$
\end{cor}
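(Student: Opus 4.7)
The plan is to combine the linearization from \cref{thm:upper bound iid} with the LAN expansion from \cref{thm:lower bound high temperature} via Le Cam's third lemma. The only new work beyond what those two results already supply is the transport of the limiting distribution from the null $P_{\ttheta_0, \QQ_0}$ to the local alternative $P_{\ttheta_n, \QQ_0}$.

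First, I would observe that \cref{thm:upper bound iid} (which holds for \emph{arbitrary} $\QQ_0$) gives the linearization
$$\sqrt{n}(\thetaiid - \ttheta_0) = I_0(\ttheta_0)^{-1} \Delta_{n,\ttheta_0}(\X^n) + o_{P_{\ttheta_0,\QQ_0}}(1).$$
The LAN expansion in \cref{thm:lower bound high temperature} implies, by Le Cam's first lemma, that $P_{\ttheta_n,\QQ_0}$ and $P_{\ttheta_0,\QQ_0}$ are mutually contiguous. Contiguity upgrades the above $o_p(1)$ term under $P_{\ttheta_0,\QQ_0}$ to an $o_p(1)$ term under $P_{\ttheta_n,\QQ_0}$, so the same linearization is valid along the local alternative.

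Next, I would identify the joint limit of $(\Delta_{n,\ttheta_0}(\X^n), \log dP_{\ttheta_n,\QQ_0}/dP_{\ttheta_0,\QQ_0})$ under $P_{\ttheta_0,\QQ_0}$. The LAN statement writes the log-likelihood ratio as $\bh^\top \Delta_{n,\ttheta_0}(\X^n) - \tfrac12 \bh^\top I_0(\ttheta_0)\bh + o_p(1)$, and \eqref{eq:delta_n} gives $\Delta_{n,\ttheta_0}(\X^n) \xd N_d(0, I_0(\ttheta_0))$. Together, by the continuous mapping theorem, the pair converges jointly to a bivariate (actually $(d+1)$-variate) Gaussian whose covariance structure has $\Cov(\Delta, \log\text{LR}) = I_0(\ttheta_0)\bh$ in the limit. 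Le Cam's third lemma (see e.g.\ Theorem 6.6 of \cite{van2000asymptotic}) then yields
$$\Delta_{n,\ttheta_0}(\X^n) \xrightarrow[P_{\ttheta_n,\QQ_0}]{d} N_d\bigl(I_0(\ttheta_0)\bh,\, I_0(\ttheta_0)\bigr).$$

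Finally, combining this shifted limit with the linearization and subtracting $\bh$ for the re-centering $\sqrt{n}(\thetaiid-\ttheta_n) = \sqrt{n}(\thetaiid - \ttheta_0) - \bh$ gives
$$\sqrt{n}(\thetaiid - \ttheta_n) = I_0(\ttheta_0)^{-1} \Delta_{n,\ttheta_0}(\X^n) - \bh + o_{P_{\ttheta_n,\QQ_0}}(1) \xrightarrow[P_{\ttheta_n,\QQ_0}]{d} N_d(0, I_0(\ttheta_0)^{-1}),$$
which is exactly regularity. I do not expect any serious obstacle here: the two main inputs (linearization and LAN) have already been established, and everything else is a routine application of contiguity together with Le Cam's third lemma. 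The only point that requires a little care is checking that the joint convergence of $(\Delta_{n,\ttheta_0}, \log\text{LR})$ truly is jointly Gaussian with the claimed covariance — but this follows immediately by writing $\log\text{LR}$ as a linear function of $\Delta_{n,\ttheta_0}$ (up to a deterministic shift and a negligible remainder) via the LAN expansion.
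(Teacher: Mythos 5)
Your proposal is correct and follows essentially the same route as the paper: linearization from \cref{thm:upper bound iid}, LAN from \cref{thm:lower bound high temperature}, contiguity via Le Cam's first lemma, joint Gaussian limit, and then Le Cam's third lemma followed by recentering. The only cosmetic difference is that you apply Le Cam's third lemma to $\Delta_{n,\ttheta_0}$ itself and then push through the deterministic map $I_0(\ttheta_0)^{-1}$, whereas the paper applies it directly to the pair $\bigl(\sqrt{n}(\thetaiid-\ttheta_0),\log\mathrm{LR}\bigr)$; these are equivalent given the linearization.
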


Even though the main focus of this paper is on estimating $\ttheta_0$, the LAN expansion in Theorem \ref{thm:lower bound high temperature} can also be applied for testing.
\begin{remark}[Testing against contiguous alternatives]\label{rmk:testing high temperature}
    For $\ttheta_0 \in \Theta_1$, consider testing $H_0: \ttheta= \ttheta_0$ v.s. $H_1: \ttheta= \ttheta_0 + \frac{\bh}{\sqrt{n}}$ for any $\bh \neq \mathbf{0}$. Using the LAN expansion in Theorem \ref{thm:lower bound high temperature}, we can construct an asymptotically optimal test by rejecting the null when $\bh^\top \Delta_{n, \ttheta_0}(\X^n)$ is large. Note that we are considering $\ttheta_0 \in \Theta_1$ and do not allow $\ttheta_0 = \mathbf{0}_d$, which corresponds to testing the number of mixture components. Similar to the iid case \citep{goffinet1992testing}, we believe that the likelihood would not be LAN at $\ttheta_0 = \mathbf{0}_d$.
\end{remark}

We conclude this subsection with a discussion on  
the mean-field assumption \eqref{eq:mean field assumption}. We believe that the universal optimality of $\thetaiid$ heavily depends on 
the mean-field assumption \eqref{eq:mean field assumption}. For non-mean-field models that do not satisfy \eqref{eq:mean field assumption}, for example when $ \A_n$ is the adjacency matrix of a lattice, one would need an alternative approximation of the log normalizing constant in order to derive a result similar to \cref{lem:sum x_i z_i}. This itself is an open research question and the current results require restrictive assumptions on the boundary conditions of the lattice \citep{chatterjee2019central}. We provide a simple counterexample below and show that the university may fail when $ \A_n$ does not satisfy \eqref{eq:mean field assumption}.

\begin{ex}[Counterexample of the mean-field condition]\label{rem:counterex}
    Consider the case when $ \A_n$ is the scaled adjacency matrix of the graph with edges $\{1 \to 2, 3\to 4, \ldots , (2k-1) \to 2k, \ldots \}$. %
    Then, we have $\an = \Theta(1)$, so \eqref{eq:mean field assumption} does not hold. For this case, the pairs $(\X_{2k-1}, \X_{2k})$ are i.i.d and it is possible to directly compute the Fisher information for estimating $\ttheta_0$. In \cref{fig:counterexample}, we display the limiting variance of the MLE $\hat{\ttheta}_n^{\text{MLE}}$ and $\thetaiid$. We see that for all $\beta>0$, the MLE has a smaller variance, and $\thetaiid$ fails to be optimal. Note that this model does not have a phase transition in terms of $\beta$, and the low temperature regime does not exist.
\end{ex}

\begin{figure}[h!]
    \centering
    \includegraphics[width = 0.9\textwidth]{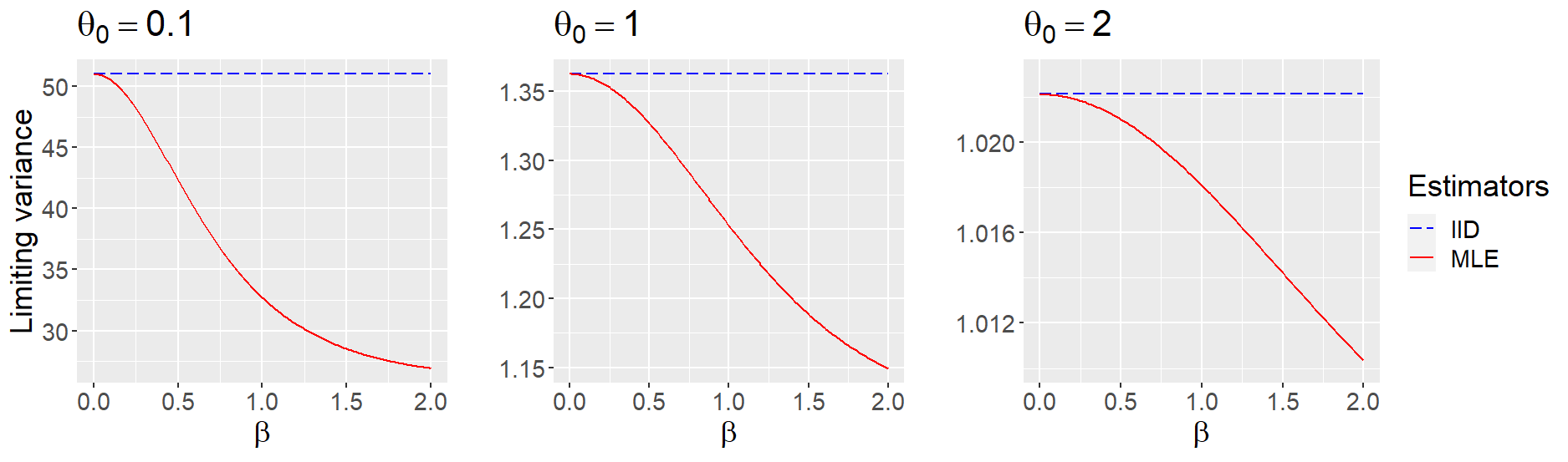}
    \caption{Scaled limiting variance of the estimators; ``IID'' denotes $\thetaiid$ and ``MLE'' denotes $\hat{\ttheta}_n^{\text{MLE}}$. For all $\beta>0$ and $\ttheta_0$, $\hat{\ttheta}_n^{\text{MLE}}$ is always more efficient compared to $\thetaiid$.}
    \label{fig:counterexample}
\end{figure}

\subsubsection{Low temperature regime: $\beta > 1$}\label{subsec:low temperature cw}
Now, we consider the low temperature regime $\beta > 1$, where the true labels are still generated from the Ising model $\QQ_{0, \beta, \A_n}$. The low-temperature regime is typically more challenging than the high-temperature case and many results depend on specific structures of the coupling matrix $\A_n$ \citep{friedli2017statistical, bhattacharya2021sharp, deb2023fluctuations}. In particular, the critical temperature (and consequently, the definition of the low temperature regime) depends on the sequence of graphs $\{\A_n\}_{n \ge 1}$ as we have seen in \cref{rem:counterex}. To make $\beta >1 $ be the bona fide low-temperature regime, we assume that $\A_n$ is an \emph{approximately regular} matrix and is \emph{well-connected} in addition to the mean-field condition \eqref{eq:mean field assumption}. These conditions are motivated by \cite{deb2023fluctuations}, which establish universal phase transitions at $\beta = 1$ for such $\A_n$.
One can immediately check that the Curie-Weiss model satisfies these conditions. Other possible choices of $\A_n$ include the Erdős–Rényi random graph and the balanced stochastic block model; see Section 1.3 in \cite{deb2023fluctuations} for additional examples.

\begin{assume}[approximately regular / well-connected]\label{defi:regularity}
    We say that a sequence of coupling matrices $\{\A_n\}_{n \ge 1}$ is \emph{approximate regular} when the row sums $R_i := \sumjn A_n(i,j)$ satisfy
    \begin{equation*}
        \sumin (R_i - 1)^2 = o(\sqrt{n}), ~\sumin (R_i - 1) = o(\sqrt{n}).
    \end{equation*}
    Also, we say that an approximate regular sequence $\{\A_n\}_{n \ge 1}$ is \emph{well-connected} when its two largest eigenvalue $\lambda_1(\A_n) \ge \lambda_2(\A_n)$ satisfies $\limsup_{n \to \infty} \frac{\lambda_2(\A_n)}{\lambda_1(\A_n)} < 1.$ Note that for approximately regular graphs that satisfy \eqref{eq:row sum one scaling}, we have $\lambda_1(\A_n) \to 1$.
\end{assume}

When $\beta >1$ and $\{\A_n\}_{n \ge 1}$ is approximately regular and well-connected, the estimator $\thetaiid$ turns out to be suboptimal. Hence, we need to find an alternative estimator with an optimal variance and also compute the LAN expansion of the likelihood. We divide this subsection into two parts, and consider the upper bound (constructing an estimator) and lower bound (LAN expansion) separately. The argument is more technical than the high/critical temperature regime due to the \emph{asymmetric proportion} of the labels, and we first introduce a common notation that will be used throughout \cref{subsec:low temperature cw}.
For the same technical reason, we present some results conditioned on the event $\bar{\X} \in \Theta_1$. 

\begin{defi}\label{defi:EE ttheta}
    Fix $\beta > 1$ and recall that $m = m(\beta)$ is the unique positive root of $m = \tanh(\beta m)$. For $\ttheta_0 \in \Theta_1$, let $\PP_{\ttheta_0}$ denote the weighted mixture of two symmetric Normals: 
    $$\PP_{\ttheta_0} := \frac{1+m}{2} N_d(\ttheta_0, \I) + \frac{1-m}{2} N_d(-\ttheta_0, \I).$$ Also, let $\EE_{\ttheta_0}$ be the expectation under the distribution $\PP_{\ttheta_0}$. 
\end{defi}

\vspace{1mm}
\paragraph{Upper bound}
We define the estimator $\thetamf$ by setting
\begin{equation}\label{eq:mean-field estimator definition}
    (\hat{U}_n, \thetamf) := \argmin_{(u, \ttheta) \in [-1,1] \times \Theta_1} M_n(u, \ttheta),
\end{equation}
where $M_n: [-1,1] \times \Theta \to \mathbb{R}$
is %
\begin{align*}%
    M_n(u, \ttheta) := \frac{\ttheta^{\top} \ttheta}{2} + \frac{\beta u^2}{2} - \frac{1}{n} \sumin \log \cosh(\beta u + \ttheta^\top \X_i).
\end{align*}
Here, $\hat{U}_n$ is a nuisance quantity that serves as a proxy for the posterior mean $\EE^{\QQCW} \bar{W}$.

\vspace{2mm}

\noindent \emph{Deriving the estimator $\thetamf$.} The function $M_n$ arises from the following \emph{mean-field approximation} of the log-likelihood. 
For simplicity, let us assume Curie-Weiss labels and recall that the true log-likelihood is proportional to 
\begin{align}\label{eq:loglik}
    l_n(\ttheta) = - \frac{\ttheta^{\top} \ttheta}{2}  + \frac{1}{n} \log Z_{n,\beta}^{\text{CW}}(\ttheta, \X^n).
\end{align}
The mean-field approximation for the log-partition function $\log{Z_{n,\beta}^{\text{CW}}(\ttheta, \X^n)}$ (see Example 5.2 in \cite{wainwright2008graphical} or eq. (2.4) in \cite{lee2024rfim}) can be written as 
\begin{equation}\label{eq:gibbs variational principle}
\begin{aligned}
    \frac{1}{n} \log{Z_{n,\beta}^{\text{CW}}(\ttheta, \X^n)} 
    &\approx \sup_{\bu \in [-1,1]^n} \left(\frac{\beta \bar{u}^2}{2} + \ttheta^\top \frac{1}{n} \sumin \X_i u_i- \frac{1}{n} \sumin H(u_i) \right),
\end{aligned}
\end{equation}
where the function $H:[-1,1]\to \mathbb{R}$ is the binary entropy, defined as
$$H(u) := \mathrm{KL}\left(\text{Rad}\big(\frac{1+u}{2}\big) \|  \text{Rad}\big(\frac{1}{2}\big)\right) = \frac{1+u}{2} \log \frac{1+u}{2} + \frac{1-u}{2} \log \frac{1-u}{2}.$$
By observing the first order conditions in \eqref{eq:gibbs variational principle}, the supremum is attained at $\hat{u}_i$s that satisfy the following fixed point equations:
\begin{align}\label{eq:fixed point}
    \hat{u}_i = \tanh(\beta \bar{\hat{u}} + \ttheta^\top \X_i), ~ \mbox{for all } i.
\end{align}
By plugging this expression of the optimizers $\hat{u}_i$ into \eqref{eq:gibbs variational principle} and \eqref{eq:loglik}, for each $\ttheta$, we have
\begin{align}\label{eq:loglik mf approx}
    l_n(\ttheta) \approx - \frac{\ttheta^\top \ttheta+ \beta \bar{\hat{u}}^2}{2} + \frac{1}{n} \sumin \log \cosh(\beta \bar{\hat{u}} + \ttheta^\top \X_i).
\end{align}
Note that the value of $\bar{\hat{u}}$ implicitly depends on the variable $\ttheta$ and it is still hard to directly maximize the RHS of \eqref{eq:loglik mf approx}. Hence, we instead view the RHS as a bivariate function of $\bar{\hat{u}}$ and $\ttheta$, which is exactly $-M_n(\bar{\hat{u}}, \ttheta)$, and maximize over both arguments. Now, the resulting M-estimator is $\thetamf$, defined in \eqref{eq:mean-field estimator definition}.

{The exact form of the optimizers $\bu$ in \eqref{eq:fixed point} requires the Curie-Weiss model. However, one can understand \eqref{eq:fixed point} as an \emph{amortization} that assumes a one-dimensional common structure for each variational parameter $u_i$. Using the language of variational inference, one can understand the RHS of \eqref{eq:gibbs variational principle} as the evidence lower bound (ELBO), and the RHS of \eqref{eq:loglik mf approx} as the amortized ELBO \citep{blei2017variational,ganguly2023amortized}. In the following paragraph, we show the robustness of amortization even when $\A_n$ deviates from the complete graph, as long as it is regular, well-connected, and mean-field.}

\vspace{2mm}
\noindent \emph{Limiting distribution of $\thetamf$.} Now, we claim that the estimator $\thetamf$ is asymptotically normal when $\A_n$ is approximately regular, well-connected, and mean-field. First, to show the consistency of $\thetamf$, we have to understand the limit of the function $M_n$. To this extent, for $|u| \le 1$ and $ \ttheta\in \Theta_1$, we define %
$$M_{\infty}(u,\ttheta) := \frac{\ttheta^{\top} \ttheta}{2} + \frac{\beta u^2}{2} - \EE_{\ttheta_0} \log \cosh (\beta u + \ttheta^{\top} \X).$$
The following Lemma shows that $M_n$ converges pointwise to $M_{\infty}$.
Recall that $\cdot:(\bar{\X} \in \Theta_1)$ denotes conditioning on the event $\bar{\X} \in \Theta_1$. Also, note that both functions $M_n$ and $M_{\infty}$ depend on the known parameter $\beta$, which we omit for notational convenience.

\begin{lemma}\label{lem:conditional limit }
Suppose $\beta > 1$, $\A_n$ satisfy Assumptions \ref{defi:mean-field}, \ref{defi:regularity}, and let $\X^n \sim P_{\ttheta_0, \QQ_{0}}$. Then, for any $|u| \le 1$ and $\ttheta\in \Theta$, $M_n(u, \ttheta) : (\bar{\X} \in \Theta_1) \xp M_\infty(u,\ttheta).$
\end{lemma}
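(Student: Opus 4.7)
The plan is to observe that since the quadratic terms $\tfrac{\ttheta^\top\ttheta}{2}+\tfrac{\beta u^2}{2}$ are deterministic, it suffices to prove
\[
S_n(u,\ttheta) := \frac{1}{n}\sum_{i=1}^n g(\X_i)\ :\ (\bar{\X}\in\Theta_1)\ \xp\ \EE_{\ttheta_0}\,g(\X),
\]
where $g(\x):=\log\cosh(\beta u+\ttheta^\top \x)$. Conditional on $\Z^n$, the $\X_i$ are independent with $\X_i\sim N_d(Z_i\ttheta_0,\I)$, and since $g$ is $\|\ttheta\|$-Lipschitz in $\x$, the conditional variance $\Var(g(\X_i)\mid Z_i)$ is uniformly bounded. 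Writing $a_\pm(u,\ttheta):=\EE_{X\sim N_d(\pm\ttheta_0,\I)}g(X)$, a conditional Chebyshev bound on $\tfrac{1}{n}\sum_i\bigl(g(\X_i)-a_{Z_i}(u,\ttheta)\bigr)$ is $O(1/n)$ uniformly in $\Z^n$, which yields
\[
S_n(u,\ttheta) - \frac{1+\bar{Z}}{2}\,a_+(u,\ttheta) - \frac{1-\bar{Z}}{2}\,a_-(u,\ttheta) \ \xp\ 0.
\]
Since $\EE_{\ttheta_0}g(\X) = \tfrac{1+m}{2}a_+ + \tfrac{1-m}{2}a_-$ by Definition~\ref{defi:EE ttheta}, the proof reduces to showing $\bar{Z}:(\bar{\X}\in\Theta_1)\xp m$.

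For this step, the low-temperature phase transition of \cite{deb2023fluctuations}, which applies under Assumptions~\ref{defi:mean-field}--\ref{defi:regularity} with $\beta>1$, gives $\bar{Z}^2\xp m^2$ unconditionally. The symmetry $\QQ_0(\bz)=\QQ_0(-\bz)$ implies $\PP(\bar{Z}>0)\to 1/2$ and $\PP(\bar{Z}<0)\to 1/2$, with $\bar{Z}\xp +m$ on $\{\bar{Z}>0\}$ and $\bar{Z}\xp -m$ on $\{\bar{Z}<0\}$. On the other hand, conditional on $\Z^n$, $\bar{\X}\sim N_d(\bar{Z}\ttheta_0,\I/n)$, so $\bar{\X}-\bar{Z}\ttheta_0\xp\mathbf{0}_d$. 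Therefore on $\{\bar{Z}>0\}$, $\bar{\X}$ is within $o_p(1)$ of $m\ttheta_0\in\mathrm{int}(\Theta_1)$, giving $\PP(\bar{\X}\in\Theta_1\mid \bar{Z}>0)\to 1$, while on $\{\bar{Z}<0\}$, $\bar{\X}$ lies near $-m\ttheta_0\in\mathrm{int}(\Theta_2)$, giving $\PP(\bar{\X}\in\Theta_1\mid \bar{Z}<0)\to 0$. Combining, $\PP(\bar{\X}\in\Theta_1)\to 1/2$ and $\PP(\bar{Z}>0,\bar{\X}\in\Theta_1)\to 1/2$, so $\PP(\bar{Z}>0\mid \bar{\X}\in\Theta_1)\to 1$, and $\bar{Z}:(\bar{\X}\in\Theta_1)\xp m$ follows.

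The two displays combine through the continuity of $z\mapsto \tfrac{1+z}{2}a_++\tfrac{1-z}{2}a_-$, yielding the claim. The main obstacle is the second paragraph: even though the unconditional Ising model is symmetric and places equal mass on the phases $\bar{Z}\approx\pm m$, the conditioning on $\bar{\X}\in\Theta_1$ is what breaks this symmetry and singles out the ``$+m$'' phase. This uses crucially that $\ttheta_0\neq\mathbf{0}_d$ together with the fact that $\Theta_1$ and $\Theta_2$ are complementary half-spaces, so that $m\ttheta_0$ and $-m\ttheta_0$ lie unambiguously on opposite sides of the boundary.
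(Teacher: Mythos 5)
Your proposal is correct and takes essentially the same route as the paper: the paper deduces this lemma as an immediate corollary of the conditional ULLN (Lemma \ref{lem:ULLN}), whose proof uses exactly your decomposition $\frac{1}{n}\sum_i g(\X_i) = \frac{1}{n}\sum_i (g(\X_i)-\EE[g(\X_i)\mid Z_i]) + \frac{1+\bar{Z}}{2}a_+ + \frac{1-\bar{Z}}{2}a_-$, with a conditional concentration bound given $\Z^n$ for the first term and the $\beta>1$ phase transition from \cite{deb2023fluctuations} to control $\bar{Z}$. Your bridge from conditioning on $\{\bar{\X}\in\Theta_1\}$ to conditioning on $\{\bar{Z}>0\}$ (via concentration of $\bar{\X}$ around $\bar{Z}\ttheta_0$) is the same argument the paper makes in the step $\PP(\bar{Z}<0:\bar{\X}\in\Theta_1)\to 0$, and is also what underlies the paper's Lemma \ref{lem:X > 0, Z > 0}.
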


In the next Lemma, we show that $M_{\infty}$ is minimized at $(u, \ttheta) = (m, \ttheta_0)$. This result justifies the consistency of $\thetamf$, and provides insights for computation. Due to limited of space, we postpone all low temperature regime proofs to the Supplementary Material.

\begin{lemma}\label{lem:uniqueness of minimizer} 
For any $\beta > 1$, $M_{\infty}:[-1,1]\times \Theta_1 \to \mathbb{R}$ is uniquely minimized at $(u, \ttheta) = (m, \ttheta_0)$.
\end{lemma}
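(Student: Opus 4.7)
My approach is to rewrite $M_\infty$ as a sum of a relative entropy depending on $(u, \ttheta)$ and a one-dimensional function of $u$ alone, which will make the unique minimizer transparent. For $u \in [-1, 1]$ and $\ttheta \in \Theta_1$, let $\tilde m := \tanh(\beta u)$ and introduce the two-component isotropic Gaussian mixture density
$$q_{u,\ttheta}(\bx) := \frac{1+\tilde m}{2}\phi_d(\bx - \ttheta) + \frac{1-\tilde m}{2}\phi_d(\bx + \ttheta),$$
where $\phi_d$ denotes the standard $d$-dimensional Gaussian density. Using the identity $\frac{1\pm\tanh(\beta u)}{2} = \frac{e^{\pm\beta u}}{2\cosh(\beta u)}$, a direct algebraic manipulation will give $q_{u,\ttheta}(\bx) = \phi_d(\bx)\, e^{-\|\ttheta\|^2/2}\, \cosh(\beta u + \ttheta^\top \bx)/\cosh(\beta u)$. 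Taking logarithms, integrating under $\PP_{\ttheta_0}$, and substituting into the definition of $M_\infty$ should then yield the key identity
$$M_\infty(u, \ttheta) = \mathrm{KL}\bigl(\PP_{\ttheta_0}\,\big\|\,q_{u,\ttheta}\bigr) + \psi(u) + \kappa, \qquad \psi(u) := \frac{\beta u^2}{2} - \log\cosh(\beta u),$$
where $\kappa = \EE_{\ttheta_0}[\log \phi_d(\X) - \log\PP_{\ttheta_0}(\X)]$ is a constant independent of $(u, \ttheta)$.

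With this decomposition in hand, I would analyze the two non-constant pieces separately. Gibbs' inequality makes the KL term nonnegative, with equality iff $q_{u, \ttheta} = \PP_{\ttheta_0}$ as measures. By identifiability of two-component Gaussian mixtures with distinct means, this can happen only when either $(\ttheta, \tilde m) = (\ttheta_0, m)$ or $(\ttheta, \tilde m) = (-\ttheta_0, -m)$; the second case is excluded by the half-space constraint $\ttheta \in \Theta_1$ (since $-\ttheta_0 \notin \Theta_1$), and because $\tanh$ is strictly increasing, only $\ttheta = \ttheta_0$ together with $u = m$ remains as a zero-KL configuration.

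Next, I would show that $\psi$ is uniquely minimized on $[-1, 1]$ at $u = \pm m$. Computing $\psi'(u) = \beta\bigl(u - \tanh(\beta u)\bigr)$, the critical points on $[-1,1]$ are exactly $\{0, \pm m\}$ by definition of $m$. Since $\psi''(u) = \beta - \beta^2 \sech^2(\beta u)$ gives $\psi''(0) = \beta(1-\beta) < 0$ (so $0$ is a local maximum), and the stability inequality $\beta(1 - m^2) < 1$ at the positive fixed point of $u\mapsto \tanh(\beta u)$ (valid for $\beta > 1$) gives $\psi''(\pm m) > 0$, this combined with the evenness of $\psi$ and with monotonicity on $(m, 1]$ will locate the global minimum on $[-1,1]$ uniquely at $u = \pm m$.

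Finally, combining the two ingredients, for every $(u, \ttheta) \in [-1,1] \times \Theta_1$ we obtain $M_\infty(u, \ttheta) \geq \psi(u) + \kappa \geq \psi(m) + \kappa = M_\infty(m, \ttheta_0)$, with equality throughout only if both $u \in \{\pm m\}$ and the KL vanishes. Since the constraint $\ttheta \in \Theta_1$ rules out the $u = -m$ branch, the unique minimizer is $(m, \ttheta_0)$. The main conceptual step is discovering the KL representation; once that is in place, the remainder is a direct appeal to Gaussian mixture identifiability and a short one-variable calculus argument.
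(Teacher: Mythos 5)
Your proposal is correct and follows essentially the same route as the paper: the paper introduces the same two-component mixture $\PP_{u,\ttheta}$ (identical to your $q_{u,\ttheta}$ after rewriting $\tfrac{1\pm\tanh(\beta u)}{2}=\tfrac{e^{\pm\beta u}}{2\cosh\beta u}$), expands $M_\infty(u,\ttheta)-M_\infty(m,\ttheta_0)=\mathrm{KL}(\PP_{m,\ttheta_0}\|\PP_{u,\ttheta})-g(u)+g(m)$ with $g=-\psi$, invokes identifiability to make the KL strict off the minimizer, and then appeals to the one-variable fact that $g$ is maximized (equivalently $\psi$ minimized) at $u=\pm m$. The only cosmetic differences are that the paper cites a textbook for the calculus of $g$ where you spell it out via $\psi''$, and the paper anchors the decomposition at the point $(m,\ttheta_0)$ rather than writing out the absolute constant $\kappa$; the substance is the same.
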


Now, we derive the limiting distribution of $\thetamf$ in Theorem \ref{thm:upper bound}.
To state its variance, we need the following definitions.

\begin{defi}\label{defi:info}
    Define a $(d+1) \times (d+1)$ matrix $\Gamma = \begin{pmatrix}
    \gamma_{1,1} & \ggamma_{1,2}^{\top} \\
    \ggamma_{1,2} & \ggamma_{2,2}
\end{pmatrix}$ as the Hessian of $M_{\infty}$ at $(m, \ttheta_0)$, i.e.
\begin{align*}
\gamma_{1,1} &:= \frac{\partial^2 M_{\infty} (u, \ttheta)}{\partial u^2} \mid_{(u, \ttheta) = (m, \ttheta_0)} =  \beta - \beta^2 \EE_{\ttheta_0} \sech^2(\beta m + \ttheta_0^\top \X) \in \mathbb{R}, \\
\ggamma_{1,2} &:= \frac{\partial^2 M_{\infty} (u, \ttheta)}{\partial u \partial \ttheta} \mid_{(u, \ttheta) = (m, \ttheta_0)} = - \beta \EE_{\ttheta_0} \X \sech^2(\beta m + \ttheta_0^\top \X) \in \mathbb{R}^d, \\
\ggamma_{2,2} &:= \frac{\partial^2 M_{\infty} (u, \ttheta)}{\partial \ttheta^2} \mid_{(u, \ttheta) = (m, \ttheta_0)} = \I - \EE_{\ttheta_0} \X \X^{\top} \sech^2(\beta m + \ttheta_0^\top \X) \in \mathbb{R}^{d \times d}.
\end{align*}
For $\beta > 1$, we define a $d \times d$ matrix $I_{\beta}(\ttheta_0)$ as the Schur complement of $\gamma_{1,1}$ in $\Gamma$, i.e. $I_{\beta}(\ttheta_0) := \ggamma_{2,2} - \frac{\ggamma_{1,2} \ggamma_{1,2}^\top}{\gamma_{1,1}}$. 
\end{defi}

\begin{thm}\label{thm:upper bound}
Suppose $\beta > 1$, and that $\A_n$ satisfy Assumptions \ref{defi:mean-field} and \ref{defi:regularity}. Let $\X^n \sim P_{\ttheta_0, \QQ_{0}}$ and $\thetamf$ be the estimator defined in \eqref{eq:mean-field estimator definition}. Then,
$I_{\beta}(\ttheta_0)$ is invertible and $\thetamf$ satisfies
$$\sqrt{n}(\thetamf - \ttheta_0)  \xd N_d \left(0, {I_{\beta}(\ttheta_0)^{-1}} \right).$$
\end{thm}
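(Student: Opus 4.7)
The plan is to treat $(\hat{U}_n, \thetamf)$ as a joint M-estimator with criterion $M_n$ on $[-1,1]\times \Theta_1$, execute the standard consistency + Taylor expansion + CLT recipe, and finally extract the marginal distribution of $\thetamf$ using the Schur complement structure already built into the definition of $I_\beta(\ttheta_0)$.

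\emph{Step 1: joint consistency.} First I would show that, on the event $\{\bar{\X} \in \Theta_1\}$, $(\hat U_n, \thetamf) \xp (m, \ttheta_0)$. Lemma~\ref{lem:conditional limit } already gives pointwise convergence $M_n(u,\ttheta)\mid(\bar{\X}\in \Theta_1)\xp M_\infty(u,\ttheta)$, and Lemma~\ref{lem:uniqueness of minimizer} identifies $(m,\ttheta_0)$ as the unique minimizer of $M_\infty$. Promoting pointwise to uniform convergence on compacts (via the Lipschitz control of $\log\cosh$ in $(u,\ttheta)$ and a standard bracketing argument) combined with the coercivity of $M_\infty$ in $\ttheta$ yields the usual argmin-continuous-mapping conclusion.

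\emph{Step 2: first-order conditions and Hessian convergence.} Since the minimizer is eventually interior, $\nabla M_n(\hat U_n, \thetamf)=\mathbf{0}$. Taylor expanding around $(m,\ttheta_0)$ produces
\begin{equation*}
\mathbf{0} \;=\; \nabla M_n(m,\ttheta_0) \;+\; H_n\begin{pmatrix}\hat U_n-m\\ \thetamf-\ttheta_0\end{pmatrix},
\end{equation*}
with $H_n=\nabla^2 M_n$ evaluated on the segment between $(m,\ttheta_0)$ and $(\hat U_n,\thetamf)$. Because the Hessian entries are empirical averages of bounded smooth functions of $(\beta u+\ttheta^\top \X_i)$ and $\X_i$, the same law of large numbers argument as in \cref{lem:conditional limit } (applied to $\sech^2$ and to $\X_i\X_i^\top \sech^2$) gives $H_n \xp \Gamma$ on $\{\bar{\X}\in\Theta_1\}$, using continuity of these moments in $(u,\ttheta)$ and the consistency from Step 1. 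Positive definiteness of $\Gamma$, and hence invertibility of $I_\beta(\ttheta_0)=\ggamma_{2,2}-\gamma_{1,1}^{-1}\ggamma_{1,2}\ggamma_{1,2}^\top$, follows because $M_\infty$ has a strict isolated minimum at $(m,\ttheta_0)$ (Lemma~\ref{lem:uniqueness of minimizer}) together with $\ttheta_0\neq \mathbf{0}_d$ and strict positivity of $\sech^2$.

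\emph{Step 3: CLT for the score at $(m,\ttheta_0)$.} This is the main obstacle. Writing the centered score explicitly,
\begin{equation*}
\sqrt{n}\,\nabla M_n(m,\ttheta_0) = \sqrt{n}\begin{pmatrix}\beta m - \tfrac{\beta}{n}\sumin \tanh(\beta m+\ttheta_0^\top\X_i)\\[2pt] \ttheta_0 - \tfrac{1}{n}\sumin \X_i\tanh(\beta m+\ttheta_0^\top\X_i)\end{pmatrix},
\end{equation*}
the mean vanishes asymptotically on $\{\bar{\X}\in\Theta_1\}$ thanks to the population identities $\EE_{\ttheta_0}\tanh(\beta m+\ttheta_0^\top\X)=m$ and $\EE_{\ttheta_0}\X\tanh(\beta m+\ttheta_0^\top\X)=\ttheta_0$, which are precisely $\nabla M_\infty(m,\ttheta_0)=\mathbf{0}$. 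The obstacle is that $\X_i$ are only conditionally independent given the Ising labels $\Z^n$, and the $Z_i$ themselves are dependent. In the low-temperature regime with $\A_n$ approximately regular, well-connected, and mean-field, the distribution of $\bar Z$ concentrates on $\pm m$; conditioning on $\{\bar{\X}\in\Theta_1\}$ selects the $+m$ phase, under which (by the concentration machinery for mean-field Ising models in \cite{deb2023fluctuations} and the RFIM tools of \cite{lee2024rfim}) the empirical distribution of the $Z_i$ is close to $\mathrm{Rad}((1+m)/2)$ and the residual label dependence contributes vanishing cross-covariance. I would use this to reduce the joint statistic to an effective i.i.d.\ sum of $\X_i$ drawn from the mixture $\PP_{\ttheta_0}$ and apply the multivariate Lindeberg CLT. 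A direct moment computation (using $1-\tanh^2=\sech^2$ on the diagonal and $\EE_{\ttheta_0}\X\tanh\cdot\tanh$ on the off-diagonal) identifies the limiting covariance as exactly $\Gamma$, matching the Hessian.

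\emph{Step 4: conclusion via Schur complement.} Combining Steps 2 and 3 through Slutsky,
\begin{equation*}
\sqrt{n}\begin{pmatrix}\hat U_n-m\\ \thetamf-\ttheta_0\end{pmatrix}\;\xd\; N_{d+1}\big(\mathbf{0},\,\Gamma^{-1}\Gamma\Gamma^{-1}\big)=N_{d+1}(\mathbf{0},\Gamma^{-1}).
\end{equation*}
By the block-matrix inversion formula applied to $\Gamma=\begin{pmatrix}\gamma_{1,1}&\ggamma_{1,2}^\top\\ \ggamma_{1,2}&\ggamma_{2,2}\end{pmatrix}$, the lower-right $d\times d$ block of $\Gamma^{-1}$ is $(\ggamma_{2,2}-\gamma_{1,1}^{-1}\ggamma_{1,2}\ggamma_{1,2}^\top)^{-1}=I_\beta(\ttheta_0)^{-1}$. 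Marginalizing gives $\sqrt{n}(\thetamf-\ttheta_0)\xd N_d(\mathbf{0},I_\beta(\ttheta_0)^{-1})$, which is the claimed limit. The principal difficulty, as emphasized, lies in Step 3: replacing the hidden-Ising-dependent empirical sums by their i.i.d.\ mixture counterparts under the $+m$ phase with only $o_p(1)$ error, which is exactly where the approximate regularity, well-connectedness, and mean-field assumptions on $\A_n$ enter essentially.
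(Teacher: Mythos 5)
Your Steps 1, 2, and 4 follow the paper's proof quite closely (consistency via ULLN and uniqueness, Hessian convergence to $\Gamma$, Schur-complement extraction of the marginal), but Step 3 contains a genuine error that happens to cancel out in the final answer for accidental reasons.

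You claim that ``a direct moment computation \ldots identifies the limiting covariance as exactly $\Gamma$'' and invoke an effective reduction to i.i.d.\ draws from $\PP_{\ttheta_0}$, so that the sandwich becomes $\Gamma^{-1}\Gamma\Gamma^{-1}=\Gamma^{-1}$. This is an information-matrix equality, and it does \emph{not} hold here: $M_n$ is not the true log-likelihood but only its variational approximation, so there is no reason for the score covariance to match the Hessian, and indeed it does not. The paper's \cref{lem: CLT} gives $\sqrt{n}\nabla M_n(m,\ttheta_0): (\bar{\X}\in\Theta_1)\xd N_{d+1}(0,\Sigma)$ with $\Sigma$ (see \cref{defi:sigma}(c)) decomposing as the expected conditional variance given $Z\sim\mathrm{Rad}((1+m)/2)$ \emph{plus} a rank-one correction proportional to $C(\beta)=\frac{1-m^2}{1-\beta(1-m^2)}$, which is the limiting variance of $\sqrt{n}(\bar{Z}-m)$ under the mean-field Ising model and is strictly larger than the i.i.d.\ value $1-m^2$. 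Your assertion that ``the residual label dependence contributes vanishing cross-covariance'' is therefore false at the level of the asymptotic variance: the Ising fluctuation of $\bar{Z}$ enters $\Sigma$ at leading order, and this is precisely why $\Sigma\neq\Gamma$ (the paper emphasizes ``Note that $\Gamma^{-1}\Sigma\neq\I$ in general, and this identity is a nontrivial result'').

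As a result, the correct joint limit is $N_{d+1}(0,\Gamma^{-1}\Sigma\Gamma^{-1})$, not $N_{d+1}(0,\Gamma^{-1})$, and the heart of the argument is the algebraic identity (Lemma~\ref{lem:variance simplification}(b)) showing
\begin{equation*}
\big(\Gamma^{-1}\Sigma\Gamma^{-1}\big)_{2,2}\;=\;I_\beta(\ttheta_0)^{-1}\left(\ddelta\sigma_{1,1}\ddelta^\top-\ssigma_{1,2}\ddelta^\top-\ddelta\ssigma_{1,2}^\top+\ssigma_{2,2}\right)I_\beta(\ttheta_0)^{-1}\;=\;I_\beta(\ttheta_0)^{-1}.
\end{equation*}
This collapse of the $(2,2)$ block back to $I_\beta(\ttheta_0)^{-1}$ is the nontrivial step your outline omits. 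Your final displayed formula happens to coincide with the correct answer, but the route through $\Sigma=\Gamma$ is wrong: if you tried to make Step 3 rigorous by your ``i.i.d.\ mixture'' reduction you would compute a variance missing the $C(\beta)$-proportional term, and your Step 4 would then be internally consistent but start from a false premise. You need instead to (i) compute $\Sigma$ carefully by decomposing into expectation of conditional variance plus a $\bar{Z}$-fluctuation term, using the CLT for $\bar{Z}$ under the mean-field Ising model from \cite{deb2023fluctuations}, and (ii) separately prove the Schur-complement cancellation identity.
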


The mean-field estimator requires computing the nuisance quantity $\hat{U}_n$, and it is natural to question whether there are simpler estimators with the same or better asymptotic variance. We address this in the following remark and show that a natural alternative estimator (denoted as $\thetaamle$) has a larger variance. In \cref{fig:variance}, we display the limiting variance (where $\A_n$ is mean-field, approximately regular, and well-connected) of the three estimators we consider in this paper. The figure verifies that $\thetaiid$ and $\thetaamle$ are sub-optimal compared to $\thetamf$.

\begin{remark}\label{rmk:alternative estimators low tmp}
    An alternative estimation strategy arises from approximating the \emph{true label distribution} $\QQ_{0,\beta,\A_n}$ with a product distribution. Instead of blindly assuming equally likely labels as in the construction of $\thetaiid$, we use the product distribution that is closest to $\QQ_{0,\beta,\A_n}$ in terms of the KL divergence.
    This motivates us to approximate $\QQ_{0,\beta,\A_n}$ as the $n$-fold product of $\text{Rad}(\frac{1+\tilde{m}}{2})$, where $\tilde{m} = \tilde{m}(\X^n):= \begin{cases}
        m  ~&\text{ if }~ \bar{\X} \in \Theta_1\\
        -m ~&\text{ if }~ \bar{\X} \in \Theta_2
    \end{cases}.$ We define $\hat{\ttheta}_n^{\text{aMLE}}$ as the approximate MLE computed under this approximation:
    $$\hat{\ttheta}_n^{\text{aMLE}} := \argmin_{\ttheta \in \Theta_1} \left[ \frac{\ttheta^\top\ttheta}{2} - \frac{1}{n} \sumin \log \cosh(\beta \tilde{m} + \ttheta^\top \X_i) \right].$$
    By a similar argument as in Theorem \ref{thm:upper bound iid}, we can derive the limiting distribution
    \begin{align*}
        \sqrt{n} (\hat{\ttheta}_n^{\text{aMLE}} - \ttheta_0) &\xrightarrow{d} N_d \left(0,\ggamma_{2,2}^{-1} \ssigma_{2,2}\ggamma_{2,2}^{-1} \right).
    \end{align*}
    When $\beta > 1$, this is strictly larger than $I_\beta(\ttheta_0)$ since $\ggamma_{1,2} \neq \mathbf{0}$.
\end{remark}

\begin{figure}[ht!]
    \centering
    \includegraphics[width = 0.8\textwidth]{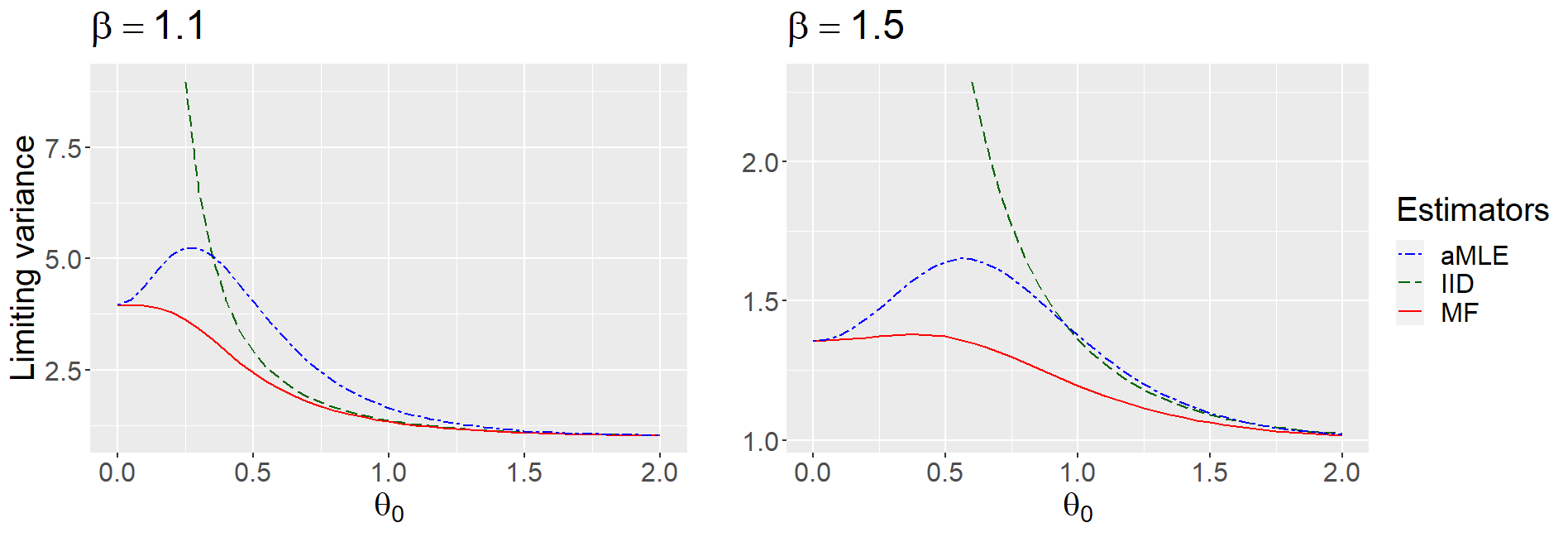}
    \caption{Scaled limiting variance of the estimators considered in this paper; ``IID'' denotes $\thetaiid$, ``MF'' denotes $\thetamf$, and ``aMLE'' denotes $\hat{\ttheta}_n^{\text{aMLE}}$. For both $\beta = 1.1$ and $1.5$, we see that $\thetamf$ has the smallest variance.}
    \label{fig:variance}
\end{figure}

Before moving on to deriving the LAN expansion with a matching precision matrix, we illustrate that $\thetamf$ can be computed by an EM-type iterative algorithm.

\begin{remark}[Computing the mean-field estimator]
    Recall from \cref{lem:uniqueness of minimizer} that the function $M_{\infty}$ is uniquely minimized at $(m, \ttheta_0)$. When $\|\ttheta_0\|$ is large enough, %
    $M_{\infty}$ turns out to be convex. This justifies using the following variational EM algorithm with a random initialization $(\hat{U}_n^{(0)}, \hat{\ttheta}^{(0)})$ to compute $\thetamf$, which iteratively computes
    \begin{align*}
        \begin{pmatrix}
            \hat{U}_n^{(t+1)} \\
            \hat{\ttheta}^{(t+1)}
        \end{pmatrix} = \frac{1}{n} \sumin \begin{pmatrix}
            \beta \\
            \X_i
        \end{pmatrix} \tanh(\beta \hat{U}_n^{(t)} + \hat{\ttheta}^{(t) \top} \X_i), ~ \mbox{for all } t \ge 0.
    \end{align*}
    For a general $\ttheta_0$ and $\beta > 1$, $M_{\infty}$ may have multiple local minimizers \citep[e.g. see Theorem 1 in][]{xu2018benefits}, and the convergence of the above algorithm would depend on the initialization. Hence, for practical purposes, we suggest using the rate-optimal initialization $(\hat{U}_n^{(0)}, \hat{\ttheta}^{(0)}) = (\tilde{m}, \thetaiid)$, which will be already close to $(\hat{U}_n, \thetamf)$. Recall the definition of $\tilde{m}$ from \cref{rmk:alternative estimators low tmp}.
\end{remark}

\paragraph{Lower bound}
Now, we compute the LAN expansion, which will give us the information theoretic lower bound for estimation. We present the LAN expansion under \emph{Curie-Weiss} labels, as we were unable to compute the LAN expansions for other Ising models with a general coupling matrix $\A_n$. Recall from \eqref{eq:CW pmf} that we write the the Curie-Weiss label distribution as $\QQ_{0,\beta}^{\text{CW}}$ and the resulting distribution of $\X^n$ as $P_{\ttheta_0,\beta}^{\text{CW}}$.

Our main ingredient for deriving the matching lower bound is the following expansion: 
\begin{equation}\label{eq:sum x_i w_i low temperature}
    \sumin \X_i \EE^{\QQ_{0,\beta}^\text{CW}} W_i = \sumin \X_i \tanh(\beta U_n + \ttheta_0^\top \X_i) + O_p(1).
\end{equation}
This is a version of \eqref{eq:sum xi wi expansion intro}, where we take the centering $u_n(\beta, \ttheta_0, \X^n) := U_n$.
Here, $U_n$ is defined as the minimizer of $M_{n}(u, \ttheta_0)$ with respect to $u$: 
\begin{equation}\label{eq:u_n defi}
    U_n := \argmin_{|u| \le 1} M_n(u, \ttheta_0) = \argmin_{|u| \le 1} \left[ \frac{\beta u^2}{2} -\frac{1}{n} \sumin \log \cosh(\beta u + \ttheta_0^{\top} \X_i) \right].
\end{equation}
We can interpret $U_n$ as an oracle quantity of $\hat{U}_n$ (defined in \eqref{eq:mean-field estimator definition}), in the sense that we are using the \emph{true value} $\ttheta_0$.
Using these notations, we state the LAN expansion below.

\begin{thm}\label{thm:lower bound}
    Suppose $\beta > 1$ and $\X^n \sim P_{\ttheta_0, \beta}^\text{CW}$. For $\bh \in \mathbb{R}^d$, let $\ttheta_n := \ttheta_0+\frac{\bh}{\sqrt{n}}$. Then, \eqref{eq:sum x_i w_i low temperature} holds, and 
    $$\log \frac{dP_{\ttheta_n, \beta}^\text{CW}}{dP_{\ttheta_0, \beta}^\text{CW}} (\X^n) = \bh^{\top} \tilde{\Delta}_{n, \ttheta_0, \beta}- \frac{1}{2} \bh^\top I_\beta(\ttheta_0) \bh + o_p(1),$$
    where $$\tilde{\Delta}_{n, \ttheta_0, \beta} := \sqrt{n} \left(\frac{1}{n} \sumin \X_i \tanh(\beta U_n + \ttheta_0^\top \X_i) - \ttheta_0 \right) \xrightarrow[P_{\ttheta_0, \beta}^\text{CW}]{d} N_d (0, I_\beta(\ttheta_0)).$$
    Hence, the family $\{P_{\ttheta,\beta}^{\text{CW}} \}_{\ttheta\in \Theta_1}$ is LAN with a precision matrix $I_\beta(\ttheta_0)$ at any $\ttheta_0 \in \Theta_1$.
\end{thm}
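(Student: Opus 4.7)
Since the density of $\X^n$ under $P_{\ttheta,\beta}^{\text{CW}}$ is proportional (in $\ttheta$) to $e^{-n\ttheta^\top\ttheta/2}\,Z_{n,\beta}^{\text{CW}}(\ttheta,\X^n)$, my plan is to Taylor-expand this log-density in $\ttheta$ around $\ttheta_0$, using the Hubbard--Stratonovich transform to analyze the intractable partition function $Z_{n,\beta}^{\text{CW}}(\ttheta,\X^n)$. The HS trick reduces the sum over $\{-1,1\}^n$ to a one-dimensional Gaussian integral, which is amenable to Laplace asymptotics for $\beta>1$ on the event $\{\bar{\X}\in\Theta_1\}$, and delivers in sequence the score identity \eqref{eq:sum x_i w_i low temperature}, the asymptotic Hessian $-I_\beta(\ttheta_0)$, and the joint Gaussian limit for $\tilde{\Delta}_{n,\ttheta_0,\beta}$.

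\textbf{Step 1: Proof of \eqref{eq:sum x_i w_i low temperature}.} The Hubbard--Stratonovich identity $e^{n\beta\bar w^2/2}\propto \int_{\mathbb R}e^{-n\beta s^2/2+\beta s\sumin w_i}\,ds$ augments $\QQ_{\ttheta_0,\beta}^{\text{CW}}$ by an auxiliary scalar $S$ such that $W_1,\dots,W_n\mid S$ are conditionally independent Rademachers with $\EE[W_i\mid S,\X^n]=\tanh(\beta S+\ttheta_0^\top\X_i)$, and the marginal density of $S$ given $\X^n$ is proportional to $e^{-nF_n(s,\ttheta_0)}$ with $F_n(s,\ttheta_0):=\beta s^2/2-n^{-1}\sumin\log\cosh(\beta s+\ttheta_0^\top\X_i)$. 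On the event $\{\bar{\X}\in\Theta_1\}$, \cref{lem:uniqueness of minimizer} together with standard Laplace asymptotics give $\EE[S-U_n\mid\X^n]=O_p(1/n)$ and $\EE[(S-U_n)^2\mid\X^n]=O_p(1/n)$. A second-order Taylor expansion of $\tanh(\beta S+\ttheta_0^\top\X_i)$ around $\beta U_n+\ttheta_0^\top\X_i$, summed over $i$ and integrated against $S$, then gives \eqref{eq:sum x_i w_i low temperature}: the linear correction is $\beta\bigl(\sumin\X_i\sech^2(\cdot)\bigr)\EE[S-U_n\mid\X^n]=O_p(n)\cdot O_p(1/n)=O_p(1)$, and the quadratic remainder is bounded by $O_p(\sumin\|\X_i\|)\cdot O_p(1/n)=O_p(1)$.

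\textbf{Step 2: Taylor expansion.} The log-density gradient is $\nabla_\ttheta\log dP_{\ttheta,\beta}^{\text{CW}}(\X^n)=-n\ttheta+\sumin\X_i\EE^{\QQ_{\ttheta,\beta}^{\text{CW}}}W_i$, so Step~1 gives $\nabla_\ttheta\log dP_{\ttheta_0,\beta}^{\text{CW}}(\X^n)=\sqrt{n}\,\tilde{\Delta}_{n,\ttheta_0,\beta}+O_p(1)$. The Hessian equals $-n\I+\sum_{i,j}\X_i\X_j^\top\Cov^{\QQ_{\ttheta,\beta}^{\text{CW}}}(W_i,W_j)$; decomposing the covariance via the HS variable $S$ yields $\Cov^{\QQ_{\ttheta,\beta}^{\text{CW}}}(W_i,W_j)=\delta_{ij}\sech^2(\beta U_n+\ttheta^\top\X_i)+\beta^2\sech^2(\cdot_i)\sech^2(\cdot_j)\Var(S\mid\X^n)+o_p(1/n)$ with $\Var(S\mid\X^n)=(n\gamma_{1,1})^{-1}+o_p(1/n)$. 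The law of large numbers for the mixture $\PP_{\ttheta_0}$ of \cref{defi:EE ttheta} (obtained after conditioning on $\bar Z\xp m$) then gives $n^{-1}\sum_{i,j}\X_i\X_j^\top\Cov(W_i,W_j)\xp(\I-\ggamma_{2,2})+\gamma_{1,1}^{-1}\ggamma_{1,2}\ggamma_{1,2}^\top$, using $\EE_{\ttheta_0}[\X\sech^2(\beta m+\ttheta_0^\top\X)]=-\ggamma_{1,2}/\beta$. Hence $n^{-1}\nabla^2_\ttheta\log dP_{\ttheta_0,\beta}^{\text{CW}}(\X^n)\xp -I_\beta(\ttheta_0)$. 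After promoting this to uniform convergence on a $n^{-1/2}$-ball around $\ttheta_0$, the second-order Taylor expansion of $\log dP_{\ttheta_n,\beta}^{\text{CW}}(\X^n)-\log dP_{\ttheta_0,\beta}^{\text{CW}}(\X^n)$ delivers the stated LAN form.

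\textbf{Step 3: CLT for $\tilde{\Delta}_{n,\ttheta_0,\beta}$ and main obstacle.} Expanding the fixed-point equation $U_n=n^{-1}\sumin\tanh(\beta U_n+\ttheta_0^\top\X_i)$ around $m$ gives $\sqrt{n}(U_n-m)=(\beta/\gamma_{1,1})\sqrt{n}\bigl(n^{-1}\sumin\tanh(\beta m+\ttheta_0^\top\X_i)-m\bigr)+o_p(1)$, and a delta-expansion of $\tanh(\beta U_n+\ttheta_0^\top\X_i)$ around $\tanh(\beta m+\ttheta_0^\top\X_i)$ represents $\tilde{\Delta}_{n,\ttheta_0,\beta}$ as a linear functional of the two averages $\sqrt n\bigl(n^{-1}\sumin\X_i\tanh(\beta m+\ttheta_0^\top\X_i)-\ttheta_0\bigr)$ and $\sqrt n\bigl(n^{-1}\sumin\tanh(\beta m+\ttheta_0^\top\X_i)-m\bigr)$. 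The principal obstacle is establishing joint asymptotic normality of these two averages under $P_{\ttheta_0,\beta}^{\text{CW}}$, because the $\X_i$s inherit Curie--Weiss correlations through $\Z^n$ rather than being iid. I will handle this by conditioning on $\bar Z$: on $\{\bar\X\in\Theta_1\}$, classical low-temperature Curie--Weiss analysis gives $\bar Z\xp m$ with $\sqrt n(\bar Z-m)$ asymptotically Gaussian, and conditional on $\bar Z$ the labels are exchangeable and asymptotically iid $\textnormal{Rad}((1+\bar Z)/2)$ by a standard de Finetti/slice comparison, so the $\X_i$s become asymptotically iid from the mixture $\PP_{\ttheta_0}$. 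Combining the resulting conditional CLT with the Gaussian fluctuation of $\bar Z$ via a Slutsky argument yields the joint Normal limit, and an algebraic calculation using the formulas in \cref{defi:info} together with the linear coefficient $(\I,-\ggamma_{1,2}/\beta\cdot\beta/\gamma_{1,1})$ verifies that the limiting covariance of $\tilde{\Delta}_{n,\ttheta_0,\beta}$ is exactly $I_\beta(\ttheta_0)$.
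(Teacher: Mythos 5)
Your proposal takes essentially the same route as the paper. The paper introduces exactly this Hubbard--Stratonovich/auxiliary Gaussian variable (called $Y_n$, with $Y_n\mid\W^n,\X^n\sim N(\bar W,1/(n\beta))$) in \cref{lem:W bar moment bound low tmp}, proves the $O_p(1/n)$ first- and second-moment bounds for $Y_n-V_n$ that you invoke, does the same two-term Taylor expansion of the log-likelihood ratio, identifies the Hessian with $\Var^{\QQ_{\xxi_n}}(\sumin\X_i W_i)$ and decomposes it through the auxiliary variable exactly as you sketch, and derives the CLT for $\tilde{\Delta}_{n,\ttheta_0,\beta}$ by expanding the fixed-point equation for $U_n$ and then conditioning on $\Z^n$ (and on $\bar Z$) to restore a Lindeberg--Feller structure, again as you propose. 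The one cosmetic divergence is that the paper obtains the $O_p(1/n)$ bound for $\EE[Y_n-V_n\mid\X^n]$ via the method of exchangeable pairs (Lemma \ref{lem:W bar moment bound low tmp}(d)) rather than a higher-order Laplace expansion, but both arguments deliver the same bound; no genuine gap.
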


Now, in the next Corollary, we combine the upper bound and lower bound, and conclude that $\thetamf$ is indeed optimal. 
\begin{cor}\label{cor:low tmp}
    Suppose $\beta > 1$ and $\X^n \sim P_{\ttheta_0, \beta}^{\text{CW}}$.
    Then, $\thetamf$ is regular, i.e. for $\ttheta_n = \ttheta_0 + \frac{\bh}{\sqrt{n}}$, $$\sqrt{n}(\thetamf - \ttheta_n) \xrightarrow[P_{\ttheta_n,\beta}^\text{CW}]{d} N_d(0, I_\beta(\ttheta_0)^{-1}).$$
\end{cor}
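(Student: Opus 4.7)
\textbf{Proof plan for Corollary \ref{cor:low tmp}.} The strategy is standard Le Cam theory: combine the LAN expansion from \cref{thm:lower bound} with an asymptotic linear representation of $\thetamf$ matching the efficient influence function, then invoke Le Cam's third lemma to transfer the limit of $\sqrt{n}(\thetamf-\ttheta_0)$ from $P_{\ttheta_0,\beta}^{\text{CW}}$ to the contiguous alternative $P_{\ttheta_n,\beta}^{\text{CW}}$.

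The first step is to extract, from the proof of \cref{thm:upper bound}, the asymptotic linear expansion
\begin{equation*}
    \sqrt{n}(\thetamf-\ttheta_0)=I_\beta(\ttheta_0)^{-1}\,\tilde{\Delta}_{n,\ttheta_0,\beta}+o_p(1)\quad\text{under }P_{\ttheta_0,\beta}^{\text{CW}}.
\end{equation*}
To see why this should be available, I would pass to the profile objective $M_n^{\text{prof}}(\ttheta):=\min_{|u|\le 1}M_n(u,\ttheta)$ and let $u(\ttheta)$ be the inner minimizer, so that $\thetamf=\argmin M_n^{\text{prof}}$ and $u(\ttheta_0)=U_n$. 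By the envelope theorem, $\nabla_\ttheta M_n^{\text{prof}}(\ttheta_0)=\ttheta_0-\frac{1}{n}\sumin\X_i\tanh(\beta U_n+\ttheta_0^\top\X_i)=-\tilde{\Delta}_{n,\ttheta_0,\beta}/\sqrt{n}$, and a Schur-complement computation on the Hessian matrix $\Gamma$ of \cref{defi:info} shows that the Hessian of $M_n^{\text{prof}}$ at $\ttheta_0$ converges to $I_\beta(\ttheta_0)$. A standard M-estimator Taylor expansion, using the consistency $\thetamf\xp\ttheta_0$ (which follows from Lemmas \ref{lem:conditional limit } and \ref{lem:uniqueness of minimizer}) and the invertibility of $I_\beta(\ttheta_0)$ from \cref{thm:upper bound}, then produces the display above.

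The second step assembles joint convergence. From \cref{thm:lower bound} we have both $\tilde{\Delta}_{n,\ttheta_0,\beta}\xrightarrow{d}N_d(0,I_\beta(\ttheta_0))$ and the expansion $\log\frac{dP_{\ttheta_n,\beta}^{\text{CW}}}{dP_{\ttheta_0,\beta}^{\text{CW}}}=\bh^\top\tilde{\Delta}_{n,\ttheta_0,\beta}-\frac{1}{2}\bh^\top I_\beta(\ttheta_0)\bh+o_p(1)$, so the continuous mapping theorem yields the joint limit
\begin{equation*}
    \left(\sqrt{n}(\thetamf-\ttheta_0),\;\log\tfrac{dP_{\ttheta_n,\beta}^{\text{CW}}}{dP_{\ttheta_0,\beta}^{\text{CW}}}\right)\xrightarrow{d}N_{d+1}\!\left(\begin{pmatrix}0\\-\tfrac{1}{2}\bh^\top I_\beta(\ttheta_0)\bh\end{pmatrix},\;\begin{pmatrix}I_\beta(\ttheta_0)^{-1}&\bh\\ \bh^\top&\bh^\top I_\beta(\ttheta_0)\bh\end{pmatrix}\right),
\end{equation*}
where the cross-covariance $\bh$ comes from $I_\beta(\ttheta_0)^{-1}\cdot\mathrm{Cov}(\tilde{\Delta}_{n,\ttheta_0,\beta},\bh^\top\tilde{\Delta}_{n,\ttheta_0,\beta})\to I_\beta(\ttheta_0)^{-1}I_\beta(\ttheta_0)\bh=\bh$. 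Le Cam's third lemma (e.g.\ Example 6.7 in \cite{van2000asymptotic}) then gives $\sqrt{n}(\thetamf-\ttheta_0)\xrightarrow[P_{\ttheta_n,\beta}^{\text{CW}}]{d}N_d(\bh,I_\beta(\ttheta_0)^{-1})$, and subtracting $\bh=\sqrt{n}(\ttheta_n-\ttheta_0)$ on both sides finishes the proof.

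The main obstacle is the first step: the linear representation is not stated as such in \cref{thm:upper bound}, so one must verify that the joint Taylor expansion at $(U_n,\ttheta_0)$ produces a remainder that is $o_p(n^{-1/2})$ even though $(\hat{U}_n,\thetamf)$ is determined by a two-variable stochastic optimization whose inner minimizer $U_n$ itself fluctuates on the $n^{-1/2}$ scale. The envelope-theorem/profile approach circumvents the need to track $\hat{U}_n$ separately, but one still needs uniform control of $\nabla^2 M_n$ in a neighborhood of $(m,\ttheta_0)$, which in turn relies on the same concentration tools used to prove \cref{thm:upper bound} together with \eqref{eq:sum x_i w_i low temperature}.
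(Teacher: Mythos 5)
Your overall architecture (linear representation for $\thetamf$, joint convergence with the log-likelihood ratio, Le Cam's third lemma) is exactly the paper's, and steps 2--3 are verbatim the same. The difference is entirely in how step 1 (the representation $\sqrt{n}(\thetamf-\ttheta_0)=I_\beta(\ttheta_0)^{-1}\tilde{\Delta}_{n,\ttheta_0,\beta}+o_p(1)$) is obtained. The paper does not re-derive it: it simply juxtaposes two expansions already proved elsewhere, namely \eqref{eq:upper bound expansion low tmp} (from the remark after \cref{thm:upper bound}, which writes $\sqrt{n}(\thetamf-\ttheta_0)$ in terms of $-\ddelta F_1(m,\ttheta_0)+F_2(m,\ttheta_0)$) and \eqref{eq:Delta expansion} (from Claim~2 in the proof of \cref{thm:lower bound}, which writes $\tilde{\Delta}_{n,\ttheta_0,\beta}$ in terms of the same quantity), and notes these two displays have an identical right-hand side. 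You instead re-derive the representation from scratch via the profile objective $M_n^{\text{prof}}$ and the envelope theorem, which is a genuinely different --- and conceptually cleaner --- route: it bypasses the intermediate $F_1,F_2$ algebra and makes the appearance of the Schur complement $I_\beta(\ttheta_0)$ transparent. The cost is that you take on extra verification burden: you must argue that the inner minimizer $u(\ttheta)$ is a well-defined, differentiable single-valued map in a neighborhood of $\ttheta_0$ (this holds conditionally on $\bar{\X}\in\Theta_1$, by local strict convexity of $M_n(\cdot,\ttheta)$ near $m$, but requires an explicit conditioning step since on the complementary event the relevant minimizer sits near $-m$), and you must push the uniform Hessian control through the implicit function theorem rather than quoting a finished expansion --- you flag this yourself at the end. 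Both routes are valid; the paper's is shorter because it is amortized across the proofs of Theorems~\ref{thm:upper bound} and~\ref{thm:lower bound}, while yours is self-contained at the price of a few technical lemmas about the profiled map.
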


One immediate question is whether one can generalize \cref{thm:lower bound} to Ising models beyond the Curie-Weiss model, possibly to the full extent of coupling matrices $\A_n$s that satisfy the conditions in the upper bound (see \cref{thm:upper bound}).
    The main bottleneck in terms of deriving such a lower bound is the lack of tight concentration results for RFIMs in the low temperature regime. In the high temperature lower bound, we have crucially utilized the moment bounds for RFIMs that were developed in the recent work \cite{lee2024rfim}. However, the results in \cite{lee2024rfim} do not apply to low temperatures, and we are not certain whether this is generally true. Our current proof for \cref{thm:lower bound} computes the RFIM moments by exploiting the low-rank nature of the Curie-Weiss coupling matrix, and cannot be generalized for general mean-field and approximately regular matrices $\A_n$.

We end the section with additional remarks regarding analyzing $\thetamf$ in the high/critical temperature regime, and implications of \cref{thm:lower bound} for testing.

\begin{remark}[Comparison with the high/critical-temperature regime]
    While Theorem \ref{thm:upper bound} analyzed $\thetamf$ only under $\beta > 1$, we can show that its limiting distribution under $\beta \le 1$ is the same as \cref{thm:upper bound iid}. Indeed, for $\beta \le 1$, the definition of $I_{\beta}(\ttheta_0)$ in Definition \ref{defi:info} is consistent with the definition of $I_0(\ttheta_0)$ from the previous section. This follows because $m = 0$ and $\ggamma_{1,2} = \mathbf{0}$, which allows us to simplify $\ggamma_{2,2} - \frac{\ggamma_{1,2} \ggamma_{1,2}^\top}{\gamma_{1,1}} = \ggamma_{2,2} = I_0(\ttheta_0)$. Thus, $\thetamf$ is optimal under Curie-Weiss labels for \emph{all} $\beta \ge 0$.
\end{remark}

\begin{remark}[Testing is easier than estimation in low temperatures]
    Consider testing the hypothesis in Remark \ref{rmk:testing high temperature}. While the LAN expansion in Theorem \ref{thm:lower bound} depends on $\ttheta_0$ and does not define an estimator, this can be directly applied for testing. Indeed, one can simply construct an asymptotically optimal test based on $\tilde{\Delta}_{n, \ttheta_0, \beta}$. Of course, one may also construct an optimal test using the more complicated estimator $\hat{\ttheta}_n^{\text{MF}}$.
\end{remark}

\subsection{Unknown strength of dependence}
In this paper so far, we have established the optimality of estimating the mean parameter $\ttheta$ under the assumption that the Ising model $\QQ_{0, \beta, \A_n}$ is given. In particular, we have assumed the knowledge of the inverse temperature parameter $\beta$.
One immediate question is to understand how the estimation changes when $\beta$ is unknown. Compared to the GMM with iid labels, this roughly corresponds to the setting where the label proportions are unknown.

Here, we provide a partial answer under the Curie-Weiss model with unknown $\beta$. Let $\beta_0$ be the true inverse temperature parameter. First, we test $H_0: \beta_0 \le 1$ v.s. $H_1: \beta_0 > 1$ by rejecting the null when $\lVert\bar{\X}\rVert$ is large enough\footnote{Any threshold $\tau_n$ that satisfies $n^{-1/4} \ll \tau_n \ll 1$ leads to a consistent test}. If $\beta_0 \le 1$, since the assumption that $\beta$ is unknown does not improve the information lower bound $I_0(\ttheta_0)$ \citep[e.g. pg 128 in][]{lehmann2006theory}, the universal estimator $\thetaiid$ continues being optimal. Also, noting that $\beta_0 < 1$ cannot be consistently estimated even when the labels $\Z^n$ are observed \citep{bhattacharya2018inference}, consistent estimation of $\beta$ is impossible. 

When $\beta_0 > 1$, the estimator $\thetamf$ cannot be used since it requires the knowledge of $\beta$. 
{
Indeed, we expect that the information lower bound would change under an unknown $\beta$. To understand this, one may consider the extreme case with $\beta = \infty$, which corresponds to all labels being identical. For $d=1$ dimensions, while one can attain the lower bound of $I_{\infty}(\ttheta) = 1$ given the knowledge of $\beta$ (and consequently identical labels), it is not straightforward otherwise. To rigorously understand optimality, a joint LAN expansion for $(\beta,\ttheta)$ would be required, and we leave this problem for future research.
}
However, $\sqrt{n}$-consistent estimation of $(\beta_0, \ttheta_0)$ is possible; one can still use $\thetaiid$ to estimate $\ttheta_0$, and use $\hat{\beta} := \frac{\tanh^{-1}(\hat{m})}{\hat{m}}$ to estimate $\beta_0$. Here, $\hat{m} := {\lVert \bar{\X} \rVert}/{\lVert \thetaiid \rVert}$ is a method of moment estimator for the label mean $m = m(\beta)$, which arises from noting that $\EE\lVert \bar{\X}\rVert = \lVert \ttheta_0 \rVert m + O\left( \frac{1}{\sqrt{n}}\right)$.

\section{Discussion}
\subsection{Connections with literature}
\noindent \emph{Hidden Markov Random fields.} 
As pointed out in the introduction, mixture models with dependent labels have long been studied in the context of HMRFs, but there is little work regarding inference guarantees.
HMRFs are a popular framework in spatial statistics, genetics, and image segmentation/restoration \citep{besag1986statistical, franccois2006bayesian, pyun2002robust, chatzis2010infinite} to model network dependence among latent variables. This is a generalized notion of hidden Markov models (HMM), which are a special case of HMRFs under a Markov chain dependence. For HMMs, efficiency theory has been previously established using ergodic theory \citep{bickel1996inference, bickel1998asymptotic}. However, their proof techniques are restricted to time series dependence and do not generalize to more dense network dependence that we consider. 

Recently, the model \eqref{eq:model} with HMM labels have been analyzed in the high-dimensional setting \citep{zhang2022mean, karagulyan2024adaptive}, where the authors propose rate-optimal spectral estimators based on a temporal partition of the data. However, this line of research focus on rate-optimal minimaxity, which is different from the asymptotic efficiency with sharp constants. Indeed, we do not expect such moment-based estimators to attain the information-theoretic lower bound.

In terms of HMRFs, one related theoretical work is \cite{lai2015asymptotically}, which considers time-dependent observations from spatial HMRFs and shows asymptotic efficiency of a block-likelihood-based MLE. It is also worth mentioning that after ignoring the temporal effect, the motivating example in \cite{lai2015asymptotically} is also similar to model \eqref{eq:model}. However, \cite{lai2015asymptotically} requires many implicit correlation-decay and mixing conditions regarding the latent dependence, which are extremely challenging to check for individual examples. Furthermore, the block-likelihood still suffers from the intractable normalizing constant within each block. In contrast, our work does not require any such assumptions, and we propose optimal estimators that entirely avoid computing the normalizing constant.

\vspace{2mm}

\noindent \emph{Comparison with inference on Ising models.} 
One popular research question in statistical inference on MRFs is to estimate 
the dependence/inverse temperature parameter $\beta$ \citep{comets1991asymptotics, chatterjee2007estimation, bhattacharya2018inference, ghosal2020joint, xu2023inference, mukherjee2021efficient}. The setting is that one observes the exact labels $\Z^n$ generated from $\QQ_{0, \beta, \A_n}$ with a known graph $ \A_n$, with the goal of estimating the unknown parameter $\beta$. 
Similar to dependent GMMs, the MLE is intractable due to the implicit normalizing constant. In particular, the recent paper \cite{xu2023inference} assumes that  $ \A_n$ is the scaled adjacency matrix of a dense regular graph and provides a complete picture for estimation. They show that consistent estimation of $\beta$ is impossible when $\beta < 1$, whereas the MLE and maximum pseudo-likelihood estimator (MPLE)
are $\sqrt{n}$-consistent when $\beta \ge 1$. While both estimators are optimal when $\beta > 1$, the MPLE is only rate-optimal when $\beta = 1$ and a tractable alternative to the MLE is unknown. 

Compared to this result, for our problem of estimating $\ttheta$ in GMMs, we have already proved in \cref{thm:upper bound iid} that $\sqrt{n}$-consistent estimation is possible for any distribution $\QQ_0$. %
Another comparison is at the critical temperature $\beta = 1$, at which the estimation of $\beta$ exhibits a non-Normal limiting experiment, whereas our estimation of $\ttheta$ still has a Normal limiting experiment. A final remark is that the MPLE, a popular tractable estimator in Ising models and its variants \citep{chatterjee2007estimation, daskalakis2019regression, mukherjee2022estimation, dagan2021learning}, is not applicable to us since the log-normalizing constant in \eqref{eq:MLE definition} depends on $\X_i$ and makes the psuedo-likelihood $\prod_{i=1}^n \PP(\X_i \mid \{\X_j: j\neq i\})$ intractable.

\subsection{Future research directions}
\noindent \emph{General mixture models with dependence.} 
Currently, for simplicity, we have considered the most basic GMM with two symmetric components. It would be interesting to consider GMMs with more components that may not necessarily be symmetric, by modeling the label distribution as a Potts model. %
Alternatively, one could consider other mixture models where the conditional distribution of the observed responses follows an exponential family distribution. We carefully conjecture that similar results, such as a universal $\sqrt{n}$-consistent estimator, can be derived as long as the exponential family distribution exhibits a nontrivial even partition function. 
\vspace{2mm}
\noindent \emph{High-dimensional asymptotics.}
Another interesting direction would to be explore inference guarantees under a high-dimensional setting where $d$, the dimension of the responses, grows with $n$. There has been a recent interest for understanding the estimation of $\ttheta$ under high-dimensional symmetric Gaussian mixture models \citep[][and the references therein]{wu2021randomly, dwivedi2020singularity, karagulyan2024adaptive}, but their focus has been on how the minimax rate changes with respect to the signal strength $\lVert\ttheta\rVert$. Up to our knowledge, the sharp constants for estimation as well as inference guarantees have not been established, even under the iid label setting. To this extent, it would be important to explore the limiting behavior of $\thetaiid$ in high-dimensions and understand whether our universality result (\cref{thm:upper bound iid}) can be generalized. A more challenging question would be to also extend our optimality results to high dimensions.

\vspace{2mm}
\noindent \emph{Labels with non-mean-field dependence.} Finally, an important open question is to understand optimal estimation under label distributions $\QQ_{0,\beta,\A_n}$ generated by non-mean-field matrices $\A_n$. In particular, many practical applications for HMRFs in spatial statistics and image analysis consider a lattice type of dependence, where $ \A_n$ is the adjacency matrix of $\mathbb{Z}^D$ for an integer $D \ge 2$. This choice of $ \A_n$ does not satisfy the mean-field condition \eqref{eq:mean field assumption}, and our optimality results cannot be applied. Based on preliminary simulations, we believe that the universal optimality of $\thetaiid$ in the high temperature regime would no longer hold. Thus, we require different approaches, such as using the recent developments on correlation decay \citep{mukherjee2022testing, ding2023new}. %
We plan to consider the efficiency theory under such sparse graphs in the future.

\subsection{Proof organization}
The remainder of this paper is organized as follows. {In \cref{sec:proof main paper}, we prove the theoretical claims made in Sections \ref{sec:iid estimator} and \ref{subsec:high temperature cw}. First, in \cref{sec:proof iid}, we prove \cref{lem:uniqueness iid} and \cref{thm:upper bound iid}. In \cref{proof:main lemma high tmp}, we prove \cref{lem:sum x_i z_i} by utilizing moment bounds for the RFIM. In \cref{sec:proof high tmp}, we prove \cref{thm:lower bound high temperature} and \cref{cor:high tmp} by combining the \cref{lem:sum x_i z_i} with Le Cam theory.}
We prove all low-temperature results from \cref{subsec:low temperature cw} as well as remaining Lemmas in the Supplementary Material. Hidden constants (in $\lesssim$ or $O(\cdot)$ notations) will be specified in each segment of the proof.

\section{Proof of results in Sections \ref{sec:iid estimator} and \ref{subsec:high temperature cw}}\label{sec:proof main paper}

As we work with dependent responses $\X^n$, we cannot apply the well-known limit theorems for independent random variables.
We first state a dependent variant of the uniform LLN (ULLN) and central limit theorem under model \eqref{eq:model}, which will be used multiple times throughout this section. The proofs of these Lemmas are deferred to \cref{sec:proof of lemmas}. 

Our first lemma is the following ULLN. Note that this automatically implies a non-uniform law of large number as well.

\begin{lemma}[ULLN]\label{lem:ULLN high}
For an arbitrary distribution $\QQ_0$, let $\X^n \sim P_{\ttheta_0, \QQ_0}$, and let $\Psi \subset \mathbb{R}^k$ be a compact set.
For $\bx \in \mathbb{R}^d, \psi \in \Psi$, let $f(\bx, \psi)$ be a bivariate function that is an even in $\bx$ (i.e. $f(\bx, \psi) = f(-\bx, \psi)$) and satisfies the following conditions for finite constants $C_1(\ttheta_0), C_2(\ttheta_0), C_3(\ttheta_0) < \infty$:
\begin{itemize}
    \item $\sup_{\psi \in \Psi} \Var [f(\X, \psi) \mid Z = z] \le C_1(\ttheta_0)$ for $z = \pm 1$. 
    \item $\sup_{\psi \in \Psi} \EE [ | f(\X, \psi)| \mid Z = z] \le C_2(\ttheta_0)$ for $z = \pm 1$.
    \item $\sup_{\psi \in \Psi} \lVert \frac{\partial f}{\partial \psi} (\X, \psi) \rVert \le h(\X)$, where $h$ satisfies $\EE [h(\X) \mid Z = z] \le C_3(\ttheta_0)$ for $z = \pm 1$.
\end{itemize}
Then, 
    \begin{equation*}
        \sup_{\psi \in \Psi} \left| \frac{1}{n} \sumin f(\X_i, \psi) - \EE_{\X \sim N_d(\ttheta_0, \I)} f(\X, \psi) \right| \xp 0.
    \end{equation*}
    The same conclusion holds when $f$ is vector-valued (say, $k'$-dimensional for some finite $k'$) and the absolute value is replaced by any vector norm.
\end{lemma}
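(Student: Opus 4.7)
The plan is to exploit the evenness hypothesis to wash out the dependence on the latent labels and then run a standard iid ULLN argument conditionally on $\Z^n$. The key observation is that for each $\psi$, the conditional distribution of $f(\X_i,\psi)$ given $Z_i = \pm 1$ is the same: if $Z_i = -1$ then $\X_i = -\ttheta_0 + \xxi$ with $\xxi \sim N_d(\mathbf{0}_d,\I)$, and by evenness $f(\X_i,\psi) = f(\ttheta_0 - \xxi,\psi)$, which has the same law as $f(\X,\psi)$ with $\X\sim N_d(\ttheta_0,\I)$ by symmetry of the standard Gaussian. Thus conditionally on $\Z^n$, the variables $\{f(\X_i,\psi)\}_{i=1}^n$ are iid with mean $\EE_{\X\sim N_d(\ttheta_0,\I)} f(\X,\psi)$, which matches the centering in the claim.

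First I would establish the pointwise statement. Fix $\psi \in \Psi$. By the conditional independence above and the bounded conditional variance, Chebyshev's inequality gives
$$\PP\left(\left|\frac{1}{n}\sumin f(\X_i,\psi) - \EE_{\X\sim N_d(\ttheta_0,\I)} f(\X,\psi)\right| > \varepsilon \,\Big|\, \Z^n\right) \le \frac{C_1(\ttheta_0)}{n\varepsilon^2},$$
uniformly over realizations of $\Z^n$; integrating out $\Z^n$ delivers unconditional convergence in probability. Next, I promote this to uniform convergence via a standard $\eta$-net argument. Since $\Psi$ is compact, for any $\eta>0$ there is a finite cover by balls $B(\psi_k,\eta)$, $k = 1,\ldots,K_\eta$. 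For $\psi \in B(\psi_k,\eta)$, the mean value inequality together with the hypothesis $\sup_\psi \|\partial_\psi f(\X,\psi)\| \le h(\X)$ yields $|f(\X_i,\psi) - f(\X_i,\psi_k)| \le \eta\, h(\X_i)$, and analogously for the expectations. Therefore
$$\sup_{\psi\in\Psi}\left|\frac{1}{n}\sumin f(\X_i,\psi) - \EE f(\X,\psi)\right| \le \max_{k\le K_\eta}\left|\frac{1}{n}\sumin f(\X_i,\psi_k) - \EE f(\X,\psi_k)\right| + \eta\left(\frac{1}{n}\sumin h(\X_i) + \EE h(\X)\right).$$
The maximum over the finite net vanishes in probability by the pointwise step and a union bound; the residual is $O_p(\eta)$ because $\EE\bigl[\frac{1}{n}\sumin h(\X_i)\bigr] \le C_3(\ttheta_0)$ by the same conditioning-and-evenness argument applied to $h$. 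Letting $\eta\downarrow 0$ concludes the scalar case, and for vector-valued $f$ one applies the scalar ULLN coordinatewise, using equivalence of finite-dimensional norms.

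The main conceptual obstacle is the dependence within $\Z^n$, which would ordinarily derail standard limit theorems. The evenness assumption on $f$ is precisely the hook that neutralizes this: after conditioning on $\Z^n$, the labels leave no trace on the marginals of $f(\X_i,\psi)$, and the Chebyshev bound in the pointwise step is uniform in the realization of $\Z^n$ thanks to the uniform-in-$z$ variance bound $C_1(\ttheta_0)$. Once this reduction is in place, the rest is a routine bracketing-type argument with Lipschitz envelope $h$, identical in spirit to classical ULLNs for iid data; the moment hypotheses on $f$ and $h$ are exactly what is needed to make each step go through.
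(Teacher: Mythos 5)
Your proof is correct and takes essentially the same route as the paper's. The paper's proof first centers each $f(\X_i,\psi)$ at $g(Z_i,\psi) := \EE[f(\X,\psi)\mid Z=Z_i]$, establishes a conditional concentration bound via a $\delta$-net with Chebyshev and the Lipschitz envelope $h$, and then uses evenness to note $g(1,\psi)=g(-1,\psi)$ so that the label-dependent remainder $\frac{|\bar Z - m^\star|}{2}\sup_\psi|g(1,\psi)-g(-1,\psi)|$ is identically zero; your observation that the conditional law of $f(\X_i,\psi)$ given $Z_i$ is the same for $Z_i=\pm1$ is precisely this insight packaged more directly, and your fixed-$\eta$ net followed by $\eta\downarrow 0$ is an equivalent formulation of the paper's shrinking $\delta=n^{-1/(k+1)}$ net.
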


Our second Lemma computes the limiting distribution of the statistic $\sumin \X_i \tanh(\ttheta_0^\top \X_i)$, and will be used in both the lower and upper bound. Note that this statistic is the gradient of $N_n$ (see \cref{sec:iid estimator}), and also appears as $\Delta_{n,\ttheta_0}(\X^n)$ in the LAN expansion (see \cref{thm:lower bound high temperature}).

\begin{lemma}[Limiting distribution of $\Delta_{n,\ttheta_0}$]\label{lem: CLT high}
Let $\QQ_0$ be an arbitrary measure on $\{-1, 1\}^n$ and let $\X^n \sim P_{\ttheta_0,\QQ_{0}}$.
Then,
\begin{align}\label{eq:N_n expansion}
    -\sqrt{n}(\nabla N_n) (\ttheta_0) = \Delta_{n,\ttheta_0}(\X^n) = \sqrt{n} \left(\frac{1}{n} \sumin \X_i \tanh(\ttheta_0^\top \X_i) - \ttheta_0 \right) \xd N_d(0, I_0(\ttheta_0)).
\end{align}
\end{lemma}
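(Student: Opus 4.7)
The plan is to exploit the symmetry of the integrand to reduce the dependent setting to a classical i.i.d.\ CLT, thereby sidestepping the dependence in $\Z^n$ entirely. Define $\mathbf{Y}_i := \X_i \tanh(\ttheta_0^\top \X_i)$ and observe that the map $g(\bx) := \bx \tanh(\ttheta_0^\top \bx)$ is \emph{even}: $g(-\bx) = -\bx\,\tanh(-\ttheta_0^\top \bx) = \bx \tanh(\ttheta_0^\top \bx) = g(\bx)$. Conditionally on $\Z^n$ the $\X_i$ are independent with $\X_i \mid \Z^n \sim N_d(\ttheta_0 Z_i, \I)$, hence the conditional law of $\mathbf{Y}_i$ given $Z_i = +1$ is that of $g(\ttheta_0 + \epsilon)$ with $\epsilon \sim N_d(\mathbf{0}_d, \I)$, while conditional on $Z_i = -1$ it is the law of $g(-\ttheta_0 + \epsilon) = g(\ttheta_0 - \epsilon)$, which coincides with the former by the symmetry of $\epsilon$. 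Thus the conditional law of $(\mathbf{Y}_1,\ldots,\mathbf{Y}_n)$ given $\Z^n$ is the $n$-fold product of the law of $g(\X)$ with $\X \sim N_d(\ttheta_0,\I)$, and in particular does not depend on $\Z^n$. Marginalizing over $\QQ_0$ leaves this distribution unchanged, so \emph{$\mathbf{Y}_1,\ldots,\mathbf{Y}_n$ are i.i.d.\ under any $\QQ_0$}.

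With the i.i.d.\ structure in hand, the remaining work is to identify the mean and covariance. The mean is $\EE \mathbf{Y}_1 = \EE_{\X\sim N_d(\ttheta_0,\I)} \X \tanh(\ttheta_0^\top \X) = \ttheta_0$, where the last equality is the first-order condition $(\nabla N_\infty)(\ttheta_0) = \mathbf{0}_d$ established in \cref{lem:uniqueness iid}. For the covariance, use $\tanh^2 = 1 - \sech^2$ together with $\EE \X\X^\top = \I + \ttheta_0\ttheta_0^\top$ under $N_d(\ttheta_0,\I)$:
\begin{align*}
\Cov(\mathbf{Y}_1)
&= \EE\!\left[\X\X^\top \tanh^2(\ttheta_0^\top \X)\right] - \ttheta_0\ttheta_0^\top \\
&= \EE\!\left[\X\X^\top\right] - \EE\!\left[\X\X^\top \sech^2(\ttheta_0^\top \X)\right] - \ttheta_0\ttheta_0^\top \\
&= \I - \EE\!\left[\X\X^\top \sech^2(\ttheta_0^\top \X)\right] = I_0(\ttheta_0).
\end{align*}
Finiteness of this covariance is immediate from $\|\tanh\|_\infty \le 1$ and the Gaussian moments of $\X$.

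The identity $-\sqrt n\,(\nabla N_n)(\ttheta_0) = \Delta_{n,\ttheta_0}(\X^n)$ in \eqref{eq:N_n expansion} is just the definition of $N_n$, and $\Delta_{n,\ttheta_0}(\X^n) = \sqrt n\bigl(\tfrac{1}{n}\sum_i \mathbf{Y}_i - \ttheta_0\bigr)$. Applying the standard multivariate Lindeberg--L\'evy CLT to the i.i.d.\ sequence $\{\mathbf{Y}_i\}$ yields $\Delta_{n,\ttheta_0}(\X^n)\xd N_d(0, I_0(\ttheta_0))$, completing the proof.

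There is no substantive obstacle here: once one notices that $g$ is even, the dependence carried by $\QQ_0$ is annihilated by conditioning, and the conclusion is a textbook CLT. The only ``bookkeeping'' step is the covariance calculation, which is routine via the $\tanh^2 + \sech^2 = 1$ identity.
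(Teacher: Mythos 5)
Your proof is correct and rests on the same key observation as the paper's: the conditional law of $\X_i\tanh(\ttheta_0^\top\X_i)$ given $Z_i$ is the same for $Z_i=\pm1$, so the dependence in $\Z^n$ is irrelevant. The only (cosmetic) difference is in the final step: the paper applies a conditional CLT to the conditionally i.i.d. array and then marginalizes, whereas you note directly that since the conditional joint law of $(\mathbf{Y}_1,\ldots,\mathbf{Y}_n)$ given $\Z^n$ is a fixed product measure, the $\mathbf{Y}_i$ are unconditionally i.i.d.\ and the ordinary Lindeberg--L\'evy CLT applies; both are valid and your route avoids invoking a conditional CLT as an external tool.
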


\subsection{Proof of \cref{lem:uniqueness iid} and \cref{thm:upper bound iid}}\label{sec:proof iid}
\cref{lem:uniqueness iid} follows from using the KL divergence to show the uniqueness of the minimization problem, and applying existing analysis of the first order conditions to argue convexity.

\begin{proof}[Proof of \cref{lem:uniqueness iid}]
    The differentiability is immediate. We first show that for any $\ttheta\neq \ttheta_0$ in $\Theta_1$, $N_{\infty}(\ttheta) > N_{\infty} (\ttheta_0)$.
    For any $\ttheta \in \Theta_1$, define a distribution $\bar{\PP}_{\ttheta} \equiv \frac{1}{2} N_d(\ttheta, \I) + \frac{1}{2} N_d(-\ttheta, \I)$, which has the following density function: $$\bar{p}_{\ttheta}(\x) = \frac{\exp \left[ - \frac{\x^{\top} \x}{2} - \frac{\ttheta^\top \ttheta}{2} + \log \cosh(\ttheta^{\top} \x) \right]}{\sqrt{2\pi}^d}.$$
    Then, the definition of $\Theta_1$ as the half-space makes $\{\PPB_{\ttheta}: \ttheta\in \Theta_1 \}$ an identifiable family. Since $\ttheta\neq \ttheta_0$,
    \begin{align*}
        \text{KL}(\PPB_{\ttheta_0} \| \PPB_{\ttheta}) = \EE^{\bar{\PP}_{\ttheta_0}} &\Big[ - \frac{\ttheta_0^\top \ttheta_0}{2} + \log \cosh(\ttheta_0^{\top} \X) +  \frac{\ttheta^\top \ttheta}{2} - \log \cosh(\ttheta^{\top} \X)\Big] > 0,
    \end{align*}
    and we have
    $N_{\infty}(\ttheta) > N_{\infty}(\ttheta_0)$ by rewriting the terms.

    Since $\ttheta_0$ minimizes the differentiable function $N_{\infty}$, we have $(\nabla N_{\infty})(\ttheta_0) = 0$. {This can also be shown directly by using the symmetry of $\log \cosh$ to rewrite $(\nabla N_{\infty}) (\ttheta_0)$ as
    \begin{align*}
        (\nabla N_{\infty})(\ttheta_0) &= \ttheta_0 - \EE_{\X \sim N_d(\ttheta_0, \I)} \X \tanh (\ttheta_0^\top \X) \\
        &= \ttheta_0 - \EE_{\X \sim \frac{1}{2} N_d(\ttheta_0, \I) + \frac{1}{2} N_d(-\ttheta_0, \I)} \X \tanh(\ttheta_0^\top \X)\\
        &= \ttheta_0 - \frac{1}{2 \sqrt{2 \pi}} \int_{\mathbb{R}^d} \x \tanh(\ttheta_0^\top \x)  e^{- \frac{\x^\top \x + \ttheta_0^\top \ttheta_0}{2}} (e^{\ttheta_0^\top \x} + e^{-\ttheta_0^\top \x}) d \x \\
        &= \ttheta_0 - \frac{1}{2 \sqrt{2 \pi}} \int_{\mathbb{R}^d} \x e^{- \frac{\x^\top \x + \ttheta_0^\top \ttheta_0}{2}} (e^{\ttheta_0^\top \x} - e^{-\ttheta_0^\top \x}) d\x \\
        &= \ttheta_0 - \frac{1}{2 \sqrt{2 \pi}} \int_{\mathbb{R}^d} \x e^{- \frac{(\x - \ttheta_0)^\top (\x - \ttheta_0)}{2}} d\x + \frac{1}{2 \sqrt{2 \pi}} \int_{\mathbb{R}^d} \x e^{-\frac{(\x+\ttheta_0)^\top (\x+\ttheta_0)}{2}} d\x = 0.
    \end{align*}
    }
    
    To show the uniqueness of the solution of $\nabla N_{\infty} = 0$ in $\text{int}(\Theta_1)$, we use Theorem 2 in \cite{daskalakis2017ten}. This result states that for a mapping $T: \text{int}(\Theta_1) \to \mathbb{R}^d$ defined as $T(\ttheta) := \EE_{\X \sim N_d(\ttheta_0, \I)} \X \tanh(\ttheta^\top \X)$, we have
    $$\|T(\ttheta) - T(\ttheta_0)\| \le \kappa(\ttheta) \|\ttheta- \ttheta_0\|.$$
    Here, $\kappa(\ttheta) := \exp\left[ - \frac{\min(\ttheta^\top \ttheta, \ttheta_0^\top \ttheta)^2}{2 \ttheta^\top \ttheta} \right] \le 1$ and note that $(\nabla N_{\infty})(\ttheta_0) = 0$ implies $T(\ttheta_0) = \ttheta_0$.
    Suppose that there exists $\ttheta\in \text{int}(\Theta_1) - \{\ttheta_0\}$ such that $$(\nabla N_\infty)(\ttheta) = \ttheta- T(\ttheta) =\mathbf{0}_d.$$ If $\ttheta^\top \ttheta_0 \neq 0$, $\kappa(\ttheta)$ is strictly less than 1, and we have a contradiction. When $\ttheta^\top \ttheta_0 = 0$, Theorem 2 in \cite{daskalakis2017ten} also shows that $T(\ttheta) = 0$ and we have $(\nabla N_\infty)(\ttheta) = \ttheta \neq \mathbf{0}_d$. Consequently, $(\nabla N_{\infty})(\ttheta) = 0$ has an unique root $\ttheta= \ttheta_0$ in $\text{int}(\Theta_1)$. 
\end{proof}
\begin{remark}
    The restriction to the interior is imposed so that the gradient $\nabla N_{\infty}$ is well-defined. By considering the (nonidentifiable) entire domain $\Theta = \mathbb{R}^d-\{\mathbf{0}_d\}$ of $N_{\infty}$, one can remove this restriction and show that $\nabla N_{\infty}(\ttheta) = 0$ if and only if $\ttheta = \pm \ttheta_0$.
\end{remark}

We prove Theorem \ref{thm:upper bound iid} by modifying the classical argument for the asymptotic normality of M-estimators to our dependent setting, with the help of Lemmas \ref{lem:uniqueness iid}, \ref{lem:ULLN high}, and \ref{lem: CLT high}. Along with these, our main ingredient is the conditional independence of $\X^n \mid \Z^n$ and the symmetry of $\X_1 \mid Z_1$.

\begin{proof}[Proof of Theorem \ref{thm:upper bound iid}]
We divide the proof into two steps.

\textbf{Step 1: Consistency.}
We first claim that $\thetaiid$ is consistent. Define $B_{\ttheta_0} := \{\ttheta: \|\ttheta\| \le \|\ttheta_0\| + 2\sqrt{d} \}$.
Applying Lemma \ref{lem:ULLN high} with $f(\x, \ttheta) = \log \cosh (\ttheta^\top \x)$ gives
\begin{equation}\label{eq:ulln}
    \sup_{\ttheta\in \Theta_1 \cap B_{\ttheta_0}} |N_n(\ttheta) - N_{\infty}(\ttheta)| \xp 0.
\end{equation}
Recalling that $\thetaiid = \argmin_{\ttheta\in \Theta_1} N_n(\ttheta),$ we have $N_n(\thetaiid) \le N_n(\ttheta_0) = N_{\infty}(\ttheta_0) + o_p(1).$ Also, because the first order conditions of \eqref{eq:iid estimator definition} give $\thetaiid = \frac{1}{n} \sumin \X_i \tanh(\X_i^\top \thetaiid)$, a naive bound using $\|\X_i - \ttheta_0\| \equiv \sqrt{\chi^2_d}$ implies
\begin{align}\label{eq:estimator norm bound}
    \|\thetaiid\| \le \frac{1}{n}\sumin \|\X_i\| \le \|\ttheta_0\| + \frac{1}{n}\sumin \|\X_i - \ttheta_0\| \le \|\ttheta_0\| + 2\sqrt{d}
\end{align} with high probability. Thus, $\thetaiid \in B_{\ttheta_0}$ with high probability and \eqref{eq:ulln} gives
$N_n(\thetaiid) - N_{\infty}(\thetaiid) = o_p(1).$
Combining this, we have 
$$N_{\infty}(\thetaiid) \le N_{\infty}(\ttheta_0) + o_p(1).$$
By Lemma \ref{lem:uniqueness iid}, $N_{\infty}(\ttheta)$ is a continuous function that is uniquely minimized at $\ttheta_0$. Thus, for any $\epsilon > 0$, we have $\delta := \inf_{\ttheta \in B_{\ttheta_0}, \|\ttheta- \ttheta_0\| > \epsilon} N_{\infty}(\ttheta) - N_{\infty}(\ttheta_0) > 0$. Hence, combining the two displays above,
$$\PP(\|\thetaiid - \ttheta_0\| > \epsilon) \le \PP(N_{\infty}(\thetaiid) - N_{\infty}(\ttheta_0) > \delta) + \PP(\thetaiid \not \in B_{\ttheta_0}) \to 0.$$

\textbf{Step 2: Limiting distribution.}
The definition of $\thetaiid$ gives $(\nabla N_{n}) (\thetaiid) = 0$.
By a Taylor expansion of $\nabla N_n$ around $\thetaiid \approx \ttheta_0$, we have
\begin{equation}\label{eq:iid est eq}
    \sqrt{n} (\thetaiid - \ttheta_0) = - ((\nabla^2 N_n)(\boldsymbol{\xi}_n))^{-1}  {\sqrt{n} (\nabla N_n) (\ttheta_0)},
\end{equation}
for some $\boldsymbol{\xi}_n \in (\thetaiid, \ttheta_0)$. Note that Step 1 implies $\boldsymbol{\xi}_n \xp \ttheta_0$. 

For simplicity, denote the Hessian as a function $H_n(\ttheta) := \nabla^2 N_n (\ttheta)$.
We first claim that $H_n(\boldsymbol{\xi}_n) \xp I_0(\ttheta_0)$.
We apply Lemma \ref{lem:ULLN high} with $f(\x,\psi) = \bx \bx^\top \sech^2(\psi^\top \x)$ and $\Psi = B_{\ttheta_0}$, to write 
\begin{align}
    H_n(\boldsymbol{\xi}_n) &= \I - \frac{1}{n}\sumin \X_i \X_i^\top \sech^2(\boldsymbol{\xi}_n^\top \X_i) \notag \\
    &= \I - \EE_{\X \sim N_d(\ttheta_0,\I)} \X \X^\top \sech^2(\boldsymbol{\xi}_n^\top \X) + o_p(1 ) \notag \\
    &\xp \I - \EE_{\X \sim N_d(\ttheta_0,\I)} \X \X^\top \sech^2(\ttheta_0^\top \X) = I_0(\ttheta_0). \label{eq:hessian convergence}
\end{align}
The last convergence follows from the continuous mapping theorem.

\vspace{2mm}
Now, the first conclusion in the theorem follows by plugging the above limit in \eqref{eq:iid est eq}:
\begin{equation}\label{eq:iid est eq 2}
    \sqrt{n}(\thetaiid - \ttheta_0) = I_0(\ttheta_0)^{-1} \frac{1}{\sqrt{n}} \sumin (\X_i \tanh(\ttheta_0^\top \X_i) - \ttheta_0) + o_p(1).
\end{equation}
The second conclusion immediately follows by plugging the limiting distribution in \cref{lem: CLT high} to \eqref{eq:iid est eq 2} and simplifying the variance.
\end{proof}

\subsection{Proof of Lemma \ref{lem:sum x_i z_i}}\label{proof:main lemma high tmp}
The main idea for proving \cref{lem:sum x_i z_i} is to use a Taylor expression to simplify the LHS of \eqref{lem:sum x_i z_i} in terms of the linear and quadratic forms of the ``local fields'' $m_i(\W^n) := \sum_{j \neq i} A_n(i,j) W_j$; see eq. \eqref{eq:linear term high tmp}. We use the following two Lemmas that provide moment bounds for local fields under the two different assumptions in \cref{lem:sum x_i z_i}. We state the two Lemmas separately due to technical differences within proofs.
Recall that $\EE^{\QQ_{\ttheta}}$ denotes the conditional expectation with respect to ${\QQ}_{\ttheta}(\W^n)$, and is always conditioned on $\X^n$.

    \begin{lemma}\label{lem:RFIM moment bound}
        Suppose $\W^n \sim \QQ_{\ttheta} = \QQ_{\ttheta,\beta,\A_n,\X^n},$ where $\beta<1$ and $\ttheta, \X^n$ are arbitrary deterministic values. Then, for $$C_1(\ttheta, \X^n) : = \sumin \Big[\sumjn A_n(i,j) \tanh(\ttheta^\top \X_j) \Big]^2,$$ the following holds, where the hidden constant only depends on $\beta$.
        \begin{enumerate}[(a)]
            \item $\EE^{\QQ_{\ttheta}} \Big[\sumin m_i^2(\W^n) \Big] \lesssim n \an+ C_1(\ttheta, \X^n).$
            \item For any real-valued vector $\mathbf{d} = (d_1, \ldots, d_n),$ we have
            $$|\EE^{\QQ_{\ttheta}} \sumin d_i (W_i - \tanh(\ttheta^\top \X_i))| \lesssim \lVert \mathbf{d} \rVert (1 + \sqrt{n \alpha_n^2} + \sqrt{C_1(\ttheta, \X^n)}).$$
        \end{enumerate}
        
    \end{lemma}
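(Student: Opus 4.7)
The plan is to exploit the one-step conditional (heat-bath) identity
$\EE^{\QQ_{\ttheta}}[W_i \mid W_{-i}] = \tanh(\beta m_i(\W) + \ttheta^\top \X_i)$,
which places $\QQ_\ttheta$ in a Dobrushin-type contractive regime whenever $\beta < 1$ and $\|\A_n\|_\infty\to 1$. This contraction tells us that $\W$ behaves qualitatively like the product measure with Rademacher marginals of mean $\tanh(\ttheta^\top\X_j)$, so the quadratic statistic in (a) and the linear statistic in (b) can be controlled by a perturbation around that product law. The bounds one needs are of the same flavour as the linear-statistic moment bounds for RFIMs developed in \cite{lee2024rfim}, and I would aim to derive (a) and (b) in parallel with those tools.

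For part (a), I would first unfold
$\sum_i m_i(\W)^2 = \sum_{j,k} B_{jk} W_j W_k$, where $B_{jk} := \sum_{i\neq j,k} A_n(i,j) A_n(i,k)$,
and split diagonal from off-diagonal. Because $W_j^2 = 1$, the diagonal contribution equals $\sum_j B_{jj} = \sum_i \sum_{j\neq i} A_n(i,j)^2 \le n\alpha_n$. For the off-diagonal part $\sum_{j\neq k} B_{jk}\EE^{\QQ_\ttheta} W_j W_k$ I would decompose $\EE W_jW_k = \EE W_j\,\EE W_k + \Cov(W_j,W_k)$: the covariance sum is handled by a standard high-temperature comparison bound $\Cov(W_j,W_k) \lesssim \bigl(\A_n(\I - \beta\A_n)^{-1}\bigr)_{jk}$ (valid since $\beta\|\A_n\|_\infty<1$), summing to another $O(n\alpha_n)$ term; the product piece I would approximate by replacing $\EE W_j$ with $\tanh(\ttheta^\top\X_j)$, whose leading contribution is exactly $\sum_i \bigl(\sum_{j\neq i} A_n(i,j)\tanh(\ttheta^\top\X_j)\bigr)^2 = C_1(\ttheta,\X^n)$, up to diagonal corrections absorbed into $n\alpha_n$.

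For part (b), set $g_i := \EE^{\QQ_\ttheta} W_i - \tanh(\ttheta^\top\X_i)$ and Taylor expand
\begin{equation*}
g_i = \EE\bigl[\tanh(\beta m_i + \ttheta^\top\X_i) - \tanh(\ttheta^\top\X_i)\bigr] = \beta\,\sech^2(\ttheta^\top\X_i)\,\EE m_i + R_i,
\end{equation*}
with remainder $|R_i| \lesssim \beta^2 \EE m_i^2$. Summing $R_i$ against $d_i$ and using Cauchy--Schwarz with the pointwise bound $\EE m_i^2 \lesssim \alpha_n$ (from the variance of $m_i$ under the contractive RFIM) produces the $\|\mathbf d\|\cdot \alpha_n\sqrt{n} = \|\mathbf d\|\sqrt{n\alpha_n^2}$ contribution. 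The linear piece is itself a weighted linear statistic of $\EE\W$: substituting $\EE W_j = \tanh(\ttheta^\top\X_j)+g_j$ and iterating, the $\tanh$ part yields a Cauchy--Schwarz bound of order $\|\mathbf d\|\sqrt{C_1}$ because the $j$-sum assembles into the same quadratic form defining $C_1$, while the $g_j$ part contracts by the factor $\beta<1$ and closes as a geometric series.

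The main obstacle is to obtain the sharper $\sqrt{n\alpha_n^2}$ rate in (b) rather than the weaker $\sqrt{n\alpha_n}$ one gets from naively inserting (a). This requires tracking \emph{both} the aggregate bound $\sum_i\EE m_i^2 \lesssim n\alpha_n + C_1$ from (a) and the uniform pointwise bound $\max_i \EE m_i^2\lesssim \alpha_n$, and then feeding them into the correct branch of the Taylor remainder. A secondary subtlety is that the iteration in (b) and the covariance comparison in (a) must both stay uniform in the random fields $\ttheta^\top\X_i$, with constants depending only on $\beta$; the row-sum normalization $\|\A_n\|_\infty \to 1$ together with the FKG/ferromagnetic structure of $\QQ_\ttheta$ (inherited from $\A_n \ge 0$) should give exactly this uniformity.
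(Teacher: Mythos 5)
Your plan diverges substantially from the paper's proof. The paper does not Taylor-expand around $\tanh(\ttheta^\top\X_i)$ directly; it introduces the mean-field optimizers $\bu\in[-1,1]^n$ (from the Gibbs variational principle) with $u_i = \tanh(s_i + c_i)$ and $s_i = \sum_{j\neq i}A_n(i,j)u_j$, and then cites three ready-made bounds from \cite{lee2024rfim}: a deterministic bound $\sum_i s_i^2 \lesssim C_1(\ttheta,\X^n)$, a moment bound $\EE^{\QQ_\ttheta}\sum_i(m_i - s_i)^2\lesssim n\alpha_n$, and the $L^2$ linear-statistic bound $\EE^{\QQ_\ttheta}\bigl[\sum_i d_i(W_i - u_i)\bigr]^2 \lesssim \|\mathbf d\|^2(1 + n\alpha_n^2)$. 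Part (a) is then the triangle inequality $\EE\sum m_i^2 \le 2\EE\sum(m_i-s_i)^2 + 2\sum s_i^2$, and part (b) follows from $|\sum d_i(u_i - v_i)| \le \|\mathbf d\|\sqrt{\sum s_i^2}\lesssim\|\mathbf d\|\sqrt{C_1}$ combined with the $L^2$ bound on $\sum d_i(W_i - u_i)$, where $v_i = \tanh(c_i)$. The choice of $u_i$ rather than $v_i$ as the centering is doing the real work here.

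The concrete gap in your approach is the claimed pointwise bound $\max_i\EE^{\QQ_\ttheta} m_i^2\lesssim\alpha_n$, which is false. You derive it from the variance of $m_i$, but $\EE m_i^2 = \Var(m_i) + (\EE m_i)^2$, and the bias $(\EE m_i)^2$ need not be small: with $c_i = \ttheta^\top\X_i$ all positive and large one has $\EE^{\QQ_\ttheta}W_j$ close to $1$, hence $\EE m_i = \sum_j A_n(i,j)\EE W_j \approx \sum_j A_n(i,j)\approx 1$ by the row-sum normalization, so $\EE m_i^2 = \Theta(1)$ rather than $O(\alpha_n)$. Since you feed this false bound into the Taylor remainder $|R_i|\lesssim\beta^2\EE m_i^2$, the $\|\mathbf d\|\sqrt{n\alpha_n^2}$ contribution does not follow, and what you actually obtain is the weaker $\sqrt{n\alpha_n + C_1}$ rate you explicitly identify as insufficient. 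The paper circumvents this by Taylor-expanding around $u_i$, which absorbs the bias $\EE m_i$ self-consistently: $u_i = \tanh(s_i + c_i)$ with $s_i$ tracking the mean local field, so $\sum_i(u_i - v_i)^2\le\sum_i s_i^2\lesssim C_1$ is a \emph{deterministic} inequality, and the $L^2$ bound on $\sum d_i(W_i - u_i)$ handles the genuinely random residual at the sharp $n\alpha_n^2$ scale. There is also a latent circularity in your part (a): after replacing $\EE W_j$ by $\tanh(c_j)$ you must control $\sum_j(\EE W_j - \tanh(c_j))^2$, which is the $\mathbf d = \mathbf e_j$ instance of part (b), while your argument for (b) invokes the aggregate bound from (a). The variational optimizers break this loop because their properties are established independently of the moment bounds you are trying to prove.
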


    \begin{lemma}\label{lem:W bar moment bound high tmp}
        Suppose $\beta = 1$, $\X^n \sim P_{\ttheta_0, \beta}^{\text{CW}}$, and fix any $\ttheta\in \Theta$. Then, $\EE^{\QQCW} [\bar{W}^2] = O_p\left(\frac{1}{n} \right),$ where the hidden constant is universal.
    \end{lemma}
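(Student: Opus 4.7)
The plan is to use the Hubbard--Stratonovich transformation to decouple the quadratic Curie--Weiss interaction and reduce the problem to a one-dimensional saddle-point analysis, then combine it with Stein-type integration by parts to extract a clean identity for $\EE^{\QQCW}[\bar W^2]$.

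First, since $\QQCW$ contains the factor $e^{n\bar w^2/2}$ at $\beta=1$, I would write
\[
e^{n\bar w^2/2} \;\propto\; \int_{\mathbb R} e^{-ns^2/2 + s\sum_i w_i}\,ds,
\]
thereby augmenting $\QQCW$ with an auxiliary variable $s$. Under the extended measure, the spins $W_1,\ldots,W_n$ are conditionally independent given $s$, with $\EE[W_i \mid s,\X^n] = \tanh(s + \ttheta^\top \X_i)$, while the marginal density of $s$ is proportional to $e^{n\phi_n(s)}$, where
\[
\phi_n(s) := -\frac{s^2}{2} + \frac{1}{n}\sumin \log\cosh(s + \ttheta^\top \X_i).
\]
Using conditional independence, $\EE^{\QQCW}[\bar W^2] = \EE_s\bigl[\bar T(s)^2 + \tfrac{1}{n}\bar T'(s)\bigr]$, where $\bar T(s) := \tfrac{1}{n}\sumin \tanh(s+\ttheta^\top \X_i)$.

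Second, I would exploit the key observation $\bar T(s) - s = \phi_n'(s)$ to apply integration by parts against the density $e^{n\phi_n(s)}$: for smooth $g$, $\EE_s[(s-\bar T(s))\,g(s)] = \tfrac{1}{n}\EE_s[g'(s)]$. Taking $g(s)=s$ gives $\EE_s[sT(s)] = \EE_s[s^2] - 1/n$, while $g(s) = \bar T(s)$ gives $\EE_s[\bar T(s)^2] = \EE_s[s\bar T(s)] - \tfrac{1}{n}\EE_s[\bar T'(s)]$. Chaining these and substituting back yields the clean identity
\[
\EE^{\QQCW}[\bar W^2] = \EE_s[s^2] - \tfrac{1}{n}.
\]
This is the heart of the argument: the problem is reduced to bounding the second moment of the scalar random variable $s$.

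Third, I would bound $\EE_s[s^2]$ via a Laplace/saddle-point argument. Because $\ttheta \in \Theta$ is nonzero, $1 - \bar T'(s)$ converges to $1 - \EE_{\ttheta_0}[\sech^2(\ttheta^\top \X)] > 0$ uniformly on compacts. Hence $\phi_n$ is strictly concave near its unique maximizer $s^*$ with nondegenerate curvature, and standard Laplace asymptotics give $\EE_s[s^2] = (s^*)^2 + O(1/n)$ with the $O(1/n)$ constant depending on $\ttheta,\ttheta_0$ only. It then remains to verify $(s^*)^2 = O_p(1/n)$ from the fixed-point equation $s^* = \bar T(s^*)$, by expanding $\bar T$ around $0$ and bounding $\bar T(0)$ probabilistically.

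The main obstacle is precisely this last probabilistic control of $s^*$ at the critical temperature. At $\beta=1$, the underlying Curie--Weiss labels exhibit non-Gaussian fluctuations $\bar Z = O_p(n^{-1/4})$, which naively propagates through $\ttheta^\top \X_i = \ttheta^\top\ttheta_0 Z_i + \ttheta^\top\epsilon_i$ to give $\bar T(0) = O_p(n^{-1/4})$. To achieve the sharp target $(s^*)^2 = O_p(1/n)$, the proof must exploit finer cancellations specific to the low-rank Curie--Weiss structure --- for instance, by using the joint randomness of $\X^n$ and the Hubbard--Stratonovich variable to cancel the $O(n^{-1/4})$ contribution, or by leveraging the symmetry of $\bar T(0)$ around $0$ together with higher-order Taylor corrections in $\phi_n$. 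This step is the technical bottleneck and will rely on the exchangeability and rank-one nature of the Curie--Weiss coupling matrix.
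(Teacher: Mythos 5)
Your strategy is structurally the same as the paper's. The Hubbard--Stratonovich augmentation is precisely the auxiliary Gaussian $Y_n \mid \W^n, \X^n \sim N(\bar W, 1/(n\beta))$ that the paper introduces in Lemma \ref{lem:W bar moment bound low tmp}, the marginal density $\propto e^{n\phi_n(s)} = e^{-n\tilde f_n(s)}$ is identical, and the Laplace analysis around the saddle point $s^* = V_n$ is the same as the paper's Step 2. Your ``clean identity'' $\EE^{\QQCW}[\bar W^2] = \EE_s[s^2] - 1/n$ is correct, but the Stein integration-by-parts chain is a detour: since $s\mid\W^n,\X^n \sim N(\bar W, 1/n)$ by construction, the identity is simply $\EE[s^2] = \EE[\bar W^2] + 1/n$, which is what the paper uses through $\EE(Y_n - \bar W)^2 = 1/(n\beta)$. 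So steps 1--3 buy you nothing beyond the paper's argument.

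The gap you flag in step 4 is genuine, and it is in fact the same unresolved step in the paper. The paper's Step 1 asserts that ``the numerator $\frac{1}{n}\sumin \tanh(\ttheta^\top\X_i)$ is $O_p(1/\sqrt n)$'' and concludes $V_n = O_p(1/\sqrt n)$, but provides no justification, and your concern applies directly. Decomposing by conditioning on $\Z^n$ and using the oddness of $\tanh$,
\begin{equation*}
\frac{1}{n}\sumin \tanh(\ttheta^\top\X_i) = \mu\,\bar Z + O_p\!\left(n^{-1/2}\right), \qquad \mu := \EE_{\X\sim N_d(\ttheta_0,\I)}\tanh(\ttheta^\top\X),
\end{equation*}
and $\mu \neq 0$ whenever $\ttheta^\top\ttheta_0 \neq 0$ (in particular for $\ttheta = \ttheta_0$). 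At $\beta = 1$ the Ellis--Newman critical fluctuation gives $\bar Z = \Theta_p(n^{-1/4})$, so the numerator is $\Theta_p(n^{-1/4})$ rather than $O_p(n^{-1/2})$, whence $s^* = V_n = \Theta_p(n^{-1/4})$ and $\EE_s[s^2] = \Theta_p(n^{-1/2})$. You are hoping for a ``finer cancellation specific to the low-rank Curie--Weiss structure,'' but I do not see one, and the paper does not exhibit one either; it simply asserts the bound. This is not a technicality you can safely defer --- it is the load-bearing step, and as written your proposal (and the paper's proof) does not close it. If the lemma is to be rescued, one would either have to restrict to $\ttheta$ with $\ttheta^\top\ttheta_0 = 0$, or supply an additional argument showing how the $\mu\bar Z$ contribution to $V_n$ gets cancelled in the downstream use, neither of which is in your write-up.
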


{Note that the bounds in \cref{lem:RFIM moment bound} involve the quantity $C_1(\ttheta, \X^n)$, which is a complicated function of $\X^n$. However, assuming that $\X^n$ is generated from a true GMM $P_{\ttheta_0, \QQ_{0}}$, we can additionally bound $C_1$ in terms of $\an$. We generalize this claim in the following Lemma.}

\begin{lemma}\label{lem:conditional sum}
    Suppose $\beta <1$, $\Z^n \sim \QQ_{0,\beta, \A_n}$, and $\X^n \sim P_{\ttheta_0,\QQ_{0,\beta, \A_n}}$. Then, the following holds, where $\an = \max_{i=1}^n \sumjn A_n(i,j)^2$ and the hidden constants only depend on $K, C$.
    \begin{enumerate}[(a)]
    \item Let $\phi: \mathbb{R}^d \to \mathbb{R}$ be an odd function with $|\EE(\phi(\X) | Z = z)| \le K$ and $\Var(\phi(\X) | Z = z) \le C$ for $K, C < \infty$ and $z = \pm 1$. Then,
    \begin{equation*}
        \EE \sumin \left(\sumjn A_n(i,j) \phi(\X_j) \right)^2 = O(n \an).
    \end{equation*}
    \item Let $\phi_1, \phi_2: \mathbb{R}^d \to \mathbb{R}$ be odd functions with $|\EE(\phi_a(\X) | Z = z)| \le K_a$ and $\Var(\phi_a(\X) | Z = z) \le C$, $\Cov (\phi_1(\X), \phi_2(\X) | Z = z) \le C$ for $a = 1,2$, and $z = \pm 1$, where $K_a, C < \infty$. Then,
    $$\EE \left( \sumij A_n(i,j) \phi_1(\X_i) \phi_2(\X_j) \right)^2 = O(n^2 \alpha_n^2 + n\an).$$
    \end{enumerate}
    \end{lemma}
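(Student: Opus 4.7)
The plan is to exploit the oddness of $\phi$ (and of $\phi_1,\phi_2$) together with the conditional independence of $\X^n$ given $\Z^n$. Because $(\X\mid Z=-1) \stackrel{d}{=} -(\X\mid Z=1)$ and $\phi$ is odd, one has $\EE[\phi(\X)\mid Z] = cZ$ with $c := \EE[\phi(\X)\mid Z=1]$. This yields the decomposition $\phi(\X_i) = c Z_i + \epsilon_i$, where conditionally on $\Z^n$ the residuals $\epsilon_i$ are independent with mean zero and variance bounded by $C$; the analogous decomposition $\phi_a(\X_i) = c_a Z_i + \epsilon_i^{(a)}$ applies in part (b). After this reduction, I kill the $\epsilon_i$ contributions by conditioning on $\Z^n$, and reduce the $Z$-contribution to moment estimates of linear and quadratic forms in $\Z^n$, which I obtain from \cref{lem:RFIM moment bound}(a) applied at $\ttheta=\mathbf{0}$ (so that $C_1(\mathbf{0},\X^n)=0$), giving $\EE^{\QQ_0}\sumin m_i^2(\Z^n) = O(n\an)$.

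For part (a), substituting yields $\sumjn A_n(i,j)\phi(\X_j) = c\, m_i(\Z^n) + \sumjn A_n(i,j)\epsilon_j$. Squaring, summing over $i$, and taking expectation, the cross term vanishes (since $\EE[\epsilon_j\mid \Z^n]=0$), while the two squared contributions are $c^2 \EE \sumin m_i^2(\Z^n) = O(n\an)$ by the Ising moment bound just stated, and $\EE \sumin (\sumjn A_n(i,j)\epsilon_j)^2 = \EE\sumin \sumjn A_n(i,j)^2 \Var(\epsilon_j\mid Z_j) \le C\sumin\sumjn A_n(i,j)^2 \le Cn\an$ by conditional independence.

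For part (b), expanding the product gives four contributions
\[
\sumij A_n(i,j)\phi_1(\X_i)\phi_2(\X_j) = c_1 c_2\,\Z^\top \A_n \Z + c_1 S_1 + c_2 S_2 + S_3,
\]
with $S_1 := \sumij A_n(i,j) Z_i\epsilon_j^{(2)}$, $S_2 := \sumij A_n(i,j)\epsilon_i^{(1)} Z_j$, and $S_3 := \sumij A_n(i,j)\epsilon_i^{(1)}\epsilon_j^{(2)}$. Conditional independence of the $\epsilon$'s given $\Z^n$ immediately gives $\EE S_1^2,\EE S_2^2 \le C\EE\sumin m_i^2(\Z^n) = O(n\an)$, and for $\EE[S_3^2\mid\Z^n]$ the only index configurations that survive are $(i,j)=(k,l)$ and $(i,j)=(l,k)$ because $\A_n$ has zero diagonal, leading to $\EE S_3^2 \le C\sumij A_n(i,j)^2 \le Cn\an$.

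The only genuinely hard step is then the pure Ising quadratic form: I must show $\EE^{\QQ_0}(\Z^\top \A_n \Z)^2 = O(n^2 \an^2 + n\an)$. Writing $T := \Z^\top \A_n \Z = \sumin Z_i m_i(\Z^n)$, I split $\EE T^2 = (\EE T)^2 + \Var T$. For the mean, the conditional law $\EE[Z_i\mid \Z^{(-i)}] = \tanh(\beta m_i)$ together with the inequality $|u\tanh(\beta u)| \le \beta u^2$ (valid for $\beta\le 1$ and $|u|\le 1$, which holds under the normalization \eqref{eq:row sum one scaling}) yields $|\EE T| \le \beta\EE\sumin m_i^2 = O(n\an)$, hence $(\EE T)^2 = O(n^2\an^2)$. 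For the variance, flipping $Z_i$ changes $T$ by exactly $-4Z_i m_i$, so a Glauber Poincar\'e-type inequality for the high-temperature Ising model (available for $\beta < 1$ since $\beta\|\A_n\|_{\mathrm{op}} < 1$, in the spirit of the concentration toolkit developed in \cite{lee2024rfim}) gives $\Var T \lesssim \EE\sumin m_i^2 = O(n\an)$. I expect this variance bound to be the main obstacle: everything else (the $\epsilon$-bookkeeping, the mean estimate, and the reduction to $\EE T^2$) is routine once \cref{lem:RFIM moment bound}(a) is in hand, but obtaining $\Var T = O(n\an)$ directly requires either a clean appeal to high-temperature Ising concentration or an explicit expansion of the quadruple sum combined with correlation decay in the $\beta < 1$ regime.
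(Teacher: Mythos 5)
Your argument is correct and follows the paper's proof essentially step by step: center $\phi_a(\X_i)$ at its conditional mean $c_aZ_i$ (oddness plus $(\X\mid Z=-1)\overset{d}{=}-(\X\mid Z=1)$ give $\EE[\phi_a(\X_i)\mid Z_i]=c_aZ_i$), use conditional independence given $\Z^n$ to handle the residuals, and reduce everything to the Ising moment bounds $\EE\sumin m_i^2(\Z^n)\lesssim n\an$ and $\EE(\Z^\top\A_n\Z)^2=O(n^2\an^2+n\an)$. The variance bound you flag as the main obstacle is supplied in the paper's \cref{lem:Ising moment concentration}(a) by Theorem 2.1 of \cite{gheissari2018concentration}, which gives $\Var(\Z^\top\A_n\Z)\lesssim\lVert\A_n\rVert_F^2\le n\an$ for $\beta<1$; your Glauber--Poincar\'e route under the spectral condition $\beta\lVert\A_n\rVert<1$ (guaranteed asymptotically by \eqref{eq:row sum one scaling}) is an equally valid way to obtain the same $O(n\an)$ estimate.
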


\begin{proof}[Proof of Lemma \ref{lem:sum x_i z_i}] 
We separate the proofs under the two different assumptions we have on the Ising model $\QQ_0$. Throughout this proof, all hidden constants  will depend just on $\beta, \|\ttheta_0\|, d$, and \emph{not depend on} $\ttheta$ nor $\X^n$. Also, let $c_i = c_i(\ttheta):= \ttheta^\top \X_i$ denote the random fields of $\QQ_{\ttheta}$. Then, \eqref{eq:uniform bound for sum x_i z_i} can be re-written as:
\begin{equation}\label{eq:uniform bound with c_i}
        \sup_{\ttheta\in \Theta} \Big\lVert \EE^{\QQ_{\ttheta}} \Big[\sumin \X_i W_i \Big] - \sumin \X_i \tanh(c_i) \Big\rVert = o_p\left({\sqrt{n}} \right).
    \end{equation}

\vspace{2mm}
\noindent \textbf{Proof under $\beta <1$ and the mean-field assumption \eqref{eq:mean field assumption}.}
    We first prove \eqref{eq:uniform bound with c_i} for any deterministic $\X^n$ that satisfies the following conditions:
    \begin{enumerate}\setlength\itemsep{0.5em}
        \item[C1.] $C_1(\ttheta, \X^n) = \sumin (\sumjn A_n(i,j) \tanh(c_j))^2 = O( n \an)$,
        \item[C2.] $\sumjn \|\sumin A_n(i,j) \X_{i} \sech^2(c_i)\|^2 = O( n \an)$,
        \item[C3.] $\|\sumij \X_{i} A_n(i,j) \sech^2(c_i) \tanh(c_j)\| = O(n\an + \sqrt{n\an})$,
        \item[C4.] $\max_{i=1}^n \|\X_i\| = O(\sqrt{\log n})$.
    \end{enumerate}
   We re-emphasize that the constants in $O(\cdot)$ terms do not depend on $\ttheta$ nor $\X^n$. We will show at the end of the proof that conditions C1--C4 holds with high probability under $\X^n \sim P_{\ttheta_0, \QQ_{0,\beta,\A_n}}$.

    Let $m_i(\W^n) := \sum_{j \neq i} A_n(i,j) W_j$. Throughout this proof, we abbreviate $m_i(\W^n)$ as $m_i$. Since $\EE(W_i \mid {W_{(-i)}}) = \tanh(\beta m_i + c_i)$,
    \begin{align}
        &\EE^{\QQ_{\ttheta}} \left(\sumin \X_i W_i \right) \notag \\
        =& \EE^{\QQ_{\ttheta}} \left(\sumin \X_i \tanh(\beta m_i + c_i) \right) \notag \\
        =& \EE^{\QQ_{\ttheta}} \left(\sumin \X_i \left(\tanh(c_i) + \beta m_i \sech^2(c_i) + \frac{\beta^2 m_i^2}{2} (\sech^2)'(\beta \xi_i + c_i) \right) \right) \notag \\
        =& \sumin \X_i \tanh(c_i) + \beta \EE^{\QQ_{\ttheta}} \sumin \X_i m_i \sech^2(c_i) + \frac{\beta^2}{2} \EE^{\QQ_{\ttheta}} \sumin \X_i m_i^2 (\sech^2)'(\beta \xi_i + c_i) \label{eq:linear term high tmp} \\
        =& \sumin \X_i \tanh(c_i) + \beta \EE^{\QQ_{\ttheta}} \sumin \X_i m_i \sech^2(c_i) + O\left(n\sqrt{\log n} \an \right). \notag%
    \end{align}
    The last equality uses a union bound with C4, followed \cref{lem:RFIM moment bound}(a) with C1:
    \begin{align*}
        \|\EE^{\QQ_{\ttheta}} \sumin \X_i m_i^2 (\sech^2)'(\beta \xi_i + c_i)\| &\lesssim \sqrt{\log n} \EE^{\QQ_{\ttheta}} \sumin m_i^2 
        \lesssim \sqrt{\log n} (n \an + C_1) = O(n \sqrt{\log n} \an).
    \end{align*}
    
    Now, to conclude \eqref{eq:uniform bound with c_i}, it remains to show that $\EE^{\QQ_{\ttheta}} \sumin \X_{i} m_i \sech^2(c_i)$ 
    is $o(\sqrt{n})$.
    Using the definition of $m_i$, we can write
    \begin{align*}
        \sumin \X_{i} m_i \sech^2(c_i) &= \sumin \sum_{j=1}^n \X_{i} A_n(i,j) W_j \sech^2(c_i) = \sum_{j=1}^n \mathbf{d}_j W_j,
    \end{align*}
    where $\mathbf{d}_j:= \sumin A_n(i,j) \X_{i} \sech^2(c_i).$ 
    Then, by applying Lemma \ref{lem:RFIM moment bound}(b) (second line) we have
    \begin{align*}
        \|\EE^{\QQ_{\ttheta}} &\sum_{j=1}^n \mathbf{d}_j W_j\| \le \|\EE^{\QQ_{\ttheta}} \sum_{j=1}^n \mathbf{d}_j (W_j - \tanh(c_j))\| + %
        \|\sum_{j=1}^n \mathbf{d}_j \tanh(c_j)\| \\
        &\lesssim \sqrt{\sum_{j=1}^n \|\mathbf{d}_j\|^2} (1 + \sqrt{n \alpha_n^2} + \sqrt{C_1(\ttheta, \X^n)}) + \|\sum_{i,j} \X_{i} A_n(i,j) \sech^2(c_i) \tanh(c_j)\| \\
        &= O\big(n\an + \sqrt{n\an}\big) = o(\sqrt{n}).
    \end{align*}
    The third line uses assumptions C1-C3 to get $$\sum_{j=1}^n \|\mathbf{d}_j\|^2 \lesssim n\an, \quad C_1(\ttheta, \X^n) \lesssim n\an, \quad \|\sum_{i,j} \X_{i} A_n(i,j) \sech^2(c_i) \tanh(c_j)\| \lesssim n\an +\sqrt{n\an},$$
    and the mean-field condition $\sqrt{n}\an = o(1)$ to simplify the final bound.

    \vspace{2mm}
    Finally, we prove that C1--C4 holds with high probability, for $\X^n \sim P_{\ttheta_0, \QQ_{0,\beta,\A_n}}$. C1 and C2 follows from applying Lemma \ref{lem:conditional sum}(a) with $\phi_{1, \ttheta}(\x) := \tanh(\ttheta^\top \x)$ and $\phi_{2, \ttheta}(\x) := \x \sech^2(\ttheta^\top \x)$, respectively. Note that $\phi_{2, \ttheta}$ is vector-valued, but we can just apply Lemma \ref{lem:conditional sum}(a) for each coordinate of $\phi_2$, and sum up since $d$ is finite. Here the moment assumptions in Lemma \ref{lem:conditional sum} hold as $$\EE \left[\phi_{a, \ttheta}(\X) \mid Z \right], \quad \Var \left[\phi_{a, \ttheta}(\X) \mid Z \right]$$ can be upper bounded by absolute constants when $a=1$, and by constants that only depend on $\|\ttheta_0\|$ when $a=2$. Next, C3 follows from applying Lemma \ref{lem:conditional sum}(b) with $\phi_{1, \ttheta}$ and each coordinate of $\phi_{2, \ttheta}$. Finally, C4 follows by recalling \eqref{eq:model} to write $\max_{i=1}^n \|\X_i\| \le \|\ttheta_0\| + \max_{i=1}^n \|\mathbf{Y}_i\|$ where $\mathbf{Y}_i \equiv N_d(\mathbf{0}_d, \I)$, and applying the Gaussian maximal inequality: $\max_{i=1}^n \|\mathbf{Y}_i\| = O_p(\sqrt{\log n})$.

\vspace{2mm}
\noindent \textbf{Proof under Curie-Weiss labels at $\beta = 1$.}
Now, we prove \eqref{eq:uniform bound for sum x_i z_i} under the Curie-Weiss RFIM $\W^n \sim \QQ_{\ttheta}^{\text{CW}}$ at $\beta = 1$, for deterministic $\X^n$ that satisfy C4 above and the following condition:
\begin{enumerate}
    \item[C5.] $\sup_{\ttheta \in \Theta} \sumin \X_i \sech^2(c_i) = o(n)$.
\end{enumerate}
Under the Curie-Weiss model, the $m_i = m_i(\W^n)$'s can be written explicitly as $$m_i = \frac{1}{n} \sum_{j \neq i} W_j = \bar{W} - \frac{W_i}{n}.$$
By plugging in this formula to \eqref{eq:linear term high tmp} alongside $\beta = 1$, we get
    \begin{align*}
        &\EE^{\QQCW} \left(\sumin \X_i W_i \right) - \sumin \X_i \tanh(c_i) \\
        =& \EE^{\QQCW} \sumin \X_i m_i \sech^2(c_i) + \frac{1}{2} \EE^{\QQCW} \sumin \X_i m_i^2 (\sech^2)'( \xi_i + c_i) \label{eq:linear term high tmp} \\
        =& \Big(\sumin \X_i \sech^2(c_i)\Big) \EE^{\QQCW} [\bar{W}] - \frac{1}{n}\sumin \X_i \sech^2(c_i) \EE^{\QQCW} [W_i] + O\Big(\sumin \|\X_i\|  \EE^{\QQCW} [m_i^2] \Big) \\
        =& \Big(\sumin \X_i \sech^2(c_i)\Big) \EE^{\QQCW} [\bar{W}] + O\Big( \max_{i=1}^n \| \X_i\| \Big) + O\Big(n \max_{i=1}^n \| \X_i \| \Big[\EE^{\QQCW} [\bar{W}^2] + \frac{1}{n^2} \Big] \Big) \\
        =& o(\sqrt{n}) + O(\sqrt{\log n}) =o(\sqrt{n}) .
    \end{align*}
    In the penultimate line, we used $|W_i| = 1$ and $\|\X_i\| \le \max_{i=1}^n \| \X_i \|$. The final line used moment bounds of $\bar{W}$ from \cref{lem:W bar moment bound high tmp}, alongside conditions C4 and C5.
    
    Now, it remains to show that C4 and C5 hold with high probability for $\X^n \sim P_{\theta_0,\beta}^{\text{CW}}$. C4 follows from the exact same argument in the first segment of the proof. Recalling that $c_i = \ttheta^\top \X_i$, C5 holds because
    \begin{equation*}%
    \begin{aligned}
        \sup_{\theta \in \Theta} \Big|\frac{1}{n}\sumin \left(\X_i \sech^2(c_i) - \EE[\X_i \sech^2(c_i) \mid Z_i] \right) \Big| &\xp 0, \\
        \sup_{\theta \in \Theta} \frac{1}{n}\sumin \EE[\X_i \sech^2(c_i) \mid Z_i] = \bar{Z} \sup_{\theta \in \Theta} K(\ttheta) &\xp 0
    \end{aligned}
    \end{equation*}
    for $K(\ttheta) := \EE[\X_1 \sech^2(c_1) \mid Z_1 = 1]$. 
    Here, the first convergence follows by from the ULLN (see \cref{lem:ULLN high}). The second line uses anti-symmetry of $\x \to \x \sech^2(\ttheta^\top \x)$ to simplify the expression, followed by the LLN for the Curie-Weiss model with $\beta = 1$ to get $\bar{Z} = o_p(1)$ (e.g. see \cite{ellis1978statistics}). Note that $\|K(\ttheta)\| \le \|\ttheta_0\| + 2\sqrt{d}$ for all $\ttheta$ by a similar argument as in \eqref{eq:estimator norm bound}, and is bounded.
\end{proof}

\subsection{Proof of \cref{thm:lower bound high temperature} and \cref{cor:high tmp}}\label{sec:proof high tmp}
We prove Theorem \ref{thm:lower bound high temperature} by doing a one-term Taylor expansion of the log-likelihood ratio, and applying \cref{lem:sum x_i z_i}. %

\begin{proof}[Proof of Theorem \ref{thm:lower bound high temperature}]
Recalling the definition of the normalizing constant $Z_{n, \beta, \A_n}(\ttheta_n, \X^n)$ from \eqref{eq:rfim def}, the likelihood ratio can be simplified as
\begin{align*}
    \frac{dP_{\ttheta_n,\QQ_{0}}}{dP_{\ttheta_0, \QQ_{0}}} (\X^n) &= \frac{\sum_{\bw \in \{-1,1\}^n} \exp \left[ - \frac{n\ttheta_n^\top \ttheta_n}{2} + \frac{\beta}{2} \bw^\top \A_n \bw + \ttheta_n^\top \sumin \X_i w_i \right]}{\sum_{\bw \in \{-1,1\}^n} \exp \left[ - \frac{n\ttheta_0^\top \ttheta_0}{2} + \frac{\beta}{2}\bw^\top \A_n \bw + \ttheta_0^\top \sumin \X_i w_i \right]} \\
    &= \exp \left[ {-\frac{2 \bh^\top \ttheta_0 \sqrt{n} + \bh^\top \bh}{2}} + \log Z_{n, \beta, \A_n}(\ttheta_n, \X^n) - \log Z_{n, \beta, \A_n}(\ttheta_0, \X^n) \right].
\end{align*}
By properties of exponential families, we have
$$\frac{\partial \log Z_{n, \beta, \A_n}(\ttheta, \X^n)}{\partial \ttheta} = \EE^{\QQ_{\ttheta}} \left(\sumin \X_i W_i \right) = \sumin \X_i \tanh(\ttheta^\top \X_i) + o_p\left({\sqrt{n}}\right).$$
Here, the $o_p\left({\sqrt{n}}\right)$ term is uniform in $\ttheta$ due to assumption \eqref{eq:uniform bound for sum x_i z_i}.
Now, by the chain rule, we can write
\begin{align*}
    \log Z_{n, \beta, \A_n} &(\ttheta_n, \X^n) - \log Z_{n, \beta, \A_n}(\ttheta_0, \X^n) = \int_{0}^{\frac{1}{\sqrt{n}}} \bh^\top \left[\sumin \X_i \tanh\left((\ttheta_0 + t \bh)^\top \X_i\right) + o_p(\sqrt{n}) \right] dt      \\
    &= \sumin \log \left( \frac{\cosh(\ttheta_n^\top \X_i)}{\cosh(\ttheta_0^\top \X_i)} \right) + o_p(1) \\
    &= \frac{\bh^\top}{\sqrt{n}} \sumin \X_i \tanh(\ttheta_0^\top \X_i) + \frac{1}{2n} \bh^\top \left(\sumin \X_i \X_i^\top \sech^2(\xxi_n^\top \X_i) \right) \bh + o_p(1) \\
    &= \frac{\bh^\top}{\sqrt{n}} \sumin \X_i \tanh(\ttheta_0^\top \X_i) + \frac{1}{2} \bh^\top \EE_{\X \sim N_d(\ttheta_0, \I)} \X \X^\top \sech^2(\ttheta_0^\top \X) \bh + o_p(1).
\end{align*}
Here, the third line is due to a Taylor expansion with some error term $\xxi_n \in (\ttheta_0,\ttheta_n)$, and the last line used the limit \eqref{eq:hessian convergence}.
Finally, by combining likelihood ratio expansion and the above display, we have
\begin{align*}
    \log \frac{dP_{\ttheta_n,\QQ_{0}}}{dP_{\ttheta_0, \QQ_{0}}} &(\X^n)
    =- \bh^\top \ttheta_0 \sqrt{n} - \frac{1}{2} \bh^\top \bh +\log Z_{n, \beta, \A_n} (\ttheta_n, \X^n) - \log Z_{n, \beta, \A_n}(\ttheta_0, \X^n) \\
    =& \frac{\bh^\top}{\sqrt{n}} \sumin \Big(\X_i \tanh(\ttheta_0^\top \X_i) - \ttheta_0\Big) - \frac{1}{2} \bh^\top \Big(\I - \EE_{\X \sim N_d(\ttheta_0, \I)} \X \X^\top \sech^2(\ttheta_0^\top \X) \Big) \bh + o_p(1) \\
    =& \bh^\top \Delta_{n,\ttheta_0}(\X^n) - \frac{1}{2} \bh^\top I_0(\ttheta_0) \bh + o_p(1).
\end{align*}
Recall the definition of $\Delta_{n, \ttheta_0}$ from \eqref{eq:delta_n} and $I(\ttheta_0)$ from \cref{thm:upper bound iid}. The proof is complete as the limit distribution in \eqref{eq:delta_n} follows from Lemma \ref{lem: CLT high}.
\end{proof}

Finally, we prove Corollary \ref{cor:high tmp} using previous conclusions and Le Cam theory \citep{van2000asymptotic}.
\begin{proof}[Proof of Corollary \ref{cor:high tmp}]
    First fix $\ttheta_0 \in \Theta_1$. 
    Under our assumptions, Theorem \ref{thm:lower bound high temperature} proves that $\{P_{\ttheta, \QQ_0} \}_{\ttheta\in \Theta_1}$ is LAN, where $\beta \le 1$ is fixed.
    Then, Le Cam's first lemma (see Lemma 6.4 in \cite{van2000asymptotic}) shows that $P_{\ttheta_0,\QQ_0}$ and $P_{\ttheta_n,\QQ_{0}}$ are mutually contiguous.
    Also, note that equation \eqref{eq:high tmp iid estimator expansion} in Theorem \ref{thm:upper bound iid} allows us to write
    $$\sqrt{n}(\thetaiid - \ttheta_0) = I_0(\ttheta_0)^{-1} \Delta_{n,\ttheta_0} + o_p(1).$$
    Since \cref{lem: CLT high} gives $\Delta_{n,\ttheta_0} \xrightarrow[P_{\ttheta_0,\beta}]{d} N_d(0, I_0(\ttheta_0)),$ we have 
    \begin{align}
    \begin{pmatrix}
       \vspace{2mm}
        \sqrt{n}(\thetaiid - \ttheta_0) \\ 
        \log \frac{dP_{\ttheta_n,\QQ_{0}}}{dP_{\ttheta_0, \QQ_0}}
    \end{pmatrix} &= \begin{pmatrix}
        I_0(\ttheta_0)^{-1} \Delta_{n,\ttheta_0} \\
        \mathbf{h}^\top \Delta_{n,\ttheta_0} - \frac{1}{2}\bh^\top I_0(\ttheta_0) \bh
    \end{pmatrix} + o_p(1) \\
    & \xrightarrow[P_{\ttheta_0,\QQ_0}]{d} N_{d+1} \left(\begin{pmatrix}
        \mathbf{0}_d \\ -\frac{1}{2} \bh^\top I_0(\ttheta_0) \bh
    \end{pmatrix}, \begin{pmatrix}
        I_0(\ttheta_0)^{-1} & \bh \\ 
        \bh^\top & \bh^\top I_0(\ttheta_0) \bh
    \end{pmatrix} \right). 
    \end{align}
    Then, we can apply Le Cam's third lemma (see Theorem 6.6 in \cite{van2000asymptotic}) to get $\sqrt{n}(\thetaiid - \ttheta_0) \xrightarrow[P_{\ttheta_n,\QQ_0}]{d} N_d(\bh, I_0(\ttheta_0)^{-1})$. Now, the proof is complete by plugging in $\ttheta_n = \ttheta_0 + \frac{\bh}{\sqrt{n}}$ to adjust the centering.
\end{proof}

\begin{supplement}
\stitle{Proof of remaining Theorems}
\sdescription{We prove all low-temperature results from \cref{subsec:low temperature cw} and auxiliary lemmas.}
\end{supplement}

\bibliographystyle{imsart-number.bst} %
\bibliography{reference}       %

\begin{appendix}

\section*{Proof of remaining Theorems}
The Supplementary Material is organized as follows. In \cref{sec:proof of main results}, we prove all low-temperature results stated in \cref{subsec:low temperature cw}. 
{
We begin by introducing common Lemmas and notations throughout the proofs in \cref{subsec:additional notation}. Next, in \cref{subsec:uniqueness of low tmp minimizer proof}, we prove \cref{lem:uniqueness of minimizer}. We prove the main theorems for the upper and lower bound (Theorems \ref{thm:upper bound} and \ref{thm:lower bound}) in \cref{subsec:main thm proof low tmp} and \cref{subsec:proof of lower bound low}, respectively.
}

{
We prove all remaining Lemmas in \cref{sec:proof of lemmas}, where we first prove the high/low temperature ULLN and CLTs in \cref{subsec:ulln clt proof}. We prove the high temperature concentration results for Ising models on general graphs (Lemmas \ref{lem:RFIM moment bound} and \ref{lem:conditional sum}) in \cref{subsec:Ising model concentration}.
We prove the concentration results specific to the random field Curie-Weiss model (Lemmas \ref{lem: u n consistency}, \ref{lem:W bar moment bound low tmp}, and \ref{lem:W bar moment bound high tmp}) in \cref{subsec:rfim concentration}. Finally, we prove \cref{lem:variance simplification} in \cref{subsec:proof of variance matching}.
}

\subsection{Proof of results in \cref{subsec:low temperature cw}}\label{sec:proof of main results}

\subsubsection{Additional Lemmas and notations}\label{subsec:additional notation}
We first state the low temperature analogs for the conditional LLN and CLTs that we saw in Lemmas \ref{lem:ULLN high} and \ref{lem: CLT high}. These results will be used multiple times throughout \cref{sec:proof of main results}. We defer the proofs of these Lemmas to Section \ref{sec:proof of lemmas}. 

We first state the low-temperature ULLN. Note that this immediately implies the non-uniform LLN in \cref{lem:conditional limit }.

\begin{lemma}[low temperature ULLN]\label{lem:ULLN}
Suppose $\beta > 1$, and that $\A_n$ satisfy Assumptions \ref{defi:mean-field} and \ref{defi:regularity}. Let $\X^n \sim P_{\ttheta_0, \beta, \A_n}$.  For a $k$-dimensional compact set $\Psi$, let $f:\mathbb{R}^d \times \Psi \to \mathbb{R}$ be a bivariate function that satisfies all conditions given in \cref{lem:ULLN high}. Then, we have $\PP(\bar{Z} <0 : \bar{\X} \in \Theta_1) \to 0$ and 
    $$\sup_{\psi \in \Psi} \left| \frac{1}{n} \sumin f(\X_i, \psi) - \EE_{\ttheta_0} f(\X, \psi) \right| : (\bar{\X} \in \Theta_1) \xp 0.$$
    Similarly, we have $\PP(\bar{Z} >0 : \bar{\X} \in \Theta_2) \to 0$ and
    $$\sup_{\psi \in \Psi} \left| \frac{1}{n} \sumin f(\X_i, \psi) - \EE_{-\ttheta_0} f(\X, \psi) \right| : (\bar{\X} \in \Theta_2) \xp 0.$$
    The above conclusions also hold when $f$ is vector-valued (say, $k'$-dimensional for some finite $k'$) and the absolute value is replaced by any vector norm.
\end{lemma}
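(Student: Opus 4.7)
The plan is to split the lemma into two pieces: first, showing $\PP(\bar{Z} < 0 : \bar{\X} \in \Theta_1) \to 0$, and second, establishing the uniform convergence conditional on $\{\bar{\X} \in \Theta_1\}$. A clean overarching observation is that the joint law of $(\Z^n, \X^n)$ is invariant under the global sign flip that sends $(\Z, \X)$ to $(-\Z, -\X)$: the Ising model $\QQ_{0,\beta,\A_n}$ is symmetric under spin flip, and $-\X_i$ conditional on $-Z_i$ is again distributed as $N_d(\ttheta_0 Z_i, \I)$. Consequently $\bar{\X}$ and $-\bar{\X}$ have the same distribution, so $\PP(\bar{\X} \in \Theta_1) = \PP(\bar{\X} \in \Theta_2) = \tfrac{1}{2}$ exactly, and conditioning on $\{\bar{\X} \in \Theta_1\}$ inflates probabilities by at most a factor of $2$.

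For the first claim I will invoke the Ising-model phase transition from \cite{deb2023fluctuations}: under Assumptions \ref{defi:mean-field} and \ref{defi:regularity} with $\beta > 1$, for any small $\delta > 0$ one has $\PP(|\bar{Z}| \notin [m-\delta, m+\delta]) \to 0$. Writing $\bar{\X} = \bar{Z}\,\ttheta_0 + O_p(n^{-1/2})$ (the residual $\tfrac{1}{n}\sumin (\X_i - \ttheta_0 Z_i)$ has law $N_d(\mathbf{0}_d, \tfrac{1}{n}\I)$), on the event $\{\bar{Z} < 0\}$ the vector $\bar{\X}$ is within $o_p(1)$ of $-m\,\ttheta_0$, which by the definition of $\Theta_1$ lies in the interior of $\Theta_2$. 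Choosing $\delta$ small enough so that a Euclidean ball around $-m\ttheta_0$ is contained in $\Theta_2$ then yields $\PP(\bar{Z}<0,\,\bar{\X} \in \Theta_1) \to 0$, and dividing by $\PP(\bar{\X} \in \Theta_1) = \tfrac{1}{2}$ closes the first claim.

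For the ULLN part, the key observation is that $f(\cdot, \psi)$ is even in $\x$, so $\EE[f(\X_i, \psi) \mid Z_i]$ does not depend on the sign of $Z_i$ and equals $\EE_{\X \sim N_d(\ttheta_0, \I)} f(\X, \psi)$; unpacking $\PP_{\ttheta_0} = \tfrac{1+m}{2} N_d(\ttheta_0, \I) + \tfrac{1-m}{2} N_d(-\ttheta_0, \I)$ and using evenness once more, this equals $\EE_{\ttheta_0} f(\X, \psi)$. Since Lemma \ref{lem:ULLN high} imposes no structure on $\QQ_0$, it applies verbatim to give the unconditional convergence
$$\sup_{\psi \in \Psi} \left| \frac{1}{n} \sumin f(\X_i, \psi) - \EE_{\ttheta_0} f(\X, \psi) \right| \xp 0.$$
Dividing by $\PP(\bar{\X} \in \Theta_1) = \tfrac{1}{2}$ upgrades this to convergence in conditional probability, and the $\{\bar{\X} \in \Theta_2\}$ statement follows from the same global sign-flip symmetry by replacing $\ttheta_0$ with $-\ttheta_0$ throughout.

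The main obstacle I anticipate is the geometric separation step for the first claim, namely that $-m\ttheta_0$ sits a fixed Euclidean distance away from $\Theta_1$. Generically $\theta_{0,1} > 0$ and a ball around $-m\ttheta_0$ comfortably lies in $\Theta_2$, so $O_p(n^{-1/2})$ fluctuations of $\bar{\X}$ cannot move it into $\Theta_1$; however, the stratified definition of $\Theta_1$ formally allows $\ttheta_0$ to lie on a lower-dimensional face where the leading coordinates of $\bar{\X}$ are pure noise at scale $n^{-1/2}$, which would require a more delicate argument coordinate by coordinate. Apart from this mild technicality, the proof is largely bookkeeping built on the phase transition from \cite{deb2023fluctuations} and the already-established Lemma \ref{lem:ULLN high}.
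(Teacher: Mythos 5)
Your plan breaks on a misreading of the hypotheses. The phrase ``satisfies all conditions given in Lemma~\ref{lem:ULLN high}'' refers to the \emph{three bullet-point moment conditions} only, not to evenness of $f$ in $\x$ — and the distinction is load-bearing. Three pieces of evidence: (i) the paper's combined proof of Lemmas~\ref{lem:ULLN high} and~\ref{lem:ULLN} explicitly says the conditional concentration bound \eqref{eq:LLN conditional concentration} holds ``under the three conditions in Lemma~\ref{lem:ULLN high}'' and only invokes evenness in the proof of the high-temperature lemma; (ii) in the proof of Lemma~\ref{lem:ULLN} the paper bounds $\sup_\psi |g(1,\psi) - g(-1,\psi)| \le 2C_2$ rather than deducing it is zero, which would be the evenness consequence; (iii) every downstream application — for example to $f(\x,(u,\ttheta)) = \log\cosh(\beta u + \ttheta^\top\x)$ in the consistency proof of $\thetamf$ and in Lemma~\ref{lem:conditional limit } — has $f$ \emph{not} even in $\x$ whenever $u \neq 0$. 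Indeed, if $f$ were even the lemma would be vacuous: $\EE_{\ttheta_0}f = \EE_{-\ttheta_0}f = \EE_{N_d(\ttheta_0,\I)}f$, both conditional conclusions collapse to the unconditional one, and there would be no point in a separate low-temperature ULLN.

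Once evenness is dropped, your strategy of applying Lemma~\ref{lem:ULLN high} unconditionally and then ``dividing by $\PP(\bar{\X}\in\Theta_1) = 1/2$'' cannot work, because the unconditional empirical average has no deterministic limit: writing $g(z,\psi) := \EE[f(\X,\psi)\mid Z=z]$, the quantity $\frac{1}{n}\sumin f(\X_i,\psi)$ tracks $\frac{1+\bar{Z}}{2}g(1,\psi) + \frac{1-\bar{Z}}{2}g(-1,\psi)$, which concentrates near $\EE_{\ttheta_0}f$ on $\{\bar{Z}\approx m\}$ and near $\EE_{-\ttheta_0}f$ on $\{\bar{Z}\approx -m\}$, each occurring with probability $1/2$. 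The conditioning on $\bar{\X}\in\Theta_1$ versus $\Theta_2$ is precisely what selects one branch, and this requires the first part of the lemma as a genuine input. The paper's argument therefore goes through the decomposition
\begin{equation*}
\frac{1}{n}\sumin f(\X_i,\psi) - \EE_{\ttheta_0}f(\X,\psi) = \frac{1}{n}\sumin\bigl[f(\X_i,\psi)-g(Z_i,\psi)\bigr] + \frac{\bar{Z}-m}{2}\bigl(g(1,\psi)-g(-1,\psi)\bigr),
\end{equation*}
controls the first term by the conditional concentration inequality (valid for any $\QQ_0$), and controls the second term by the phase transition of $\bar{Z}$ from \cite{deb2023fluctuations} conditional on $\bar{Z}>0$, combined with the link $\PP(\bar{Z}<0 : \bar{\X}\in\Theta_1)\to 0$. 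Your instinct for the first part ($\PP(\bar{Z}<0:\bar{\X}\in\Theta_1)\to 0$) matches the paper's argument, and your flag about the lower-dimensional strata of $\Theta_1$ — where $\theta_{0,1}=\cdots=\theta_{0,j-1}=0$ and the leading coordinates of $\bar{\X}$ are pure Gaussian noise at scale $n^{-1/2}$ — is a genuine concern (the paper's own proof is silent on it and effectively needs $\theta_{0,1}>0$), but that is secondary to the evenness gap above.
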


The following Lemma computes the limiting distribution of the statistic $\sqrt{n} (\nabla M_n) (m, \ttheta_0)$, where the function $M_n$ is introduced in \eqref{eq:mean-field estimator definition}. %

\begin{lemma}[low temperature CLT]\label{lem: CLT}
Suppose $\beta > 1$, and that $\A_n$ satisfy Assumptions \ref{defi:mean-field} and \ref{defi:regularity}. Let $\X^n \sim P_{\ttheta_0, \beta, \A_n}$.  Then, we have
\begin{align*}
    \sqrt{n} (\nabla M_n) (m, \ttheta_0) : (\bar{\X} \in \Theta_1) &\xd N_{d+1}\left(\mathbf{0}_{d+1}, \Sigma \right), \\
    \sqrt{n} (\nabla M_n) (-m, \ttheta_0) : (\bar{\X} \in \Theta_2) &\xd N_{d+1}\left(\mathbf{0}_{d+1}, \tilde{\Sigma} \right).
\end{align*}
Here, $\Sigma$ and $\tilde{\Sigma}$ are $(d+1) \times (d+1)$ matrices that will be defined below in \cref{defi:sigma}(c).
\end{lemma}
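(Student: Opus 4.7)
The plan is to decompose $\sqrt{n}(\nabla M_n)(m, \ttheta_0)$ into two asymptotically independent pieces -- a magnetization fluctuation $\sqrt{n}(\bar{Z} - m)$ and a conditional Gaussian noise -- and then combine their Normal limits. I sketch the argument conditional on $\bar{\X} \in \Theta_1$; the case for $\Theta_2$ follows by the symmetry $\X_i \mapsto -\X_i$, which flips $\Z^n \mapsto -\Z^n$ by invariance of the Ising distribution.

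Using $\X_i = Z_i \ttheta_0 + \boldsymbol{\epsilon}_i$ with $\boldsymbol{\epsilon}_i \overset{\textnormal{iid}}{\sim} N_d(\mathbf{0}_d, \I)$ independent of $\Z^n$, set $F(\x) := \tanh(\beta m + \ttheta_0^\top \x)$, $\mathbf{G}(\x) := \x F(\x)$, and $g_\pm := \EE[F(\pm\ttheta_0 + \boldsymbol{\epsilon})]$, $\mathbf{G}_\pm := \EE[\mathbf{G}(\pm\ttheta_0 + \boldsymbol{\epsilon})]$. The first-order identities from \cref{lem:uniqueness of minimizer} read $\tfrac{1+m}{2}g_+ + \tfrac{1-m}{2}g_- = m$ and $\tfrac{1+m}{2}\mathbf{G}_+ + \tfrac{1-m}{2}\mathbf{G}_- = \ttheta_0$. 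Since $\tfrac{1}{n}\sumin \EE[F(\X_i)\mid Z_i] = \tfrac{g_++g_-}{2} + \bar{Z}\,\tfrac{g_+-g_-}{2}$ (and analogously for $\mathbf{G}$), subtracting $m$ and $\ttheta_0$ yields the exact decomposition
\[
\sqrt{n}(\nabla M_n)(m, \ttheta_0) = -\mathbf{a}\,\sqrt{n}(\bar{Z} - m) \;-\; \mathbf{N}_n,
\]
where $\mathbf{a}\in\mathbb{R}^{d+1}$ has first coordinate $\beta(g_+-g_-)/2$ and remaining coordinates $(\mathbf{G}_+-\mathbf{G}_-)/2$, and $\mathbf{N}_n$ stacks the centered noise sums $\tfrac{\beta}{\sqrt{n}}\sumin(F(\X_i) - \EE[F(\X_i)\mid Z_i])$ and $\tfrac{1}{\sqrt{n}}\sumin(\mathbf{G}(\X_i) - \EE[\mathbf{G}(\X_i)\mid Z_i])$.

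Conditional on $\Z^n$ the summands of $\mathbf{N}_n$ are independent functions of the $\boldsymbol{\epsilon}_i$'s alone (hence independent of $\bar{Z}$), and are uniformly bounded in higher moments since $F,\mathbf{G}$ are Lipschitz of at most polynomial growth and $\boldsymbol{\epsilon}_i$ are Gaussian. Applying the Lindeberg-Feller CLT conditionally on $\Z^n$, together with \cref{lem:ULLN} to pin down the conditional variances in terms of the mixture $\PP_{\ttheta_0}$ (valid since $\bar{Z}\xp m$ on $\bar{\X}\in\Theta_1$ by the first part of the Lemma), gives $\mathbf{N}_n \mid \Z^n \xd N_{d+1}(\mathbf{0}, \Xi)$ for an explicit mixture covariance $\Xi$; the conditional independence of $\mathbf{N}_n$ from $\bar{Z}$ upgrades to asymptotic independence of the unconditional limits. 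For the magnetization, the low-temperature CLT of \cite{deb2023fluctuations}, valid under our mean-field, approximately regular, and well-connected assumptions, gives $\sqrt{n}(\bar{Z}-m)\mid \{\bar{Z}>0\} \xd N(0,\tau^2)$ for some $\tau^2 = \tau^2(\beta,\A_n)$. Since $\bar{\X} = \bar{Z}\ttheta_0 + O_p(n^{-1/2})$ and $|\bar{Z}|\to m > 0$, the events $\{\bar{\X}\in\Theta_1\}$ and $\{\bar{Z}>0\}$ agree with probability $1-o(1)$, so the CLT transfers. Combining the two independent Normal limits produces a $(d+1)$-dimensional Normal limit with covariance $\Sigma = \tau^2\,\mathbf{a}\mathbf{a}^\top + \Xi$.

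The main obstacle is the precise transfer between conditioning on $\bar{Z}>0$ (where the magnetization CLT is classically available) and conditioning on $\bar{\X}\in\Theta_1$ at the $1/\sqrt{n}$ scale. Although the two events coincide up to an asymptotically negligible set, controlling the joint distribution of $(\bar{Z},\mathbf{N}_n)$ under the conditioning on $\bar{\X}\in\Theta_1$ requires low-temperature moment estimates on the posterior label distribution (the RFIM) with weak external fields, and these are only tight in the Curie-Weiss case -- which is precisely why the companion lower bound (\cref{thm:lower bound}) is stated only for Curie-Weiss labels while the upper bound (\cref{thm:upper bound}) holds more broadly.
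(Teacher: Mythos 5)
Your decomposition is essentially the paper's: you split $\sqrt{n}(\nabla M_n)(m,\ttheta_0)$ into a magnetization fluctuation $\sqrt{n}(\bar{Z}-m)$ times a fixed vector plus a conditionally centered noise, apply the conditional CLT to the noise and the Ising CLT of \cite{deb2023fluctuations} to the magnetization, and combine. The paper reduces to scalars via Cram\'er--Wold first, but that is cosmetic; the plan is the same.

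Your closing paragraph, however, manufactures an obstacle that is not there and as a result casts unwarranted doubt on the lemma as stated. You claim that "controlling the joint distribution of $(\bar{Z},\mathbf{N}_n)$ under the conditioning on $\bar{\X}\in\Theta_1$ requires low-temperature moment estimates on the posterior label distribution (the RFIM)." This conflates two unrelated kinds of conditioning. Conditioning on the \emph{event} $\bar{\X}\in\Theta_1$, which has probability $\to 1/2$, is not the same as passing to the posterior $\QQ_{\ttheta}(\W^n\mid\X^n)$. The transfer from $\{\bar Z>0\}$ to $\{\bar{\X}\in\Theta_1\}$ is elementary: $\PP(\bar{\X}\in\Theta_1,\bar Z<0)\to0$ (and vice versa) because $\bar{\X}\mid\bar Z\equiv\ttheta_0\bar Z+N_d(\mathbf{0},n^{-1}\I)$ concentrates near $\ttheta_0\bar Z$ and $|\bar Z|\to m>0$; since both events have limiting probability $1/2$, the conditional law of any $\X^n$-measurable statistic is the same under either conditioning in the limit --- this is exactly \cref{lem:X > 0, Z > 0}. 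No RFIM moment bounds appear anywhere in the proof of \cref{lem: CLT}. The RFIM estimates that are hard at low temperature (and are proved only in the Curie--Weiss case) are needed for the \emph{lower bound}, \cref{thm:lower bound}, because there one must analyze the posterior label distribution $\W^n\mid\X^n$. That difficulty is completely separate, and \cref{lem: CLT} genuinely holds for all mean-field, approximately regular, well-connected $\A_n$.

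A smaller gap: the combination of the conditional CLT for $\mathbf{N}_n\mid\Z^n$ with the unconditional CLT for $\sqrt{n}(\bar Z-m)$ requires care because the conditional variance of $\mathbf{N}_n$ given $\Z^n$ itself depends on $\bar Z$, so the two terms are not independent at finite $n$. The paper normalizes the noise by its random conditional standard deviation (so the conditional limit is exactly standard normal), adds the pieces via the "converging together" lemma (\cref{lem:converging together}), and then removes the normalization by Slutsky. Your phrase "upgrades to asymptotic independence of the unconditional limits" gestures at the right idea but omits this bookkeeping, which is where the mixed variance $\tau + \frac{\bar Z-m}{2}(\tau_1-\tau_{-1})$ must be shown to converge to $\tau$ on the conditioning event.
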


Next, we introduce additional notations, which are required to explicitly state the limiting variance $\Sigma$ as well as simplify further computations.
\begin{defi}\label{defi:sigma}
Given $\beta > 1$ and $\ttheta_0 \in \Theta_1$, we define the following the quantities.
\begin{enumerate}[(a),itemsep=2mm]
    \item 
    For $z = \pm 1$, let
\begin{align*}
    \mu_z := \EE\big[\tanh(\beta m + \ttheta_0^{\top} \X) \mid Z=z\big], \quad \boldsymbol{\nu}_z := \EE\big[\X \tanh(\beta m + \ttheta_0^{\top} \X) \mid Z=z\big].
\end{align*}

\item Define each component of the gradient $\nabla M_n$ by setting
    \begin{align*}
        F_1(u, \ttheta) :&= \beta \left(u - \frac{1}{n} \sumin \tanh(\beta u + \ttheta^{\top} \X_i) \right), \\
        F_2(u, \ttheta) :&= \ttheta- \frac{1}{n} \sumin \X_i \tanh(\beta u + \ttheta^{\top} \X_i),
    \end{align*}
    so that $\nabla M_n = \begin{pmatrix} F_1 \\ F_2 \end{pmatrix}$.

\item Define a $(d+1) \times (d+1)$ matrix $\Sigma = \begin{pmatrix}
    \sigma_{1,1} & \ssigma_{1,2}^{\top} \\
    \ssigma_{1,2} & \ssigma_{2,2}
\end{pmatrix}$ as
\begin{align*}
    \Sigma &:= \EE_{Z \sim \text{Rad}(\frac{1+m}{2})} \Big[\Var \Big( \begin{pmatrix}
    \beta \\ \X
\end{pmatrix}\tanh(\beta m + \ttheta_0^\top \X) \mid Z \Big) \Big] + \frac{C(\beta)}{4} \begin{pmatrix}
    \beta(\mu_1 - \mu_{-1}) \\ \boldsymbol{\nu}_1 - \boldsymbol{\nu}_{-1}
\end{pmatrix} \begin{pmatrix}
    \beta(\mu_1 - \mu_{-1}) \\ \boldsymbol{\nu}_1 - \boldsymbol{\nu}_{-1}
\end{pmatrix}^\top.
\end{align*}
Here, $C(\beta) := \frac{1-m^2}{1-\beta(1-m^2)}$ is the limiting variance of $\bar{Z}$ under the Curie-Weiss model (see \cref{lem:Ising CLT}). Also define $\tilde{\Sigma} := \begin{pmatrix}
        \sigma_{1,1}   & -\ssigma_{1,2}^\top \\
        - \ssigma_{1,2} & \ssigma_{2,2}
    \end{pmatrix}$.

\item Define constants $\alpha_0 \in \mathbb{R}, \alpha_1 \in \mathbb{R}^d, \alpha_2 \in \mathbb{R}^{d\times d}$ by
\begin{align*}
    \alpha_0 &:= \EE_{\ttheta_0} \sech^2(\beta m + \ttheta_0^{\top} \X), \\
    \alpha_1 &:= \EE_{\ttheta_0} \X \sech^2(\beta m + \ttheta_0^{\top} \X), \\
    \alpha_2 &:= \EE_{\ttheta_0} \X \X^\top \sech^2(\beta m + \ttheta_0^{\top} \X).
\end{align*}
\end{enumerate}
\end{defi}

\subsubsection{Proof of Lemma \ref{lem:uniqueness of minimizer}}\label{subsec:uniqueness of low tmp minimizer proof}
\begin{proof}[Proof of Lemma \ref{lem:uniqueness of minimizer}]
The proof proceeds by a KL divergence argument similar to \cref{lem:uniqueness iid}. Fix any $\beta > 1$. For any $u \in (-1, 1)$ and $ \ttheta \in \Theta_1$, define a distribution $$\PP_{u, \ttheta} \equiv \frac{e^{\beta u}}{e^{\beta u}+e^{-\beta u}} N_d(\ttheta, \I) + \frac{e^{-\beta u}}{e^{\beta u}+e^{-\beta u}} N_d(-\ttheta, \I),$$ which has density $$p_{u, \ttheta}(\x) = \frac{\exp \left[ - \frac{\x^{\top} \x}{2} - \frac{\ttheta^\top \ttheta}{2} + \log \cosh(\beta u + \ttheta^{\top} \x) \right]}{(\sqrt{2\pi})^d \cosh(\beta u)}.$$
Note that $\{\PP_{u, \ttheta}: \ttheta\in \Theta_1 \}$ is an identifiable family, which is immediate by writing out the first two moments. Hence, for any $(u,\ttheta) \neq (m, \ttheta_0)$, 
\begin{align*}
    0 < \text{KL}(\PP_{m, \ttheta_0} \| \PP_{u, \ttheta}) = \EE^{\PP_{m, \ttheta_0}} \Big[&- \frac{\ttheta_0^\top \ttheta_0}{2} + \log \cosh(\beta m + \ttheta_0^{\top} \X) - \log \cosh(\beta m) \\
    & + \frac{\ttheta^\top \ttheta}{2}- \log \cosh(\beta u + \ttheta^{\top} \X)  + \log \cosh(\beta u) \Big].
\end{align*}

Now setting a function $g(u) := - \frac{\beta u^2}{2} + \log \cosh(\beta u)$, we can write
\begin{align*}
    & M_{\infty}(u, \ttheta) - M_{\infty}(m, \ttheta_0) \\
    =& \frac{\beta (u^2 - m^2)}{2} + \frac{\ttheta^\top \ttheta - \ttheta_0^\top \ttheta_0}{2} - \EE^{\PP_{m, \ttheta_0}} \log \cosh(\beta u + \ttheta^\top \X) + \EE^{\PP_{m, \ttheta_0}} \log \cosh(\beta m + \ttheta_0^\top \X) \\
    =& \text{KL}(\PP_{m, \ttheta_0} \| \PP_{u, \ttheta}) - g(u) + g(m) > -g(u)+g(m).
\end{align*}
Hence, to show the RHS is positive, it suffices to prove $g(m) \ge g(u)$ for all $u$. Standard calculus shows that $g$ is a symmetric function with $g'(u) > 0$ for $0 < u<m$ and $g'(u) < 0$ for $u >m$, and hence maximized at $u = \pm m$ (e.g. see pg. 144-145 in \cite{dembo2010gibbs}). This completes the proof. %

\end{proof}

\begin{remark}
    One immediate consequence of \cref{lem:uniqueness of minimizer} is that $(\nabla M_{\infty}) (m, \ttheta_0) = 0$, i.e.
    \begin{equation}\label{eq:m,theta identies}
        m = \EE_{\ttheta_0} \tanh(\beta m + \ttheta_0^\top \X), ~ \ttheta_0 = \EE_{\ttheta_0} \X \tanh(\beta m + \ttheta_0^\top \X).
    \end{equation}
    These identities can also be proved directly by using the definition of $\EE_{\ttheta_0}$ and the fact that $m = \tanh(\beta m)$.
    However, unlike \cref{lem:uniqueness iid}, multiple solutions of $\nabla M_{\infty} = 0$ may exist.
\end{remark}

\subsubsection{Proof of Theorem \ref{thm:upper bound}}\label{subsec:main thm proof low tmp}
{We prove Theorem \ref{thm:upper bound} by modifying the usual argument for deriving asymptotic normality of M-estimators. One subtlety arises in terms of simplifying the limiting variance as $I_\beta(\ttheta_0)^{-1}$.} This involves nontrivial computations, which we formally state in the following Lemma. Note that part (b) also establishes the invertibility of $I_\beta(\ttheta_0)$.

\begin{lemma}\label{lem:variance simplification}
    Under the notations from \cref{defi:info} and \cref{defi:sigma}, the following holds.
    \begin{enumerate}[(a)]
        \item $1 - \beta \alpha_0 > 0$ and $\gamma_{1,1} > 0$.
        \item For $\ddelta := \frac{\ggamma_{1,2}}{\gamma_{1,1}}$,
        \begin{align}\label{eq:variance information eq}
        I_\beta(\ttheta_0) = \ddelta \sigma_{1,1} \ddelta^{\top} - \ssigma_{1,2} \ddelta^{\top} - \ddelta \ssigma_{1,2}^{\top} + \ssigma_{2,2} \succ 0.
\end{align}
\end{enumerate}
\end{lemma}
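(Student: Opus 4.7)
The plan is to prove (a) by a short Jensen-plus-stability argument, and (b) by first establishing the algebraic identity via repeated Gaussian integration by parts and then deducing positive definiteness from a Hessian-of-KL decomposition of $\Gamma$; I expect the identity in (b) to be the main obstacle. For (a), I would rewrite $\alpha_0 = 1 - \EE_{\ttheta_0}\tanh^2(\beta m + \ttheta_0^\top \X)$ using $\sech^2 = 1 - \tanh^2$. Jensen's inequality applied to $\phi(t) = t^2$, combined with the fixed point $m = \EE_{\ttheta_0}\tanh(\beta m + \ttheta_0^\top \X)$ from \eqref{eq:m,theta identies}, gives $\alpha_0 \leq 1 - m^2 = \sech^2(\beta m)$. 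For $\beta > 1$, the standard stability of the positive root $m$ of $u = \tanh(\beta u)$ (the graph of $\tanh(\beta\cdot)$ crosses the diagonal from above to below at $m$, so $\beta \sech^2(\beta m) < 1$) then yields $\beta \alpha_0 \leq \beta(1 - m^2) < 1$, hence $\gamma_{1,1} = \beta(1 - \beta \alpha_0) > 0$.

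For the identity in (b), observe first that with $\ddelta = \ggamma_{1,2}/\gamma_{1,1}$, the claim is equivalent to $I_\beta(\ttheta_0) = M^\top \Sigma M$ with $M = \begin{pmatrix}-\ddelta^\top \\ \I_d\end{pmatrix}$; expanding both sides this becomes
\[
\ggamma_{2,2} - \ssigma_{2,2} = (\gamma_{1,1} + \sigma_{1,1})\,\ddelta\ddelta^\top - \ddelta \ssigma_{1,2}^\top - \ssigma_{1,2}\ddelta^\top.
\]
To verify this, I would use Stein's identity on the conditionals $\X \mid Z = z \sim N_d(z\ttheta_0, \I)$ to rewrite $\alpha_1$, $\alpha_2$, and $\boldsymbol{\nu}_z$ in terms of $\mu_z$ and the conditional quantities $\alpha_0^z := \EE[\sech^2(\beta m + \ttheta_0^\top \X) \mid Z = z]$; for instance $\boldsymbol{\nu}_z = z\ttheta_0 \mu_z + \ttheta_0 \alpha_0^z$, with analogous formulas for $\alpha_1$ and $\alpha_2$. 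The two fixed-point identities in \eqref{eq:m,theta identies} then translate to the scalar identity $\EE_Z[Z\mu_Z] + \alpha_0 = 1$ and a vector companion for $\alpha_1$. Combined with the explicit form $C(\beta) = (1 - m^2)/(1 - \beta(1 - m^2))$ and the simplification $(\mu_1 - \mu_{-1})^2 = 4\Var_Z(\mu_Z)/(1 - m^2)$, both sides should collapse, with the rank-one $C(\beta)$-correction in $\Sigma$ supplying exactly the missing cross term. I anticipate this matching step to be the main obstacle, since the bookkeeping required is delicate.

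For positive definiteness, I would take Hessians at $(m, \ttheta_0)$ in the KL decomposition
\[
M_\infty(u, \ttheta) - M_\infty(m, \ttheta_0) = \text{KL}(\PP_{m, \ttheta_0} \,\|\, \PP_{u, \ttheta}) - g(u) + g(m)
\]
from the proof of \cref{lem:uniqueness of minimizer}, obtaining $\Gamma = \tilde{I} + \mathrm{diag}(-g''(m), \mathbf{0}_{d\times d})$, where $\tilde{I}$ is the Fisher information of the family $\{\PP_{u,\ttheta}\}$ at $(m, \ttheta_0)$ (by Bartlett's identity) and $-g''(m) = \beta(1 - \beta(1 - m^2)) > 0$ by the stability inequality used in part (a). A direct score-independence check (any nontrivial combination $c\, \partial_u \log p_{u,\ttheta} + \mathbf{a}^\top \partial_\ttheta \log p_{u,\ttheta} \equiv 0$ at $(m, \ttheta_0)$ would force $\tanh(\beta m + \ttheta_0^\top \x)(c\beta + \mathbf{a}^\top \x) = c\beta m + \mathbf{a}^\top \ttheta_0$ for all $\x$; varying $\x$ along directions perpendicular to $\ttheta_0$ forces $\mathbf{a} \parallel \ttheta_0$, and then sending $\ttheta_0^\top \x \to +\infty$ forces $\mathbf{a} = \mathbf{0}$ and $c = 0$) shows $\tilde{I} \succ 0$, so $\Gamma \succ 0$. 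Since $I_\beta(\ttheta_0)$ is the Schur complement of the positive scalar $\gamma_{1,1}$ in $\Gamma$, we conclude $I_\beta(\ttheta_0) \succ 0$.
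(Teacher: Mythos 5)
Your arguments for part (a) and for the positive-definiteness in part (b) are both correct and take genuinely different (and arguably cleaner) routes than the paper. For (a), the paper first establishes the Stein-based identity $\alpha_0 = (1-m^2)(1-\tfrac{\mu_1-\mu_{-1}}2)$, then uses $\mu_1>\mu_{-1}$ to conclude $1-\beta\alpha_0 > 1-\beta(1-m^2) > 0$; your Jensen argument ($\EE_{\ttheta_0}\tanh^2 \ge m^2$, hence $\alpha_0\le 1-m^2 = \sech^2(\beta m)$) combined with stability of the fixed point $m$ reaches the same conclusion with less machinery. For the positive definiteness, the paper verifies it \emph{after} establishing the identity, by rewriting the right-hand side of \eqref{eq:variance information eq} as $\EE[\Var(\cdot\mid Z)] + C(\beta)(\w_1-\w_{-1})(\w_1-\w_{-1})^\top$; your route — taking Hessians in the KL-plus-$g$ decomposition from the proof of \cref{lem:uniqueness of minimizer} to obtain $\Gamma = \tilde{I} + \operatorname{diag}(-g''(m),\mathbf{0})$, checking score independence by sending $\ttheta_0^\top\x\to\infty$, and then reading off $I_\beta(\ttheta_0)\succ 0$ from the Schur complement of the positive-definite $\Gamma$ — is independent of the identity and is a neat structural argument that the paper does not use.

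However, there is a genuine gap at the center of part (b): the algebraic identity \eqref{eq:variance information eq} itself is only sketched. You correctly reduce it to $\ggamma_{2,2}-\ssigma_{2,2} = (\gamma_{1,1}+\sigma_{1,1})\ddelta\ddelta^\top - \ddelta\ssigma_{1,2}^\top - \ssigma_{1,2}\ddelta^\top$ and identify the right tools (Stein identities on the conditionals, e.g.\ $\boldsymbol\nu_z = z\ttheta_0\mu_z + \ttheta_0\alpha_0^z$, together with the fixed-point relations and the explicit $C(\beta)$), but you stop at ``both sides should collapse'' and flag the matching as the main obstacle. That matching step is precisely the bulk of the paper's proof: one needs the two Stein-based identities $\alpha_0 = (1-m^2)(1-\tfrac{\mu_1-\mu_{-1}}2)$ and $\alpha_1 = -\tfrac{1-m^2}{2}(\boldsymbol\nu_1-\boldsymbol\nu_{-1})$, and then a careful regrouping via the auxiliary vectors $\w_{\pm 1} := \boldsymbol\nu_{\pm 1} - \beta\ddelta\,\mu_{\pm 1}$ so that every term lands in a rank-one form; the final reduction is to the scalar equality $\alpha_0 = (1-m^2)(1-\tfrac{\mu_1-\mu_{-1}}2)$, and nothing short of carrying this out establishes \eqref{eq:variance information eq}. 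As written, the proposal asserts the equality but does not prove it, so the lemma is not yet established.
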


\begin{proof}[Proof of Theorem \ref{thm:upper bound}]
    The positive definiteness of $I_\beta(\ttheta_0)$ follows from \cref{lem:variance simplification}(b). To prove the desired CLT for $\thetamf$, we claim more general \textit{joint CLTs} for $(\hat{U}_n, \thetamf)$:
    \begin{align}
        \sqrt{n} \begin{pmatrix} \hat{U}_n - m \\ \thetamf - \ttheta_0 \end{pmatrix} : (\bar{\X} \in \Theta_1) \xd N_{d+1} \left(0, \Gamma^{-1} \Sigma \Gamma^{-1} \right), \label{eq:CLT_upper bound}\\
        \sqrt{n} \begin{pmatrix} \hat{U}_n + m \\ \thetamf - \ttheta_0 \end{pmatrix} : (\bar{\X} \in \Theta_2) \xd N_{d+1} \left(0, \tilde{\Gamma}^{-1} \tilde{\Sigma} \tilde{\Gamma}^{-1} \right). \label{eq:CLT_upper bound 2}
    \end{align}
    Here, $\Gamma = \begin{pmatrix}
        \gamma_{1,1} & \ggamma_{1,2}^\top \\
        \ggamma_{2,1} & \ggamma_{2,2}
    \end{pmatrix}$ is the $(d+1)\times (d+1)$ matrix in \cref{defi:info}, and we define $\tilde{\Gamma} := \begin{pmatrix}
        \gamma_{1,1} & -\ggamma_{1,2}^\top \\
        - \ggamma_{2,1} & \ggamma_{2,2}
    \end{pmatrix}$ as a modification. Also recall $(d+1)\times (d+1)$ matrices $\Sigma, \tilde{\Sigma}$ from part (c) of \cref{defi:sigma}.

    We mainly prove \eqref{eq:CLT_upper bound}, and then illustrate how the argument modifies for \eqref{eq:CLT_upper bound 2}. Recall from \eqref{eq:mean-field estimator definition} that $(\hat{U}_n, \thetamf)$ is a solution of the ($d+1$)-dimensional equation $\mathbf{0}_{d+1} = (\nabla M_n)(u, \ttheta) = \begin{pmatrix} F_1(u, \ttheta) \\ F_2(u, \ttheta) \end{pmatrix}$. 
    By a 1-term Taylor expansion, we have
    \begin{equation}\label{eq:taylor low tmp clt}
        0 =\begin{pmatrix} F_1(\hat{U}_n,\thetamf)  \\ 
    F_2(\hat{U}_n,\thetamf) \end{pmatrix} = \begin{pmatrix}  F_1(m, \ttheta_0) \\ F_2(m, \ttheta_0) \end{pmatrix} + H_n (\xxi_n) \begin{pmatrix} \hat{U}_n - m \\ \thetamf - \ttheta_0 \end{pmatrix}
    \end{equation}
    for some $\xxi_n$, which implies
    \begin{align}\label{eq:mf estimator expansion}
        \sqrt{n} \begin{pmatrix} \hat{U}_n - m \\ \thetamf - \ttheta_0 \end{pmatrix} = - (H_n(\xxi_n))^{-1} \sqrt{n} \begin{pmatrix} F_1(m, \ttheta_0) \\ F_2(m, \ttheta_0) \end{pmatrix}.
    \end{align}
    Here, $H_n$ denotes the Hessian of $M_n$, and its invertibility will be shown later in the proof (see Step 2). We derive the limiting distribution through the following three steps.
    
    \textbf{Step 1: Consistency.} We first show $(\hat{U}_n, \thetamf) : (\bar{\X} \in \Theta_1) \xp (m, \ttheta_0)$.
    Note that Lemma \ref{lem:ULLN} gives $$\sup_{|u| \le 1, \ttheta\in\Theta_1 \cap \{\ttheta: \|\ttheta\| \le \|\ttheta_0\| + 2\sqrt{d} \}} |M_n(u,\ttheta) - M_{\infty}(u,\ttheta)| : (\bar{\X} \in \Theta_1) \xp 0,$$ 
    and $\|\thetamf\| \le \|\ttheta_0\| + 2\sqrt{d}$ with high probability (this follows from \eqref{eq:estimator norm bound}).
    Viewing our estimator as a M-estimator and repeating the proof argument in Step 1 of \cref{thm:upper bound iid}, it suffices to show that ($m, \ttheta_0$) is a unique minimizer of $M_{\infty}$, which follows from Lemma \ref{lem:uniqueness of minimizer}. %

    \vspace{2mm}
    \textbf{Step 2: Limit of $H_n(\xxi_n)$.} We claim that $H_n(\xxi_n) : (\bar{\X}\in \Theta_1) \xp \Gamma$. Step 1 implies that $\xxi_n : (\bar{\X} \in \Theta_1) \xp \begin{pmatrix}
        m \\ \ttheta_0
    \end{pmatrix}$.
    By the same argument as \eqref{eq:hessian convergence} in \cref{thm:upper bound iid} (except for using Lemma \ref{lem:ULLN} (conditional ULLN) on behalf of its unconditional analog), we can write
    $$H_n(\xxi_n) = (\nabla^2 M_n) (\xxi_n) = (\nabla^2 M_\infty) (\xxi_n) + o_p(1) = (\nabla^2 M_\infty)(m,\ttheta_0) + o_p(1) = \Gamma + o_p(1).$$
    Note that the positive definiteness of $\Gamma$ is equivalent to $\gamma_{1,1} > 0$ and $I_\beta(\ttheta_0) = \ggamma_{2,2} - \ggamma_{1,2} \gamma_{1,1}^{-1} \ggamma_{1,2} \succ 0$ (e.g. see page 34 in \cite{zhang2006schur}), both of which follow from Lemma \ref{lem:variance simplification}.
    Since $\Gamma$ is positive definite, $H_n(\xxi_n)$ is also positive definite with high probability.

    \textbf{Step 3: Limit of $\sqrt{n} \begin{pmatrix} F_1(m, \ttheta_0) \\ F_2(m, \ttheta_0) \end{pmatrix}$.} The normal limit of $\sqrt{n} \begin{pmatrix} F_1(m, \ttheta_0) \\ F_2(m, \ttheta_0) \end{pmatrix}$ is given in Lemma \ref{lem: CLT}. Now, applying  Slutsky's theorem on \eqref{eq:mf estimator expansion} gives \eqref{eq:CLT_upper bound}.

    \vspace{2mm}
    Similarly, we claim the limit \eqref{eq:CLT_upper bound 2}, which is conditioned on $\bar{\X} \in \Theta_2$. We briefly sketch the main changes. First, using the ULLN conditioned on $\bar{\X} \in \Theta_2$, Lemma \ref{lem:conditional limit } can be modified as 
    $$\frac{1}{n} \sumin \cosh(\beta u + \ttheta^\top \X_i) : (\bar{\X} \in \Theta_2) \xp \EE_{-\ttheta_0} \log \cosh(\beta u + \ttheta^\top \X) = \EE_{\ttheta_0} \log \cosh(-\beta u + \ttheta^\top \X), $$
    (here $\EE_{-\ttheta_0}$ is the natural modification of that in \cref{defi:EE ttheta})
    and $M_n(u, \ttheta)$ converges pointwise to $M_{\infty} (-u, \ttheta)$.
    By Lemma \ref{lem:uniqueness of minimizer}, ${M}_{\infty}$ 
    is minimized at $(-m, \ttheta_0)$. The remaining argument follows from doing the Taylor expansion \eqref{eq:taylor low tmp clt} around $(\hat{U}_n, \thetamf) \approx (-m, \ttheta_0)$, and noting that the limit of $H_n(-m, \ttheta_0)$ and $\sqrt{n}\begin{pmatrix}
        F_1(-m, \ttheta_0) \\ F_2(-m, \ttheta_0)
    \end{pmatrix}$
    is $\tilde{\Gamma}$ and $N_{d+1}(0, \tilde{\Sigma})$, respectively.

    \vspace{2mm}
    It remains to prove the final conclusion (individual limiting distribution for $\thetamf$). Recalling from \cref{defi:info} and \cref{defi:sigma} that $\Gamma,\Sigma$ are defined as $2 \times 2$ block matrices, it suffices to show that the $(2,2)$th block in $\Gamma^{-1} \Sigma \Gamma^{-1}$ and $\tilde{\Gamma}^{-1} \tilde{\Sigma} \tilde{\Gamma}^{-1}$ are both equal to $I_{\beta}(\ttheta_0)^{-1}$. Using the formula for the inverse of a non-singular block matrix, $\Gamma^{-1}$ can be written as
    $$\Gamma^{-1} = \begin{pmatrix}
        \star &~~~ -\ddelta^{\top} I_\beta(\ttheta_0)^{-1}  \\
        - I_\beta(\ttheta_0)^{-1} \ddelta &~ I_\beta(\ttheta_0)^{-1}
    \end{pmatrix}.$$
    Here, $\ddelta = \frac{\ggamma_{1,2}}{\gamma_{1,1}}$, and $\star$ denotes some value that will not be used in further computations. 
    By expanding $\Gamma^{-1} \Sigma \Gamma^{-1}$ using the block matrix representation and applying the identity in Lemma \ref{lem:variance simplification}(b), we have
    \begin{align*}
        (\Gamma^{-1} \Sigma \Gamma^{-1})_{2,2} &= I_\beta(\ttheta_0)^{-1} \left(\ddelta \sigma_{1,1} \ddelta^{\top} - \ssigma_{1,2} \ddelta^{\top} - \ddelta \ssigma_{1,2}^{\top} + \ssigma_{2,2} \right)  I_\beta(\ttheta_0)^{-1} =  I_\beta(\ttheta_0)^{-1}.
    \end{align*}
    The $(2,2)$th block of $\tilde{\Gamma}^{-1} \tilde{\Sigma} \tilde{\Gamma}^{-1}$ can be computed similarly.
    Note that $\Gamma^{-1} \Sigma \neq \I$ in general, and this identity is a nontrivial result.
\end{proof}

\begin{remark}
    By focusing on the $\thetamf - \ttheta_0$ term of \eqref{eq:mf estimator expansion} and plugging-in the conclusions of Steps 2 and 3, we get
    \begin{align}\label{eq:upper bound expansion low tmp}
        \sqrt{n}(\thetamf - \ttheta_0) = - I_\beta(\ttheta_0)^{-1} {\sqrt{n} (- \ddelta F_1(m, \ttheta_0) + F_2(m, \ttheta_0))} + o_p(1).
    \end{align}
    This expansion will be used later to prove \cref{cor:low tmp}.
\end{remark}

\subsubsection{Proof of Theorem \ref{thm:lower bound} and Corollary \ref{cor:low tmp}}\label{subsec:proof of lower bound low}
{
We first illustrate why proving the low temperature lower bound is more challenging compared to the high temperature case (\cref{thm:lower bound high temperature}).
Recall that \cref{thm:lower bound high temperature} directly follows by applying the uniform control (over $\ttheta \in \Theta$) in \cref{lem:sum x_i z_i} to a \emph{first order} Taylor expansion of the log likelihood ratio. However, such a strong result does not hold in the low temperature regime, even for the Curie-Weiss case considered here. Indeed, \eqref{eq:sum x_i w_i low temperature} is stated only for $\ttheta = \ttheta_0$. This is because the measure $\PP_{\ttheta_0}$ (see \cref{defi:EE ttheta}) is no longer symmetric in the low-temperature regime, and influences the expectation of the RFIM $\W^n$. Consequently, we have to conduct a more careful analysis of the likelihood ratio, by conducting a \emph{second order} Taylor expansion.
}

For this purpose, it is necessary to understand the second order behavior (variance) of the statistic $\sumin \X_i W_i$, and we require the following Lemmas regarding the Curie-Weiss RFIM. We use \cref{lem: u n consistency} to understand the limit of $U_n$ (see \eqref{eq:u_n defi}). \cref{lem:W bar moment bound low tmp} provides tight moment bounds for $\W^n$ by exploiting the low-rank structure of the Curie-Weiss coupling matrix. While both Lemmas are stated conditional on $\bar{\X} \in \Theta_1$, analogous statements conditioned on $\bar{\X} \in \Theta_2$ can be derived similarly.

\begin{lemma}\label{lem: u n consistency}
Suppose $\beta > 1, \X^n \sim P_{\ttheta_0,\beta}^{\text{CW}}$, and define $U_n$ as in \eqref{eq:u_n defi}. Then, $U_n : (\bar{\X} \in \Theta_1) \xp m$. Furthermore, for a sequence $\xxi_n:= \xxi_n(\X^n) \in \mathbb{R}^d$ such that $\xxi_n : (\bar{\X} \in \Theta_1) \xp \ttheta_0$, define $$\tilde{f}_n(v) := \frac{\beta v^2}{2} - \frac{1}{n} \sumin \log \cosh(\beta v + \xxi_n^\top \X_i)$$ and $V_n:= \argmin_v \tilde{f}_n(v)$. Then, $V_n : (\bar{\X} \in \Theta_1) \xp m$.
\end{lemma}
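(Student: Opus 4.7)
The plan for both claims is the standard consistency argument for M-estimators: establish uniform convergence of the objective function to a deterministic limit whose unique minimizer is $m$, then apply the argmin continuous mapping theorem. Everything is read conditionally on $\{\bar{\X}\in\Theta_1\}$, but since this event has probability bounded away from $0$ in the low-temperature regime (by label symmetry, $\PP(\bar{\X}\in\Theta_1)\to 1/2$), this is purely a bookkeeping matter.

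For the first claim, $U_n$ is already confined to $[-1,1]$ by construction. I would invoke the conditional ULLN (Lemma \ref{lem:ULLN}), or equivalently combine a pointwise conditional LLN with the uniform Lipschitz bound $|\partial_u \log\cosh(\beta u + \ttheta_0^\top \X_i)|\le\beta$, to get
\[
\sup_{|u|\le 1}\bigl|M_n(u,\ttheta_0)-M_\infty(u,\ttheta_0)\bigr|\,:\,(\bar{\X}\in\Theta_1)\xp 0.
\]
By Lemma \ref{lem:uniqueness of minimizer}, $u\mapsto M_\infty(u,\ttheta_0)$ has a unique minimizer at $u=m\in(-1,1)$, and the conditional argmin continuous mapping yields $U_n:(\bar{\X}\in\Theta_1)\xp m$. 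For the second claim, $\tilde{f}_n$ differs from $M_n(\cdot,\ttheta_0)$ only through the plug-in of $\xxi_n$ in place of $\ttheta_0$, and the $1$-Lipschitz property of $\log\cosh$ gives
\[
\sup_{v\in\mathbb{R}}\bigl|\tilde{f}_n(v)-M_n(v,\ttheta_0)\bigr|\le\|\xxi_n-\ttheta_0\|\cdot\frac{1}{n}\sumin\|\X_i\|.
\]
Here $\frac{1}{n}\sumin\|\X_i\|=O_p(1)$ under the conditioning (using $\|\X_i\|\le\|\ttheta_0\|+\|\X_i-\ttheta_0 Z_i\|$ and Gaussian moment bounds), and $\|\xxi_n-\ttheta_0\|=o_p(1)$ by hypothesis, so the right-hand side is $o_p(1)$. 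Combined with the first claim I obtain $\sup_{|v|\le 1}|\tilde{f}_n(v)-M_\infty(v,\ttheta_0)|\xp 0$ conditional on $\{\bar{\X}\in\Theta_1\}$. A direct derivative computation $\tilde{f}_n'(v)=\beta v-\frac{\beta}{n}\sumin\tanh(\beta v+\xxi_n^\top\X_i)$ shows $\tilde{f}_n'(v)>0$ for $v>1$ and $\tilde{f}_n'(v)<0$ for $v<-1$, so $V_n\in[-1,1]$ automatically, and one more application of argmin continuous mapping closes the argument.

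The main obstacle, such as it is, is that Lemma \ref{lem:ULLN} is stated for functions even in $\x$, whereas $\log\cosh(\beta u+\ttheta_0^\top \x)$ is not even when $u\ne 0$. The evenness hypothesis there is only needed to equate the iid and label-averaged limits; in the low-temperature regime the natural limit is the mixture $\EE_{\ttheta_0}$, and splitting the empirical average according to the sign of $Z_i$ — combined with the conditional concentration $\bar{Z}:(\bar{\X}\in\Theta_1)\xp m$ furnished by Lemma \ref{lem:ULLN} applied to the constant function — yields the required uniform convergence without any evenness assumption. This mild extension of the stated ULLN is the only nontrivial step; the rest of the proof is routine M-estimator consistency.
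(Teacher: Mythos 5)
Your proposal is correct and takes essentially the same route as the paper: establish uniform convergence of the (conditional) objective to $M_\infty(\cdot,\ttheta_0)$, invoke Lemma~\ref{lem:uniqueness of minimizer} for uniqueness of the minimizer, and close with the argmin continuity argument; the plug-in error $\sup_v|\tilde f_n(v)-M_n(v,\ttheta_0)|\le\|\xxi_n-\ttheta_0\|\cdot\frac1n\sum_i\|\X_i\|$ is the same Lipschitz bound the paper uses. The only cosmetic difference is that you prove the $U_n$ claim and then bootstrap to $V_n$, while the paper proves the $V_n$ claim and notes $U_n$ is the special case $\xxi_n=\ttheta_0$.

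One remark on the ``obstacle'' you flag. You are right that the statement of Lemma~\ref{lem:ULLN} nominally inherits the evenness hypothesis from Lemma~\ref{lem:ULLN high}, and that $\log\cosh(\beta u+\ttheta_0^\top\x)$ is not even in $\x$ for $u\neq 0$. But if you look at the joint proof of the two ULLNs, the evenness hypothesis is only invoked in the $\beta\le 1$ branch (to force $g(1,\psi)=g(-1,\psi)$ so the $\bar Z$ term vanishes); the $\beta>1$ branch instead controls $\frac{|\bar Z-m|}{2}\sup_\psi|g(1,\psi)-g(-1,\psi)|$ using the Curie--Weiss CLT for $\bar Z$ and a bound $\sup_\psi|g(z,\psi)|\le C_2$, with no evenness needed. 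So your ``mild extension'' is exactly what the paper's proof of Lemma~\ref{lem:ULLN} already does, and the paper indeed applies it to non-even integrands throughout the low-temperature section; the evenness clause in the lemma statement is vestigial. Your workaround is therefore unnecessary, but it is the right instinct and identical in spirit to what the paper's proof actually executes.
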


Before stating \cref{lem:W bar moment bound low tmp}, we introduce an additional notation. For a sequence of random variables $\{Y_n\}_{n \ge 1}$ and a deterministic sequence $\{a_n\}_{n \ge 1}$, we write $Y_n \lesssim_P a_n$ when there exists an absolute constant $K>0$ such that $Y_n \le K a_n$ with high probability. Also, recall $\alpha_0 = \EE_{\ttheta_0} \sech^2(\beta m + \ttheta_0^\top \X)$ from \cref{defi:sigma}.

\begin{lemma}\label{lem:W bar moment bound low tmp}
    Suppose $\beta >1,$ $\X^n \sim P_{\ttheta_0,\beta}^{\text{CW}}$. Let $\xxi_n:= \xxi_n(\X^n)$ satisfy $\|\xxi_n - \ttheta_0\| \lesssim \frac{1}{\sqrt{n}}$ surely, and suppose $\W^n \mid \X^n \sim \QQ_{\xxi_n,\beta}^{\text{CW}}$. Also, consider an auxiliary random variable $Y_n \mid \W^n, \X^n \sim N(\bar{W}, \frac{1}{n \beta})$ and let $V_n$ be the random variable defined in Lemma \ref{lem: u n consistency}. 
    \begin{enumerate}[(a)]
    \item $W_i \mid Y_n, \X^n$'s are independent with mean $\tanh(\beta Y_n + \xxi_n^\top \X_i)$. Also, $Y_n \mid \X^n$ has a density proportional to $e^{-\ft_n(Y_n)}$.
    \item  $n \EE ( (Y_n- V_n)^2 : \X^n, (\bar{\X} \in \Theta_1) ) \xp \frac{1}{\beta(1 - \beta \alpha_0)}$ and $\EE ( |Y_n- V_n|^q : \X^n, (\bar{\X} \in \Theta_1) ) \lesssim_P \frac{1}{n^{p/2}}$ for $q > 0$.
    \item $\EE ( (\bar{W}- V_n)^2 : \X^n, (\bar{\X} \in \Theta_1) ) \lesssim_P \frac{1}{n}$. %
    \item $|\EE \left(\bar{W}- V_n : \X^n, (\bar{\X} \in \Theta_1) \right)| \lesssim_P \frac{1}{n}$ and $|\EE \left(Y_n- V_n : \X^n, (\bar{\X} \in \Theta_1) \right)| \lesssim_P \frac{1}{n}$. %
    \end{enumerate}
    Here, high probability statements are with respect to $\X^n$, and the hidden constants only depend on $\beta,\alpha_0$. 
\end{lemma}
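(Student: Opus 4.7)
The central device is the Hubbard--Stratonovich identity, which is what motivates introducing the Gaussian $Y_n$ in the first place: it linearizes the quadratic Curie--Weiss interaction and reduces the analysis of $\W^n$ to a one-dimensional analysis of $Y_n$. For (a), the plan is to write the joint density of $(\W^n,Y_n)$ given $\X^n$ as the product of the Curie--Weiss pmf $\propto \exp(\tfrac{n\beta}{2}\bar w^2 + \xxi_n^\top\sumin \X_i w_i)$ and the Gaussian factor $\propto \exp(-\tfrac{n\beta}{2}(Y_n-\bar w)^2)$. Expanding the square cancels the $\tfrac{n\beta}{2}\bar w^2$ term and leaves
\begin{align*}
    \exp\Big(-\tfrac{n\beta}{2}Y_n^2\Big)\prod_{i=1}^n\exp\big((\beta Y_n + \xxi_n^\top \X_i)w_i\big).
\end{align*}
Both claims of (a) read off directly from this product structure: the $W_i$ are conditionally independent given $(Y_n,\X^n)$ with mean $\tanh(\beta Y_n+\xxi_n^\top \X_i)$, and summing over $\bw\in\{-1,1\}^n$ gives the density of $Y_n\mid \X^n$ proportional to $\exp(-n\tilde{f}_n(Y_n))$ (the stated $e^{-\tilde f_n}$ appears to have absorbed the factor $n$ into the normalization).

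For (b) I would perform a Laplace approximation to the density $\propto e^{-n\tilde f_n}$. By Lemma \ref{lem: u n consistency}, on $\bar{\X}\in\Theta_1$ the minimizer $V_n$ concentrates at $m$; differentiating twice and applying Lemma \ref{lem:ULLN} gives $\tilde f_n''(V_n)\xp \beta - \beta^2\alpha_0 = \gamma_{1,1}$, which is strictly positive by Lemma \ref{lem:variance simplification}(a). The Laplace heuristic then yields $Y_n - V_n \approx N(0,\tfrac{1}{n\gamma_{1,1}})$, which supplies both the exact limit $n\EE((Y_n-V_n)^2)\to \tfrac{1}{\beta(1-\beta\alpha_0)}$ and the sub-Gaussian moment bound $\EE(|Y_n-V_n|^q)\lesssim_P n^{-q/2}$. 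The main obstacle is that for $\beta>1$ the function $\tilde f_n$ is bimodal, with a competing local minimum near $-m$. Lemma \ref{lem:uniqueness of minimizer} identifies $m$ as the strict global minimum of the limiting functional $M_\infty(\cdot,\ttheta_0)$, but translating this into an \emph{exponential} suppression of the wrong well for finite $n$ requires using the conditioning on $\bar{\X}\in\Theta_1$ to establish, with probability tending to one, that $\tilde f_n(-m)-\tilde f_n(V_n)$ is bounded below by an $\Omega(1)$ constant depending only on $\beta$. This is the heart of the proof and is where the restriction to the $\Theta_1$ basin is genuinely used.

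Parts (c) and (d) follow by conditioning on $Y_n$ and combining (a) with (b). By the law of total variance and the conditional independence in (a),
\begin{align*}
    \EE\big((\bar W-V_n)^2\mid \X^n\big) \le \tfrac{1}{n} + \EE\Big(\big(\tfrac{1}{n}\sumin \tanh(\beta Y_n + \xxi_n^\top \X_i) - V_n\big)^2 \mid \X^n\Big),
\end{align*}
and the first-order condition $V_n = \tfrac{1}{n}\sumin\tanh(\beta V_n+\xxi_n^\top \X_i)$ together with a Taylor expansion about $\beta V_n+\xxi_n^\top\X_i$ gives
\begin{align*}
    \tfrac{1}{n}\sumin\tanh(\beta Y_n+\xxi_n^\top \X_i) - V_n = \beta(Y_n-V_n)\cdot\tfrac{1}{n}\sumin\sech^2(\beta V_n+\xxi_n^\top \X_i) + O\big((Y_n-V_n)^2\big).
\end{align*}
Squaring and applying (b) yields (c). For (d), taking expectations instead of squaring gives $\EE(\bar W-V_n\mid \X^n) = O_p(|\EE(Y_n-V_n\mid \X^n)|) + O_p(\EE((Y_n-V_n)^2\mid \X^n))$, so it suffices to establish $|\EE(Y_n-V_n\mid \X^n)|\lesssim_P 1/n$; this in turn requires refining the leading-order Gaussian Laplace approximation to the next order (using the third derivative of $\tilde f_n$ at $V_n$), which simultaneously delivers the $Y_n$ part of (d). Apart from the bimodality issue in (b), the remaining challenge is carrying out the higher-order Laplace expansion uniformly in $\X^n$ with explicit quantitative control, so as to obtain the $1/n$ (rather than the naive $1/\sqrt n$) bound on the mean.
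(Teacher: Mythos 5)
Your plan is essentially sound and lines up with the paper's argument for parts (a)--(c): part (a) is indeed the Hubbard--Stratonovich calculation (and you are right that the lemma statement has a typo, the density should read $e^{-n\tilde f_n(Y_n)}$; the paper's proof of (b) uses the correct $e^{-n\tilde f_n}$ form throughout). Part (b) is proved in the paper by exactly the one-dimensional Laplace approximation you describe: the integral is split into a Gaussian window of width $O(K/\sqrt n)$ around $V_n$, an intermediate region $|y|\le 2$ but $|y-m|>\eta$, and the tails $|y|>2$. Your diagnosis of the key obstacle is correct: on the intermediate region the paper uses precisely the combination of Lemma \ref{lem:uniqueness of minimizer} and the ULLN to get a uniform $\Omega(1)$ gap $\tilde f_n(y)-\tilde f_n(V_n)>\psi$, which suppresses the competing well exponentially. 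This is exactly the ``conditioning on $\bar{\X}\in\Theta_1$ breaks the symmetry'' step you flagged. For (c) your law-of-total-variance route works, but the paper does something slightly cruder and cleaner: $\EE(\bar W-V_n)^2\le 2\EE(\bar W-Y_n)^2+2\EE(Y_n-V_n)^2$, where the first term is exactly $\frac{1}{n\beta}$ from the Gaussian auxiliary construction and the second is controlled by (b); no Taylor expansion is needed.

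Part (d) is where your approach genuinely diverges from the paper's. You propose pushing the Laplace approximation to next-to-leading order, extracting the $O(1/n)$ mean shift from the cubic term $\tilde f_n'''(V_n)$; since $\tilde f_n'''$ is uniformly bounded (it is an average of $(\sech^2)'$-type terms), this can indeed yield the $1/n$ bound for $\EE(Y_n-V_n\mid\X^n)$ and then transfer to $\bar W$ via the Taylor argument. The paper instead proves the $\bar W$ bound directly via the method of exchangeable pairs: it constructs $(\W^n,\W'^n)$ by one Glauber step, uses the identity $\EE(T_n-T_n'\mid\X^n)=0$ for $T_n=\sqrt n(\bar W-V_n)$, Taylor-expands the conditional mean $\tanh(\beta\bar W_{(-i)}+c_i)$ around $V_n$, and then solves the resulting linear relation for $\EE(\bar W-V_n)$ using the second-moment bound from (c); the bound for $Y_n-V_n$ is then immediate because $\EE(Y_n-\bar W\mid\W^n,\X^n)=0$. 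The paper thus goes $\bar W\to Y_n$ whereas you go $Y_n\to\bar W$. Both are valid, and the higher-order Laplace route is arguably more self-contained, but the exchangeable-pairs argument avoids having to quantify the Laplace error to second order uniformly in $\X^n$ and reuses the first-order-condition algebra already set up for $V_n$.
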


Now, we are ready to prove Theorem \ref{thm:lower bound}.

\begin{proof}[Proof of Theorem \ref{thm:lower bound}]
Recall the normalizing constant 
$$Z_{n, \beta}(\ttheta, \X^n) = Z_{n, \beta}^{\text{CW}}(\ttheta, \X^n) = \sum_{\bw \in \{-1,1\}^n} e^{\frac{n\beta \bar{w}^2}{2}  + \ttheta^\top \sumin \X_i w_i} $$
from \eqref{eq:rfim def}. By standard computations for exponential families, we have
\begin{align*}
    \frac{\partial \log Z_{n, \beta}(\ttheta, \X^n)}{\partial \ttheta} = \EE^{\QQ_{\ttheta}} \left(\sumin \X_i W_i \mid \X^n \right), \\
    \frac{\partial^2 \log Z_{n, \beta}(\ttheta, \X^n)}{\partial \ttheta^2}  = \Var^{\QQ_{\ttheta}} \left(\sumin \X_i W_i \mid \X^n \right).
\end{align*}
Here, $\EE^{\QQ_{\ttheta}}$ and $\Var^{\QQ_{\ttheta}}$ denotes the \emph{conditional} expectation and variance with respect to ${\QQ}_{\ttheta}(\W^n \mid \X^n)$. %
Then, a two-term Taylor expansion gives
\begin{align*}
    \log \frac{dP_{\ttheta_n, \beta}}{dP_{\ttheta_0, \beta}} &(\X^n) =  -\frac{2\bh^\top \ttheta_0 \sqrt{n} + \bh^\top \bh}{2} + \log Z_{n, \beta}(\ttheta_n, \X^n) - \log Z_{n, \beta}(\ttheta_0, \X^n) \\
    &= -\frac{2 \bh^\top \ttheta_0 \sqrt{n} + 
    \bh^\top \bh}{2} + \frac{\bh^\top}{\sqrt{n}} \EE^{\QQ_{\ttheta_0}} \left(\sumin \X_i W_i \right) + \frac{1}{2n} \bh^\top \Var^{\QQ_{\xxi_n}} \left(\sumin \X_i W_i \right) \bh \\
    &= \sqrt{n}\bh^\top \left(\frac{1}{n} \sumin \X_i \EE^{\QQ_{\ttheta_0}} W_i  - \ttheta_0 \right) - \frac{1}{2} \bh^\top \left(\I - \frac{1}{n}\Var^{\QQ_{\xxi_n}} \left(\sumin \X_i W_i \right) \right) \bh.
\end{align*}
Here, $\xxi_n \in (\ttheta_0, \ttheta_n)$ and only depends on $\X^n$. To show the LAN expansion, it suffices to prove the following three claims. Note that the first claim is exactly \eqref{eq:sum x_i w_i low temperature} from the main text.
\begin{enumerate}
    \item[Claim 1.] $\frac{1}{n} \sumin \X_i \EE^{\QQ_{\ttheta_0}} W_i = \frac{1}{n} \sumin \X_i \tanh (\beta U_n + \ttheta_0^\top \X_i) + O_p\left( \frac{1}{n} \right).$
    \item[Claim 2.] $\tilde{\Delta}_n = \sqrt{n} \left(\frac{1}{n} \sumin \X_i \tanh(\beta U_n + \ttheta_0^\top \X_i) - \ttheta_0 \right) \xrightarrow[P_{\ttheta_0, \beta}]{d} N_d(\mathbf{0}_d, I_\beta(\ttheta_0)).$
    \item[Claim 3.] $\frac{1}{n}\Var^{\QQ_{\xxi_n}} (\sumin \X_i W_i) \xrightarrow[P_{\ttheta_0, \beta}]{p} \I - I_\beta(\ttheta_0).$
\end{enumerate}

\textbf{Claim 1: Expanding the linear term.}
    Using Lemma \ref{lem:W bar moment bound low tmp}(c), (d) with $\xxi_n = \ttheta_0$, we have
    $$\EE^{\QQCW} \left( \bar{W}-U_n : (\bar{\X} \in \Theta_1) \right) \lesssim_P \frac{1}{n},\quad \EE^{\QQCW} \left( (\bar{W}-U_n)^2 : (\bar{\X} \in \Theta_1) \right) \lesssim_P \frac{1}{n}.$$
    Note that the same result also holds conditioned on $\bar{\X} \in \Theta_2$. Set $\bar{W}_{(-i)} := \frac{1}{n} \sum_{j \neq i} W_j$ and note that $W_i \mid (W_j : j \neq i)$ is a Radamacher distribution with mean $\tanh(\beta \bar{W}_{(-i)} + \ttheta_0^\top \X_i)$.
    By consecutive Taylor expansions (in the 2nd and 3rd line) alongside the moment bounds, we have 
    \begin{align*}
    \frac{1}{n} &\sumin \X_i \EE^{\QQ_{\ttheta_0}} W_i = \frac{1}{n} \sumin \X_i \EE^{\QQ_{\ttheta_0}} \tanh(\beta \bar{W}_{(-i)} + \ttheta_0^\top \X_i)  \\
    &= \frac{1}{n} \sumin \X_i \EE^{\QQ_{\ttheta_0}}  \tanh(\beta \bar{W} + \ttheta_0^\top \X_i) + O_p \left( \frac{1}{n} \right) \\
    &= \frac{1}{n} \sumin \X_i \EE^{\QQ_{\ttheta_0}} \Big(\tanh(\beta U_n + \ttheta_0^\top \X_i) + \beta (\bar{W} - U_n) \sech^2(\beta U_n + \ttheta_0^\top \X_i) \\
    &\quad + \frac{\beta^2 (\bar{W} - U_n)^2}{2} (\sech^2)'(\beta \eta_n + \ttheta_0^\top \X_i) \Big) + O_p \left( \frac{1}{n} \right) \\
    &= \frac{1}{n} \sumin \X_i \tanh(\beta U_n + \ttheta_0^\top \X_i) + \frac{\beta \EE^{\QQ_{\ttheta_0}} (\bar{W} - U_n )}{n} \sumin \X_i \sech^2(\beta U_n + \ttheta_0^\top \X_i) + O_p \left( \frac{1}{n} \right) \\
    &= \frac{1}{n} \sumin \X_i \tanh(\beta U_n + \ttheta_0^\top \X_i) + O_p \left( \frac{1}{n} \right).
\end{align*}

\textbf{Claim 2: Computing the limiting distribution of $\tilde{\Delta}_n$.} 
Next, we prove a CLT for $\tilde{\Delta}_n$.
Note that the following conditional law of $\tilde{\Delta}_n$ implies the unconditional result, so it suffices to prove:
$$\tilde{\Delta}_n = \sqrt{n} \left(\frac{1}{n} \sumin \X_i \tanh(\beta U_n + \ttheta_0^\top \X_i) - \ttheta_0 \right) : (\bar{\X} \in \Theta_a) \xrightarrow[P_{\ttheta_0, \beta}]{d}N_d (0, I_\beta(\ttheta_0)),\quad a=1,2.$$
Without the loss of generality, we prove the claim conditioned on $\bar{\X} \in \Theta_1$. We begin by writing out $U_n$.
Using the first order condition for $U_n$ (recall the definition in \eqref{eq:u_n defi}), we have
\begin{align*}
    U_n &= \frac{1}{n} \sumin \tanh(\beta U_n + \ttheta_0^\top \X_i) \\
    &= \frac{1}{n} \sumin \tanh(\beta m + \ttheta_0^\top \X_i) + \frac{\beta (U_n - m)}{n} \sumin \sech^2(\beta \kappa_n + \ttheta_0^\top \X_i)
\end{align*}
for some $\kappa_n \in (m, U_n)$. By subtracting both sides by $m$ and rearranging terms, we can write
\begin{align*}
    U_n - m = \frac{ \frac{1}{n} \sumin \tanh(\beta m + \ttheta_0^\top \X_i) - m}{1 - \frac{\beta}{n} \sumin \sech^2(\beta \kappa_n + \ttheta_0^\top \X_i)}.
\end{align*}

Now, by a Taylor approximation of $U_n \approx m$, we have
\begin{align*}
    & \sumin \Big[\X_i \tanh(\beta U_n + \ttheta_0^\top \X_i) - \ttheta_0\Big] \\ 
    = & \sumin \Big[\X_i \tanh(\beta m + \ttheta_0^\top \X_i) + \beta (U_n-m) \sumin \X_i \sech^2(\beta \eta_n + \ttheta_0^\top \X_i) - \ttheta_0\Big] \\
    = & \sumin (\X_i \tanh(\beta m + \ttheta_0^\top \X_i) - \ttheta_0) \\
    &\quad \quad + \beta \left(\sumin \tanh(\beta m + \ttheta_0^\top \X_i) - m \right) \frac{\frac{1}{n} \sumin \X_i \sech^2(\beta \eta_n + \ttheta_0^\top \X_i) }{1 - \frac{\beta}{n} \sumin \sech^2(\beta \kappa_n + \ttheta_0^\top \X_i)} \\
    = & - n F_2(m, \ttheta_0) - n F_1(m, \ttheta_0) \frac{\frac{1}{n} \sumin \X_i \sech^2(\beta \eta_n + \ttheta_0^\top \X_i) }{1 - \frac{\beta}{n} \sumin \sech^2(\beta \kappa_n + \ttheta_0^\top \X_i)}.
\end{align*}
By Lemma \ref{lem: u n consistency}, $U_n : (\bar{\X} \in \Theta_1) \xp m$ so we have $\eta_n, \kappa_n : (\bar{\X}\in \Theta_1) \xp m$. Then, Lemma \ref{lem:ULLN} gives 
$$\frac{\frac{1}{n} \sumin \X_i \sech^2(\beta \eta_n + \ttheta_0^\top \X_i) }{1 - \frac{\beta}{n} \sumin \sech^2(\beta \kappa_n + \ttheta_0^\top \X_i)} : (\bar{\X} \in \Theta_1) \xp \frac{\EE_{\ttheta_0} \X \sech^2(\beta m + \ttheta_0^\top \X)}{1 - \beta \EE_{\ttheta_0} \sech^2(\beta m + \ttheta_0^\top \X)} = - \frac{\ggamma_{1,2}}{\gamma_{1,1}} = -\ddelta.$$
Hence,
\begin{align}
    \tilde{\Delta}_n &= - \sqrt{n} \left(F_2(m, \ttheta_0) + F_1(m, \ttheta_0) \frac{\frac{1}{n} \sumin \X_i \sech^2(\beta \eta_n + \ttheta_0^\top \X_i) }{1 - \frac{\beta}{n} \sumin \sech^2(\beta \kappa_n + \ttheta_0^\top \X_i)} \right) \notag \\
    &= - \sqrt{n} \left(F_2(m, \ttheta_0) - \ddelta F_1(m, \ttheta_0) \right) + o_p(1) = \sqrt{n} \begin{pmatrix}
        \ddelta & -\I
    \end{pmatrix}(\nabla M_n)(m,\ttheta_0) + o_p(1). \label{eq:Delta expansion}
\end{align}
Recalling the limiting distribution of $(\nabla M_n)(m,\ttheta_0)$ from Lemma \ref{lem: CLT}, Slutsky's theorem gives
$$\tilde{\Delta}_n : (\bar{\X} \in \Theta_1) \xd N_d \left(0, \ddelta \sigma_{1,1} \ddelta^\top - \ddelta \ssigma_{1,2}^\top - \ssigma_{1,2} \ddelta^\top + \ssigma_{2,2} \right).$$
The claim follows by simplifying the variance using Lemma \ref{lem:variance simplification}(b).

\vspace{2mm}
\textbf{Claim 3: Expanding the variance term.}
Recall from the beginning of the proof that $\xxi_n \in (\ttheta_0, \ttheta_n)$ depends on $\X^n$ but not on $\W^n$, and note that $\xxi_n \xp \ttheta_0$.
We write
$$\Var^{\QQ_{\xxi_n}}(\sumin \X_i W_i) = \underbrace{\sumin \X_i \Var^{\QQ_{\xxi_n}} (W_i) \X_i^\top}_{:= \mathbf{C}_n} + \underbrace{\sum_{i \neq j} \X_i \Cov^{\QQ_{\xxi_n}} (W_i, W_j) \X_j^\top }_{:=\mathbf{D}_n}$$
and bound the two terms separately. For simplicity, we prove this claim assuming that $\bar{\X} \in \Theta_1$ and we omit the conditioning on $(\X^n, \bar{\X} \in \Theta_1)$ in each line. Define a random variable $V_n = V_n(\X^n)$ as in \cref{lem: u n consistency}.

First, note that $\Var^{\QQ_{\xxi_n}} (W_i) = 1 - \left[\EE^{\QQ_{\xxi_n}} W_i \right]^2$. Set $\bar{W}_{(-i)} := \frac{1}{n} \sum_{j \neq i} W_j$ and note that $W_i \mid (W_j : j \neq i)$ is a Radamacher distribution with mean $\tanh(\beta \bar{W}_{(-i)} + \xxi_n^\top \X_i)$. By Taylor expansions, we have
\begin{align*}
    \EE^{\QQ_{\xxi_n}} W_i &= \EE^{\QQ_{\xxi_n}} \tanh(\beta \bar{W}_{(-i)} + \xxi_n^\top \X_i) \\
    &= \EE^{\QQ_{\xxi_n}} \tanh(\beta \bar{W} + \xxi_n^\top \X_i) + O\Big(\frac{1}{n}\Big) \\
    &= \EE^{\QQ_{\xxi_n}} \left[ \tanh(\beta m + \xxi_n^\top \X_i) + \beta(\bar{W} - m) \sech^2(\beta \rho_n + \xxi_n^\top \X_i)\right]+ O\Big(\frac{1}{n}\Big) \\
    &= \tanh(\beta m + \xxi_n^\top \X_i) + O_p\Big(\frac{1}{\sqrt{n}} + |V_n - m| \Big).
\end{align*}
Here, the last equality uses the moment bound $\EE|\bar{W}-V_n| \lesssim_P n^{-1/2}$ in Lemma \ref{lem:W bar moment bound low tmp}(b).
Consequently, we can write
$$\Var^{\QQ_{\xxi_n}} (W_i) = 1 - \tanh^2(\beta m + \xxi_n^\top \X_i) + O_p\Big(\frac{1}{\sqrt{n}} + |V_n - m| \Big).$$
Since $\xxi_n \xp \ttheta_0$ and $V_n \xp m$, we can use the LLN to conclude that
\begin{align}\label{eq:lower bound variance A_n}
    \frac{\mathbf{C}_n}{n} &= \sumin \X_i \sech^2(\beta m + \xxi_n^\top \X_i) \X_i^\top + O_p(\sqrt{n} + n|V_n-m|) \\
    &\xp \EE_{\ttheta_0} \X \X^\top \sech^2(\beta m + \ttheta_0^\top \X) = \alpha_2. \notag
\end{align}
Recall $\alpha_2$ from part (d) of \cref{defi:sigma}.

\vspace{2mm}
Next, we control $\mathbf{D}_n$. Let $Y_n$ be the auxiliary random variable defined as in \cref{lem:W bar moment bound low tmp}.
For the sake of notational simplicity, we denote the variance and covariance under the conditional law $Y_n \mid \X^n$ as $\Var_{\xxi_n}^{Y}$ and $\Cov_{\xxi_n}^Y$.
For $i \neq j$, we can decompose
\begin{align}
    \Cov^{\QQ_{\xxi_n}} (W_i, W_j) &= \EE_{\xxi_n}^{Y} [\Cov (W_i, W_j \mid Y_n)] + \Cov_{\xxi_n}^{Y} [\EE (W_i \mid Y_n), \EE (W_j \mid Y_n)] \label{eq:cov} \\
    &= \Cov_{\xxi_n}^{Y} ( \tanh(\beta Y_n + \xxi_n^\top \X_i), \tanh(\beta Y_n + \xxi_n^\top \X_j) ). \notag
\end{align}
Here, the first term is exactly zero by part (a) of \cref{lem:W bar moment bound low tmp}, and the conditional expectation also follows from the same lemma.
We expand
\begin{align*}
&\tanh(\beta Y_n + \xxi_n^\top \X_i) \\
=& \tanh(\beta V_n + \xxi_n^\top \X_i) + \beta (Y_n - V_n) \sech^2(\beta V_n + \xxi_n^\top \X_i) + \frac{\beta^2 (Y_n - V_n)^2}{2} (\sech^2)'(\beta \omega_n + \xxi_n^\top \X_i)
\end{align*}
for some $\omega_i \in (V_n, Y_n)$. Here, the first term is a function of $\X^n$, and does not contribute when computing the covariance under the law $Y_n \mid \X^n$.

By plugging the expansion of $\tanh(\beta Y_n + \xxi_n^\top \X_i)$ in \eqref{eq:cov} and recalling the definition of $\mathbf{D}_n$, we have
\begin{equation}\label{eq:B_n expansion}
\begin{aligned}
     & \mathbf{D}_n = \sum_{i \neq j} \X_i  \Cov^{\QQ_{\xxi_n}}(W_i, W_j) \X_j^\top \\
    =& \beta^2  \left( \sumin \X_i \sech^2(\beta V_n + \xxi_n^\top \X_i) \right)  \Var_{\xxi_n}^{Y}(Y_n - V_n) \left( \sum_{j \neq i} \X_j \sech^2(\beta V_n + \xxi_n^\top \X_j) \right)^\top \\
    &+ \beta^3 \left(\sumin \X_i \sech^2(\beta V_n + \xxi_n^\top \X_i)\right) \left(\sum_{j \neq i} \X_j \Cov_{\xxi_n}^{Y} (Y_n - V_n, (Y_n - V_n)^2 (\sech^2)'(\beta \omega_n + \xxi_n \X_j)) \right)^\top \\
    &+ \beta^3 \left(\sumin \X_i \Cov_{\xxi_n}^{Y} (Y_n - V_n, (Y_n - V_n)^2 (\sech^2)'(\beta \omega_n + \xxi_n \X_i)) \right) \left( \sum_{j \neq i} \X_j \sech^2(\beta V_n + \xxi_n^\top \X_j) \right)^\top \\
    &+ O_p \left(n^2 \EE_{\xxi_n}^{Y}(Y_n - V_n)^4 \right).
\end{aligned}
\end{equation}
Note that Lemma \ref{lem:W bar moment bound low tmp} gives the following bounds: %
$$n \Var_{\xxi_n}^{Y}(Y_n - V_n) \xp \frac{1}{\beta(1 - \beta \alpha_0)}, ~~\EE_{\xxi_n}^{Y}(Y_n - U_n)^4 = O_p \Big(\frac{1}{n^2}\Big),$$
and
$$\Cov_{\xxi_n}^{Y}(Y_n - U_n, (Y_n - U_n)^2 (\sech^2)'(\beta V_n + \omega_j \X_j)) 
\lesssim \EE_{\xxi_n}^{Y} |Y_n - U_n|^3 = O_p\Big( \frac{1}{n \sqrt{n}}\Big).$$
Hence, only the first term in \eqref{eq:B_n expansion} contributes for $\mathbf{D}_n/n$. Because $\xxi_n \xp \ttheta_0$ and Lemma \ref{lem: u n consistency} gives $V_n \xp m$, we can apply the LLN in Lemma \ref{lem:ULLN} to write
\begin{align*}
    \frac{\sumin \X_i \sech^2(\beta V_n + \xxi_n^\top \X_i)}{n} \xp \EE_{\theta_0} \X \sech^2(\beta m + \ttheta_0^\top \X) = \alpha_1.
\end{align*}
Thus, we have
\begin{align}
    &\frac{\mathbf{D}_n}{n} \label{eq:lower bound variance B_n} \\
    =& \left( \frac{\beta \sumin \X_i \sech^2(\beta V_n + \xxi_n^\top \X_i)}{n} \right)  n\Var_{\xxi_n}^{Y}(Y_n - V_n) \left( \frac{\beta \sumin \X_i \sech^2(\beta V_n + \xxi_n^\top \X_i)}{n} \right)^\top + O_p(\frac{1}{\sqrt{n}}) \notag \\
    \xp& \frac{\beta^2 \alpha_1 \alpha_1^\top}{\beta(1 - \beta \alpha_0)} = \frac{\ggamma_{1,2} \ggamma_{1,2}^\top}{\gamma_{1,1}}. \notag
\end{align}

To conclude Claim 3, we sum up \eqref{eq:lower bound variance A_n} and \eqref{eq:lower bound variance B_n} to get
$$\lim_{n \to \infty} \frac{\Var_{\xxi_n} (\sumin \X_i W_i)}{n} \to \alpha_2 + \frac{\ggamma_{1,2} \ggamma_{1,2}^\top}{\gamma_{1,1}} = \I - I_{\beta}(\ttheta_0).$$
For the last equality, we are using the definition of $I_{\beta}(\ttheta_0)$ and the fact that $\ggamma_{2,2} = \I - \alpha_2$.
\end{proof}

Finally, we prove Corollary \ref{cor:low tmp} via the same line of arguments as in Corollary \ref{cor:high tmp}.
\begin{proof}[Proof of Corollary \ref{cor:low tmp}]
    Recalling the expansion $\tilde{\Delta}_{n, \ttheta_0, \beta}= -\sqrt{n}(- \ddelta F_1(m, \ttheta_0) + F_2(m, \ttheta_0)) + o_p(1)$
    from \eqref{eq:Delta expansion}
    and that for $\sqrt{n} (\thetamf - \ttheta_0)$ from \eqref{eq:upper bound expansion low tmp}, we can write
    $$\sqrt{n} (\thetamf - \ttheta_0) = I_\beta(\ttheta_0)^{-1} \tilde{\Delta}_{n, \ttheta_0, \beta} +o_p(1).$$
    Using the LAN expansion and the limiting distribution of $\tilde{\Delta}_{n, \ttheta_0, \beta}$ in Theorem \ref{thm:lower bound}, we have
    $$ \begin{pmatrix}
        \vspace{2mm}
        \sqrt{n} (\thetamf - \ttheta_0) \\
        \log \frac{d P_{\ttheta_n}}{d P_{\ttheta_0}}
    \end{pmatrix} \xrightarrow[P_{\ttheta_0,\beta}^\text{CW}]{d} N_{d+1} \left(\begin{pmatrix}
        \mathbf{0}_d \\
        - \frac{1}{2}\bh^\top I_\beta(\ttheta_0) \bh
    \end{pmatrix}, \begin{pmatrix}
        I_\beta(\ttheta_0)^{-1} & \bh \\
        \bh^\top & \bh^\top I_\beta(\ttheta_0) \bh
    \end{pmatrix} \right).$$
    By Le Cam's first Lemma, $P_{\ttheta_n}$ and $P_{\ttheta_0}$ are mutually contiguous, and Le Cam's third Lemma gives
    $$\sqrt{n}(\thetamf - \ttheta_0) \xrightarrow[P_{\ttheta_n,\beta}]{d} N_d(\bh, I_\beta(\ttheta_0)^{-1}).$$
    Hence, $\thetamf$ is regular.

\end{proof}

\subsection{Proof of auxiliary lemmas}\label{sec:proof of lemmas}
\subsubsection{Proof of the conditional ULLN and CLTs}\label{subsec:ulln clt proof}

We first prove the conditional ULLNs in Lemmas \ref{lem:ULLN high} and \ref{lem:ULLN} together. For simplicity, we only prove the claims where $f$ takes values in $\mathbb{R}$. Here, the main idea is to decompose $$\sumin f(\X_i, \psi) = \sumin \left[f(\X_i, \psi) - \EE[f(\X_i, \psi) \mid Z_i]\right] + \sumin \EE[f(\X_i, \psi) \mid Z_i],$$ this decomposition will appear again for proving other lemmas as well. The first term of the RHS concentrates due to the conditional independence of $\X_i \mid \Z^n$. Under the setting of \cref{lem:ULLN high} (with an even function $f$), the second term becomes exactly zero. Under the low temperature setting of \cref{lem:ULLN}, the second term boils downs to controlling $\bar{Z}$, and we use the following CLT for $\bar{Z}$.

    \begin{lemma}[Thm 1.2 in \citep{deb2023fluctuations}]\label{lem:Ising CLT}
        Suppose $\beta > 1$, $\A_n$ is mean-field, approximately regular, and well-connected. Then, for $\Z^n \sim \QQ_{0,\beta,\A_n}$, we have
        $$\sqrt{n}(\bar{Z} - m) \mid (\bar{Z} > 0) \xd N(0, C(\beta)).$$
        Here, the constant $C(\beta)$ is defined in part (c) of \cref{defi:sigma}.
    \end{lemma}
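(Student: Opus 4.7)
As noted in the statement, this lemma is taken from Theorem 1.2 of \cite{deb2023fluctuations}, so one can simply invoke that reference. If I were to reprove it from scratch, my plan would be as follows.

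The strategy is to first reduce the general mean-field case to the classical Curie-Weiss model via a universality argument, and then to deduce the Curie-Weiss CLT by Laplace asymptotics. For the universality step, the well-connectedness assumption supplies a spectral gap $\lambda_2(\A_n)/\lambda_1(\A_n) < 1 - c$ for some $c > 0$, so the top eigenvector $\mathbf{v}_1$ carries the unique ``thermodynamic'' fluctuation direction. Combined with approximate regularity (which forces $\mathbf{v}_1 \approx \mathbf{1}/\sqrt{n}$) and the mean-field condition \eqref{eq:mean field assumption} (which controls quadratic functionals of $\Z^n$ in the orthogonal subspace), the log-partition function and joint moments of $\bar{Z}$ should match those of the Curie-Weiss model up to errors negligible on the $\sqrt{n}$ scale; this is essentially the universality of \cite{basak2017universality}.

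For the Curie-Weiss computation itself, the marginal law of $\bar{Z}$ is explicit: writing $k = \#\{i : Z_i = 1\}$, one has
\begin{equation*}
\QQ_{0,\beta}^{\text{CW}}(\bar{Z} = u) \propto \binom{n}{(1+u)n/2} e^{n\beta u^2 / 2} = \exp\bigl[-n \tilde{g}(u) + O(\log n)\bigr],
\end{equation*}
where $\tilde{g}(u) := H(u) - \tfrac{\beta u^2}{2}$ with $H$ the binary entropy from the main text. Direct differentiation yields $\tilde{g}'(u) = \textnormal{arctanh}(u) - \beta u$, whose zeros on $(-1,1)$ are exactly $\{0, \pm m\}$ with $m = \tanh(\beta m)$, and
\begin{equation*}
\tilde{g}''(m) = \frac{1}{1-m^2} - \beta = \frac{1 - \beta(1-m^2)}{1-m^2} > 0
\end{equation*}
for $\beta > 1$, so that $\pm m$ are the two modes of $\bar{Z}$ while $0$ is a saddle. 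Conditioning on $\bar{Z} > 0$ localizes the mass to a $O(n^{-1/2})$ neighborhood of $+m$, and a standard Laplace / local CLT argument applied to the one-dimensional density then delivers
\begin{equation*}
\sqrt{n}(\bar{Z} - m) \mid (\bar{Z} > 0) \xd N\bigl(0, 1/\tilde{g}''(m)\bigr) = N\bigl(0, C(\beta)\bigr).
\end{equation*}

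The main obstacle is the universality step, i.e., controlling the contribution of sub-top eigendirections of $\A_n$ to fluctuations of $\bar{Z}$ without an explicit Hubbard--Stratonovich formula available for a general low-rank-plus-noise coupling matrix. This requires careful spectral analysis of $\A_n$ together with tight moment bounds for the Ising model on such graphs, and is precisely what \cite{deb2023fluctuations} supplies.
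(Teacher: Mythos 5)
The paper's proof of this lemma is exactly what you say first: it is imported verbatim from Theorem 1.2 of \cite{deb2023fluctuations}, so invoking that reference suffices. Your supplementary sketch is also sound: the Stirling computation gives $\QQ_{0,\beta}^{\text{CW}}(\bar Z \approx u) \propto e^{-n\tilde g(u) + O(\log n)}$ with $\tilde g(u) = H(u) - \beta u^2/2$, $\tilde g''(m) = \frac{1}{1-m^2} - \beta = \frac{1-\beta(1-m^2)}{1-m^2} = 1/C(\beta)$, and conditioning on $\bar Z > 0$ selects the $+m$ mode, so the local Laplace CLT delivers variance $C(\beta)$; and you correctly identify the universality transfer from Curie--Weiss to general mean-field, approximately regular, well-connected $\A_n$ as the genuinely hard step (which \cite{deb2023fluctuations} handles directly rather than literally reducing to Curie--Weiss, but the outcome is the same).
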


\begin{proof}[Proof of Lemmas \ref{lem:ULLN high} and \ref{lem:ULLN}]
For notational simplicity, fix $\ttheta_0$ and omit the dependence of $\ttheta_0$ in the constants $C_a = C_a(\ttheta_0)$ that will appear throughout this proof.
Throughout this proof, let $m^\star \in [0,1]$ be any fixed constant, and let $\EE_{\ttheta_0}^\star$ be the expectation with respect to $\PP_{\ttheta_0}^\star := \frac{1+m^\star}{2} N_d(\ttheta_0, \I) + \frac{1-m^\star}{2} N_d(-\ttheta_0, \I)$.
Let $g(z, \psi) := \EE [f(\X, \psi) \mid Z = z]$, where the expectation is taken under the distribution $\X \mid (Z=z) \equiv N_d(\ttheta_0 z, \I)$. Using these notations, we can write $\EE_{\ttheta_0}^\star f(\X, \psi) = \frac{1+m^\star}{2}g(1,\psi) + \frac{1-m^\star}{2} g(-1, \psi).$ 
Now, by centering each $f(\X_i, \psi)$ by its conditional mean given $Z_i$ (i.e. $g(Z_i, \psi)$), we can decompose 
    \begin{equation}\label{eq:ULLN bound}
    \begin{aligned}
        &\sup_{\psi \in \Psi} \left| \frac{1}{n} \sumin f(\X_i, \psi) - \EE_{\ttheta_0}^\star f(\X, \psi) \right| \\
        \le & \sup_{\psi \in \Psi} \left| \frac{1}{n} \sumin \Big[ f(\X_i, \psi) - g(Z_i, \psi) \Big] \right| + \frac{\left|\bar{Z} - m^\star \right|}{2} \sup_{\psi \in \Psi} 
         |g(1,\psi) - g(-1, \psi)|.
    \end{aligned}
    \end{equation}

Note that the LHS of \eqref{eq:ULLN bound} is exactly the LHS of Lemmas \ref{lem:ULLN high} and \ref{lem:ULLN}, by taking $m^\star = 0$ and $m^\star = m(\beta)$ respectively. 
We first establish a conditional concentration inequality that holds for any distribution of $\Z^n$ and $m^\star$, under the three conditions in \cref{lem:ULLN high}:
\begin{align}\label{eq:LLN conditional concentration}
    \PP \left(\sup_{\psi \in \Psi} \left| \frac{1}{n} \sumin \Big[f(\X_i, \psi) - g(Z_i, \psi)\Big] \right| > \epsilon \mid \Z^n \right) \lesssim \frac{n^{-\frac{1}{k+1}}}{\epsilon^2}, \quad \forall \epsilon > 0.
\end{align}
Here, the constants in $\lesssim$ only depend on $C_1, C_2, C_3$ from the statement of \cref{lem:ULLN high}.
This follows a standard uniform concentration argument for independent random variables, and we postpone the formal proof to the end of the current proof. Assuming \eqref{eq:LLN conditional concentration}, we separately prove Lemmas \ref{lem:ULLN high} and \ref{lem:ULLN}.
\vspace{2mm}

    \noindent \textbf{Proof of \cref{lem:ULLN high}.} Suppose that $f$ is even in $\psi$: $f(\x, \psi) = f(-\x, \psi)$. Then, $\EE_{\ttheta_0} f(\x, \psi)$ is invariant for the choice of $m^\star$, and we can simply take $m^\star = 0$. Since
    $$g(1, \psi) = \EE f(\ttheta_0 + N_d(\mathbf{0}_d, \I), \psi) = \EE f(-\ttheta_0 - N_d(\mathbf{0}_d, \I), \psi) = g(-1, \psi),$$
    the second term in the RHS of \eqref{eq:ULLN bound} is exactly zero.
    Hence, we have
    \begin{equation}
        \PP \left(\sup_{\psi \in \Psi} \left| \frac{1}{n} \sumin \left( f(\X_i, \psi) -\EE_{\ttheta_0} f(\X, \psi) \right) \right| > \epsilon \mid \Z^n \right) \lesssim \frac{n^{-\frac{1}{k+1} }}{\epsilon^2} \to 0.
    \end{equation}

     \noindent \textbf{Proof of \cref{lem:ULLN}.}
     Here, we work under $\beta > 1$, and take $m^\star = m(\beta)$. Without the loss of generality, we only prove the results conditioned on $\bar{\X} \in \Theta_1$. For any fixed $\epsilon> 0$,  set $$A_n := \Big\{ \sup_{ \psi \in \Psi} \left| \frac{1}{n} \sumin \left( f(\X_i, \psi) -\EE_{\ttheta_0} f(\X, \psi) \right) \right| > \epsilon \Big\},$$ and prove that     
     $$\PP \left(A_n: (\bar{\X} \in \Theta_1) \right) \xp 0.$$

     To control the second term in \eqref{eq:ULLN bound}, note that $|g(z, \psi)|\le C_2$ for all $\psi \in \Psi$ and $z = \pm 1$, so 
    $\sup_{\psi \in \Psi} |g(1,\psi) - g(-1, \psi)| \le 2 C_2$. Hence, by using the deterministic inequality \eqref{eq:ULLN bound} and the bound \eqref{eq:LLN conditional concentration}, we have
    \begin{equation}\label{eq:ULLN bound updated}
        \PP \left(A_n \mid \Z^n \right) \lesssim \frac{n^{-\frac{1}{k+1} }}{\epsilon^2} + \mathbf{1} \left(|\bar{Z} - m| > \frac{\epsilon}{2C_2} \right).
    \end{equation} 
    Since
    $$\PP(A_n : (\bar{\X} \in \Theta_1)) \le \underbrace{\PP(A_n \cap (\bar{Z} > 0) : (\bar{\X} \in \Theta_1) )}_{:=(I)} + \underbrace{\PP(\bar{Z} < 0 :(\bar{\X} \in \Theta_1) ) }_{:=(II)},$$
    it suffices to show that both terms are $o_p(1)$.

    Noting that $\PP(\bar{\X} \in \Theta_1) = 1/2$, we bound $(I)$ by
    $$(I) = \frac{\PP(A_n \cap (\bar{Z} > 0) \cap (\bar{\X} \in \Theta_1))}{\PP(\bar{\X} \in \Theta_1)} \le 2\PP(A_n \cap (\bar{Z} > 0)) = \PP(A_n \mid (\bar{Z} > 0)).$$
    It suffices to show $\PP(A_n \mid (\bar{Z} > 0)) \xp 0.$
    But, this is immediate by taking a further expectation on \eqref{eq:ULLN bound updated}, which gives
    $$\PP(A_n \mid (\bar{Z} > 0)) \lesssim \frac{n^{-\frac{1}{k+1} }}{\epsilon^2} + \PP \left(|\bar{Z} - m| > \frac{\epsilon}{2 C_2} \mid (\bar{Z} > 0) \right) \xp 0.$$
    The last convergence follows the follows from \cref{lem:Ising CLT}.

    For $(II)$, it suffices to show that $\PP( \bar{Z} < 0, \bar{\X} \in \Theta_1) \to 0$. %
    Note that
    \begin{align*}
        \PP(\bar{Z} < 0, \bar{\X} \in \Theta_1) &\le \PP(\bar{Z} < - \frac{m}{2}, \bar{\X} \in \Theta_1) + \PP( - \frac{m}{2} < \bar{Z} < 0) \\
        &= \PP(\bar{Z} < - \frac{m}{2}) \PP\Big(\bar{\X} \in \Theta_1 \mid \bar{Z} < - \frac{m}{2}\Big) + \PP( - \frac{m}{2} < \bar{Z} < 0).
    \end{align*}
    Since $\bar{\X} \mid \bar{Z} \equiv \ttheta_0 \bar{Z} + N_d(\mathbf{0}_d, \frac{1}{n} \I)$, $\bar{\X} \mid \bar{Z}$ concentrates around $\ttheta_0 \bar{Z} \in \Theta_2$ and the first term goes to $0$. The second term goes to 0 again by \cref{lem:Ising CLT}. %

    \vspace{2mm}
    \emph{Proof of \eqref{eq:LLN conditional concentration}.} Since $\Psi$ is compact in $\mathbb{R}^k$, we can let $\mathcal{N} := \{\psi_1, \ldots, \psi_{|\mathcal{N}|}\}$ be a $\delta$-net of $\Psi$ with $|\mathcal{N}| \le \left(\frac{3}{\delta}\right)^k$ (see Corollary 4.2.13 in \cite{vershynin2018high} for the existence of a such net). Then, for any $\psi$ such that $\|\psi - \psi_t\| \le \delta$, we have
    $$|\sumin [f(\X_i, \psi) - f(\X_i, \psi_t)]| \le \| \psi - \psi_t \| \sumin h(\X_i) \le \delta \sumin h(\X_i).$$
    Similarly, since $\lVert  \frac{\partial g(Z_i, \psi)}{\partial \psi} \rVert = \lVert \EE [\frac{\partial f(\X, \psi)}{\partial \psi} \mid Z = Z_i]\rVert \le \EE [h(\X) \mid Z = Z_i] \le C_3$, for $\|\psi - \psi_t\| \le \delta$, we have
    $$\left| \sumin (g(Z_i, \psi_t) - g(Z_i, \psi)) \right| \le C_3 n \delta.$$
    Consequently, 
    \begin{align*}
        &\sup_{\psi \in \Psi} \left| \frac{1}{n} \sumin \left( f(\X_i, \psi) -g(Z_i, \psi) \right) \right| \\
        \le &\sup_{\psi \in \Psi} \Bigg( \frac{1}{n}\left|  \sumin \left( f(\X_i, \psi) - f(\X_i, \psi_t) \right)\right| + \frac{1}{n}\left|  \sumin \left( f(\X_i, \psi_t) - g(Z_i, \psi_t) \right) \right| \\
        & \quad \quad + \frac{1}{n}\left| \sumin \left( g(Z_i, \psi_t) - g(Z_i, \psi) \right) \right|\Bigg) \\
        \le & \max_{t \le |\mathcal{N}|} \left(\frac{\delta}{n} \sumin h(\X_i) + \frac{1}{n}\left|  \sumin \left( f(\X_i, \psi_t) - g(Z_i, \psi_t) \right) \right| + C_3 \delta \right) \\
        = & \frac{\delta}{n} \sumin h(\X_i) + \max_{t \le |\mathcal{N}|} \frac{1}{n}\left|  \sumin \left( f(\X_i, \psi_t) - g(Z_i, \psi_t) \right) \right| + C_3 \delta.
    \end{align*}
    Fix $\epsilon > 0$. Since $f(\X_i, \psi)$'s are independent conditioned on $\Z^n$, we can bound 
    \begin{align*}
    &\PP \left(\sup_{\psi \in \Psi} \left| \frac{1}{n} \sumin \left( f(\X_i, \psi) -g(Z_i, \psi) \right) \right| > \epsilon \mid \Z^n \right) \\
    \le & \PP \left( \max_{t \le |\mathcal{N}|} \left| \frac{1}{n} \sumin \left( f(\X_i, \psi_t) -g(Z_i, \psi_t) \right) \right| + \frac{\delta}{n} \sumin h(\X_i) + C_3 \delta > \epsilon  \mid \Z^n \right) \\
    \underset{(*)}{\le} & \sum_{t \le |\mathcal{N}|} \PP\left( |\frac{1}{n} \sumin (f(\X_i,\psi_t) - g(Z_i, \psi_t)|> \frac{\epsilon}{3} \mid \Z^n \right) +  \PP \left( \frac{1}{n} \sumin h(\X_i) > \frac{\epsilon}{3\delta} \mid \Z^n \right)  \\
    \lesssim & \sum_{t \le |\mathcal{N}|} \frac{\sumin \Var(f(\X_i, \psi_t) \mid Z_i)}{\epsilon^2 n^2} + \frac{\delta}{n\epsilon} {\sumin \EE (h(\X_i) \mid Z_i)} \\
    {\le} & \frac{|\mathcal{N}|}{\epsilon^2 n} + \frac{\delta}{\epsilon} \le \frac{1}{\epsilon^2 n \delta^k} + \frac{\delta}{\epsilon}.
    \end{align*}
    The inequality $(*)$ holds for $\delta$ such that $C_3 \delta \le \frac{\epsilon}{3}$. We take $\delta := n^{-\frac{1}{k+1}}$ so that $(*)$ holds for large enough $n$. Then, the bound simplifies to
    $$\PP \left(\sup_{\psi \in \Psi} \left| \frac{1}{n} \sumin \left( f(\X_i, \psi) -g(Z_i, \psi) \right) \right| > \epsilon \mid \Z^n \right) \lesssim \frac{n^{-\frac{1}{k+1} }}{\epsilon^2}.$$
\end{proof}

Now, we prove the CLTs (Lemmas \ref{lem: CLT high}, \ref{lem: CLT}). \cref{lem: CLT high} directly follows by applying a conditional CLT, as the summands are identically distributed.
\begin{proof}[Proof of Lemma \ref{lem: CLT high}]
    Note that $\X_1 \tanh(\ttheta_0^\top \X_1) \mid Z_1$ is identically distributed for $Z_1 = \pm 1$. Thus, the mean and variance of $\X_i \tanh(\ttheta_0^\top \X_i) \mid Z_i$ are deterministic. In particular, using Lemma \ref{lem:uniqueness iid}, we can compute
    $$\EE [\X_1 \tanh(\ttheta_0^\top \X_1) \mid Z_1 = z] = \ttheta_0, ~ \mbox{for all } z = \pm 1.$$ 
    Also, noting that that $\tanh^2(y) + \sech^2(y) = 1$, we have
    $$\Var [\X_1 \tanh(\ttheta_0^\top \X_1) \mid Z_1 = z] = I_0(\ttheta_0), ~ \mbox{for all } z = \pm 1.$$
    Now, the conditional CLT for sums of IID random variables (e.g. see \cite{bulinski2017conditional}) gives 
    $$\sqrt{n} \left(\frac{1}{n} \sumin \X_i \tanh(\ttheta_0^\top \X_i) - \ttheta_0 \right) \mid \Z^n \xd N_d(0, I_0(\ttheta_0)).$$
    The proof is complete, since conditional convergence implies marginal convergence.
\end{proof}

\cref{lem: CLT} is more challenging to prove, as (a) the summands are not identically distributed and (b) we are claiming a statement conditional on $\bar{\X} \in \Theta_1$. We address issue (a) by splitting the summand into two terms, similar to the strategy for proving the ULLN in \cref{lem:ULLN}.
To resolve issue (b), we use the following Lemma to condition on an easier event, which we prove at the end of this subsection.
\begin{lemma}\label{lem:X > 0, Z > 0}
    Under the setting of \cref{lem: CLT}, let $E_n$ be an event that depends on $\X^n, \Z^n$. Then, $$\lim_{n \to \infty} |\PP(E_n, \bar{\X} \in \Theta_1) - \PP(E_n, \bar{Z}>0)| \to 0.$$ 
    Furthermore, for a $\X^n$-measurable random variable $Y_n$ such that $Y_n \mid (\bar{Z} > 0) \xd W$, we have $Y_n : (\bar{\X} \in \Theta_1) \xd W$.
\end{lemma}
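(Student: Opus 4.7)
The plan is to show that the events $\{\bar{\X} \in \Theta_1\}$ and $\{\bar{Z} > 0\}$ coincide up to a set of vanishing probability, and then deduce both conclusions by conditioning. The main ingredient is a symmetrization observation combined with the low-temperature concentration of $\bar{Z}$ already recorded in \cref{lem:Ising CLT}.

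The first step is to bound $\PP(\bar{\X} \in \Theta_1,\, \bar{Z} \le 0)$ and $\PP(\bar{\X} \in \Theta_2,\, \bar{Z} > 0)$, using essentially the same decomposition that appeared at the end of the proof of \cref{lem:ULLN}. Namely, split
\begin{equation*}
\PP(\bar{\X} \in \Theta_1,\, \bar{Z} \le 0) \le \PP(\bar{Z} \le -m/2)\,\sup_{z \le -m/2}\PP(\bar{\X} \in \Theta_1 \mid \bar{Z} = z) + \PP(-m/2 < \bar{Z} \le 0).
\end{equation*}
Conditional on $\bar{Z} = z$, $\bar{\X}$ is $N_d(\ttheta_0 z,\tfrac{1}{n}\I)$, so for $z \le -m/2 < 0$ the mean $\ttheta_0 z$ lies in $\Theta_2$ with distance bounded below by $\|\ttheta_0\| m / 2$ from $\Theta_1$, and the conditional probability decays exponentially in $n$. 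The remaining term $\PP(-m/2 < \bar{Z} \le 0)$ vanishes by \cref{lem:Ising CLT}: $\bar{Z}$ concentrates near $\pm m$ because, by the spin-flip symmetry $\z \mapsto -\z$ of the Ising pmf, $\bar{Z}$ has a symmetric law, so the argument of \cref{lem:Ising CLT} applied on both half-lines yields $\PP(|\bar{Z}| < m/2) \to 0$. The symmetric statement $\PP(\bar{\X} \in \Theta_2,\, \bar{Z} > 0) \to 0$ follows by the coupled sign-flip $(\Z^n, \X^n) \mapsto (-\Z^n, -\X^n)$, which preserves the joint law since the Gaussian noise is centered and $\QQ_0$ is spin-flip invariant.

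From this the first statement is immediate: writing
\begin{equation*}
\PP(E_n, \bar{\X} \in \Theta_1) - \PP(E_n, \bar{Z} > 0) = \PP(E_n, \bar{\X} \in \Theta_1,\, \bar{Z} \le 0) - \PP(E_n, \bar{\X} \notin \Theta_1,\, \bar{Z} > 0),
\end{equation*}
each term is bounded by one of the vanishing quantities above, uniformly in $E_n$. For the weak convergence claim, I first note that the same sign-flip symmetry of the joint law $(\Z^n, \X^n)$ forces $\bar{\X}$ to have a distribution symmetric about $\mathbf{0}_d$, and since $\bar{\X}$ admits a density, $\PP(\bar{\X} \in \Theta_1) = 1/2$ exactly; similarly $\PP(\bar{Z} > 0) = 1/2 + o(1)$. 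Applying the first part with $E_n = \{Y_n \le y\}$ at any continuity point $y$ of $W$ and dividing by $\PP(\bar{\X} \in \Theta_1) = 1/2$ gives
\begin{equation*}
\PP(Y_n \le y \mid \bar{\X} \in \Theta_1) = \PP(Y_n \le y \mid \bar{Z} > 0) + o(1) \to \PP(W \le y),
\end{equation*}
which is the desired conclusion.

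I do not expect any serious obstacle: the Gaussian tail bound for $\bar{\X} \mid \bar{Z}$ is straightforward, and the concentration $\PP(|\bar{Z}| < m/2) \to 0$ is already implicit in \cref{lem:Ising CLT} together with the spin-flip symmetry. The only subtlety is making sure that the joint symmetry of $(\Z^n, \X^n)$ under sign-flip is invoked correctly to pin down $\PP(\bar{\X} \in \Theta_1) = 1/2$ and to reduce $\PP(\bar{\X} \in \Theta_2, \bar{Z} > 0) \to 0$ to its mirror statement; after that the argument is essentially bookkeeping.
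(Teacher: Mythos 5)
Your proof is correct and follows essentially the same route as the paper's. The paper's own proof is shorter because it simply cites \cref{lem:ULLN} for the two vanishing probabilities $\PP(\bar{\X}\in\Theta_1,\,\bar{Z}<0)\to 0$ and $\PP(\bar{\X}\notin\Theta_1,\,\bar{Z}>0)\to 0$, then takes $E_n=\{Y_n\le t\}$ and normalizes by $\PP(\bar{\X}\in\Theta_1)\to\tfrac12$, $\PP(\bar{Z}>0)\to\tfrac12$; you re-derive those same vanishing probabilities from scratch (Gaussian tail for $\bar{\X}\mid\bar{Z}$ plus the $\bar{Z}$ concentration from \cref{lem:Ising CLT} and spin-flip symmetry), which is precisely the argument appearing at the end of the proof of \cref{lem:ULLN}. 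One small caveat worth noting, though it is inherited from the paper rather than introduced by you: the Gaussian separation argument ``$\ttheta_0 z$ lies a fixed distance from $\Theta_1$ for $z\le -m/2$'' implicitly requires the first nonzero coordinate structure to give genuine separation, i.e.\ it silently assumes $(\ttheta_0)_1>0$; if $(\ttheta_0)_1=0$ then $\{\bar{\X}\in\Theta_1\}$ is a.s.\ $\{\bar{X}_1>0\}$, which is independent of $\Z^n$, and the claimed convergence fails. The paper has the same gap in the proof of \cref{lem:ULLN}, so this does not distinguish your argument from theirs.
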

We also need the following lemma to sum up two limiting distributions.
\begin{lemma}\label{lem:converging together}
    Let $A_n, B_n$ be random variables, and let $\mathcal{F}_n, \mathcal{G}_n$ be $\sigma$-algebras such that $\mathcal{G}_n \subseteq \mathcal{F}_n$ for each $n$. Assuming that
    $$A_n \mid \mathcal{F}_n \xd N(0, 1), \quad B_n \mid \mathcal{G}_n \xd N(0, \tilde{\tau}),$$
    we have $A_n + B_n \mid \mathcal{G}_n \xd N(0, 1+\tilde{\tau}).$
\end{lemma}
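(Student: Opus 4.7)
The plan is to pass to characteristic functions and exploit the tower rule once I observe that in the intended application $B_n$ is automatically $\mathcal{F}_n$-measurable (in the proof of \cref{lem: CLT}, $B_n$ will be a function of $\bar{Z}$, which generates $\mathcal{G}_n$, and $\mathcal{G}_n \subseteq \mathcal{F}_n$). This measurability is the unstated but essential hypothesis for the factorization below.

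Fix $t \in \mathbb{R}$; the goal is to prove
$$\EE\bigl[e^{it(A_n+B_n)} \mid \mathcal{G}_n\bigr] \xp e^{-t^2(1+\tilde\tau)/2},$$
because pointwise convergence of characteristic functions (in probability) upgrades to conditional convergence in distribution in the sense used throughout this paper. First I would apply the tower property together with the $\mathcal{F}_n$-measurability of $e^{itB_n}$ to write
$$\EE\bigl[e^{it(A_n+B_n)} \mid \mathcal{G}_n\bigr] = \EE\Bigl[\,e^{itB_n}\,\EE\bigl[e^{itA_n}\mid \mathcal{F}_n\bigr]\,\Big|\, \mathcal{G}_n\Bigr].$$
The first hypothesis, applied to the bounded continuous functions $\cos(t\,\cdot)$ and $\sin(t\,\cdot)$, gives $\EE[e^{itA_n}\mid \mathcal{F}_n] = e^{-t^2/2} + \varepsilon_n$ with $\varepsilon_n \xp 0$ and $|\varepsilon_n|\le 2$.

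Substituting this back splits the expression into a main term and a remainder:
$$\EE\bigl[e^{it(A_n+B_n)} \mid \mathcal{G}_n\bigr] = e^{-t^2/2}\,\EE\bigl[e^{itB_n}\mid \mathcal{G}_n\bigr] + \EE\bigl[e^{itB_n}\varepsilon_n \mid \mathcal{G}_n\bigr].$$
For the remainder, dominated convergence yields $\EE|\varepsilon_n| \to 0$, and then Jensen gives $\EE\bigl|\EE[e^{itB_n}\varepsilon_n\mid \mathcal{G}_n]\bigr| \le \EE|\varepsilon_n| \to 0$, so the remainder is $o_p(1)$. For the main term, the second hypothesis applied in the same way yields $\EE[e^{itB_n}\mid \mathcal{G}_n] \xp e^{-t^2\tilde\tau/2}$. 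Combining the two limits gives $\EE[e^{it(A_n+B_n)}\mid \mathcal{G}_n] \xp e^{-t^2(1+\tilde\tau)/2}$, which is the desired conclusion.

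The main obstacle, such as it is, is the bookkeeping around conditional characteristic functions: one must be sure the factorization $\EE[e^{itA_n}e^{itB_n}\mid \mathcal{F}_n] = e^{itB_n}\EE[e^{itA_n}\mid \mathcal{F}_n]$ is legitimate, which rests entirely on $B_n$ being $\mathcal{F}_n$-measurable. Once this is granted, the remainder-control step via $\EE|\varepsilon_n|\to 0$ is the only place where one passes from convergence in probability to a bound on a conditional expectation, and Jensen handles that cleanly without requiring any additional uniform integrability input.
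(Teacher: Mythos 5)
Your proof is correct and follows exactly the characteristic-function-plus-tower-property route that the paper cites as the ``standard argument'' (deferring to an external reference rather than spelling it out), so this is the natural reconstruction of the intended proof. You are also right to flag that the lemma as stated tacitly requires $B_n$ to be $\mathcal{F}_n$-measurable --- without it the conclusion fails (e.g. take $\mathcal{F}_n = \mathcal{G}_n$ trivial, $A_n \sim N(0,1)$, $B_n = -A_n$) --- and this hypothesis does hold in the application inside the proof of \cref{lem: CLT}, where $B_n$ is a deterministic function of $\bar{Z}$ and $\mathcal{F}_n = \sigma(\bar{Z})$.
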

\noindent The proof of this lemma follows from standard arguments using characteristic functions and tower property (see e.g. Lemma A.13 in \cite{lee2025clt}).

\begin{proof}[Proof of Lemma \ref{lem: CLT}]
 We first prove the result conditioned on $\bar{\X} \in \Theta_1$. Write $\mathbf{a} := (a_1, \mathbf{a}_2^\top)^\top \in \mathbb{R}^{d+1}$, where $a_1, \mathbf{a}_2$ is a scalar and $d$-dimensional vector, respectively. Recall the notation $\nabla M_n = \begin{pmatrix}
     F_1 \\ F_2
 \end{pmatrix}$ from part (b) of \cref{defi:sigma}.
 By the Cramer-Wold device, it suffices to show the one-dimensional convergence
 \begin{align}\label{eq:clt wts}
     \sqrt{n} \mathbf{a}^\top \nabla M_n(m, \ttheta_0) : (\bar{\X} \in \Theta_1) \xd N\left(0, \mathbf{a}^\top \Sigma \mathbf{a} \right)
 \end{align}
 holds for all $\mathbf{a}$. For this goal, fix any $ \mathbf{a}$ and define the function $$f(\bx) := \mathbf{a}^\top \nabla M_n(m, \ttheta_0) =  - a_1 \beta \tanh(\beta m + \ttheta_0^\top \bx) - \mathbf{a}_2^\top \bx \tanh(\beta m + \ttheta_0^\top \bx).$$
 Here, we omit the dependence on $\mathbf{a}$ for convenience.
 Using the notation $f$ and the identity \eqref{eq:m,theta identies}, the statement \eqref{eq:clt wts} simplifies to
 \begin{align}\label{eq:clt wts simplified}
     \sqrt{n} \left(\frac{1}{n} \sumin f(\X_i) - \EE_{\ttheta_0} f(\X) \right) : (\bar{\X} \in \Theta_1) \xd N \left(0, \mathbf{a}^\top \Sigma \mathbf{a} \right).
 \end{align}

To prove \eqref{eq:clt wts simplified}, we first introduce some additional notations. For $z = \pm 1$, define
\begin{align*}
    g_z &:= \EE [f(\X) \mid Z=z], \\
    \tau_z &:= \Var(f(\X) \mid Z = z), \\
    \tau &:= \EE_{Z \sim \text{Rad}(\frac{1+m}{2})} [\Var(f(\X) \mid Z = z)] = \frac{1+m}{2} \tau_1 + \frac{1-m}{2} {\tau_{-1}}.
\end{align*}
By adding and subtracting the conditional means $\EE[f(\X_i) \mid Z_i]$, the LHS of \eqref{eq:clt wts simplified} becomes
    \begin{equation}\label{eq:-f_n(theta)}
    \begin{aligned}
        \frac{1}{\sqrt{n}} \sumin (f(\X_i) - \EE_{\ttheta_0} f(\X)) &= {\frac{1}{\sqrt{n}} \sumin (f(\X_i) - \EE [f(\X_i) \mid Z_i ])}  + \sqrt{n} (\bar{Z} - m) \frac{g_1 - g_{-1}}{2}.
    \end{aligned}
    \end{equation}
    
    Now, we control each term separately.    
    Define
    \begin{align*}
        A_n &:= \frac{ \frac{1}{\sqrt{n}} \sumin [f(\X_i) - \EE (f(\X_i) \mid Z_i )] }{\sqrt{\frac{1}{n} \sumin \Var(f(\X_i) \mid Z_i)}} = \frac{ \frac{1}{\sqrt{n}} \sumin [f(\X_i) - \EE (f(\X_i) \mid Z_i )] }{\sqrt{\tau + \frac{\bar{Z}-m}{2} (\tau_1 - \tau_{-1})}}, \\
        B_n &:= \frac{\sqrt{n} (\bar{Z} - m) \frac{g_1 - g_{-1}}{2}}{\sqrt{\tau + \frac{\bar{Z}-m}{2} (\tau_1 - \tau_{-1})}},
    \end{align*}
    so that the LHS of \eqref{eq:clt wts simplified} is equal to $(A_n + B_n) \sqrt{\tau + \frac{\bar{Z}-m}{2} (\tau_1 - \tau_{-1})}$.
    Since $\X^n$ is independent given $\Z^n$, the conditional CLT gives
    $$A_n \mid \Z^n \xd N(0,1).$$
    As this statement is true for any distribution $\Z^n$, the tower property gives
    \begin{equation*}%
        A_n \mid \bar{Z}, (\bar{Z} > 0) \xd N(0,1).
    \end{equation*}
    Next, the limiting distribution of $B_n$ can be derived using \cref{lem:Ising CLT}:
    \begin{equation*}%
        B_n \mid (\bar{Z} > 0) \xd \frac{g_1 - g_{-1}}{2 \sqrt{\tau}} \times N \left(0, C(\beta) \right) \equiv N(0, \tilde{\tau}),
    \end{equation*}
    where $\tilde{\tau} := \frac{(g_1 - g_{-1})^2 C(\beta)}{4 \tau}$ denotes the limiting variance. Note that we have used Slutsky's theorem alongside the following limit for the denominator of $B_n$:
    $$\tau + \frac{\bar{Z}-m}{2} (\tau_1 - \tau_{-1}) \mid (\bar{Z} > 0) \xp \tau.$$

    Now, we combine the above limits for $A_n$ and $B_n$ via \cref{lem:converging together}, which gives %
    the CLT for $ A_n + B_n$:
    $$ A_n + B_n \mid (\bar{Z} > 0) \xd N\left(0, 1 + \tilde{\tau} \right).$$
    By again using Slutsky's theorem to simplify the denominator, we have
    $$\frac{1}{\sqrt{n}} \sumin (f(\X_i) - \EE_{\ttheta_0} f(\X)) \mid (\bar{Z} > 0) \xd N(0, \tau (1+\tilde{\tau})).$$
   Here, we can change the event being conditioned on from $\bar{Z} > 0$ to $\bar{\X} \in \Theta_1$ by applying Lemma \ref{lem:X > 0, Z > 0}.

    It finally remains to show that the variance matches with that in \eqref{eq:clt wts}, that is 
    $$\tau (1+\tilde{\tau}) = \mathbf{a}^\top \Sigma \mathbf{a}.$$
    By the definition of $\Sigma$ in \cref{defi:sigma}, we have
    \begin{align*}
    \mathbf{a}^\top \Sigma \mathbf{a}  &= \EE_{Z \sim \text{Rad}(\frac{1+m}{2})} \Big[\Var \Big( 
        (a_1 \beta + \mathbf{a}_2^\top \X)\tanh(\beta m + \ttheta_0^\top \X) \mid Z \Big) \Big] \\
    &+ \frac{C(\beta)}{4} \big( a_1 \beta(\mu_1 - \mu_{-1})+ \mathbf{a}_2^\top (\boldsymbol{\nu}_1 - \boldsymbol{\nu}_{-1})\big) \big( a_1 \beta(\mu_1 - \mu_{-1})+ \mathbf{a}_2^\top (\boldsymbol{\nu}_1 - \boldsymbol{\nu}_{-1})\big)^\top \\
    &= \tau + \tau \tilde{\tau}.
\end{align*}
The final equality follows by the definition of $f(\bx)$ and noting that $$g_z = - a_1 \beta \mu_z - \mathbf{a}_2^\top \boldsymbol{\nu}_z, ~\mbox{for all } z = \pm 1.$$

The result conditioned on $\bar{\X} \in \Theta_2$ follows from the same arguments, where we make the following modifications:
$$m \to -m, \quad \EE_{\ttheta_0} \to \EE_{-\ttheta_0}.$$
After these updates, the limiting variance changes from $\Sigma$ to $\tilde{\Sigma}$. We omit the details.
\end{proof}

\begin{proof}[Proof of Lemma \ref{lem:X > 0, Z > 0}]
The first part follows by using Lemma \ref{lem:ULLN} and noting that
\begin{align*}
    \PP(E_n, \bar{\X} \in \Theta_1) - \PP(E_n, \bar{Z}>0) &\le \PP(\bar{\X} \in \Theta_1, \bar{Z} < 0) \to 0, \\
    \PP(E_n, \bar{Z}>0) - \PP(E_n, \bar{\X} \in \Theta_1) &\le \PP(\bar{\X} \notin \Theta_1, \bar{Z} > 0) \to 0.
\end{align*}
The second result follows by taking $E_n := \{ Y_n \le t\}$ and noting that $\PP(\bar{Z} > 0) \to \frac{1}{2}, \PP(\bar{\X} \in \Theta_1) \to \frac{1}{2}$.
\end{proof}

\subsubsection{Proof of Lemmas \ref{lem:conditional sum} and \ref{lem:RFIM moment bound}}\label{subsec:Ising model concentration}

{
To prove \cref{lem:conditional sum}, we again decompose $\phi(\X_i)$ as $\left(\phi(\X_i) - \EE[\phi(\X_i) \mid Z]\right) + \EE[\phi(\X_i) \mid Z_i]$. Unlike the case for the ULLN and CLTs, the quantities we wish to control are \textit{quadratic} forms of $\Z^n$. Hence, we use the following Lemma, which is a standard second moment bound for quadratic forms of $\Z^n \sim \QQ_{0, \beta, \A_n}$. We postpone its proof to the end of this subsection.
}

\begin{lemma}\label{lem:Ising moment concentration}
    Suppose $\beta < 1$ and let $\Z^n \sim \QQ_{0, \beta, \A_n}$. Then, the following bounds hold.
    \begin{enumerate}[(a)]
        \item $\EE (\Z^\top \A_n \Z)^2 = O(n^2 \alpha_n^2 + n\an)$
        \item $\EE (\Z^\top \A_n^2 \Z)^2 %
        = O(n^2 \alpha_n^2)$.
    \end{enumerate}
    In particular, under \eqref{eq:mean field assumption}, the RHS of (a) and (b) can be replaced with $o(n)$. %
\end{lemma}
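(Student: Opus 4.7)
The plan is to decompose $\EE(\Z^\top B \Z)^2 = \Var(\Z^\top B \Z) + (\EE[\Z^\top B \Z])^2$ for $B \in \{\A_n, \A_n^2\}$ and to bound mean and variance separately using two standard high-temperature tools. For the mean, I would invoke the ferromagnetic pairwise correlation bound $0 \leq \EE[Z_i Z_j] \leq [(I - \beta \A_n)^{-1}]_{ij}$, which holds for $\beta \|\A_n\|_{\mathrm{op}} < 1$ (guaranteed here by $\beta < 1$ and $\|\A_n\|_\infty \to 1$) via a Simon--Lieb / Aizenman path-sum inequality, and which can alternatively be derived directly from the DLR fixed-point identity $\EE[Z_iZ_j] = \EE[Z_j \tanh(\beta m_i)]$ by linearising $\tanh(x) \leq |x|$ and iterating. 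For the variance, I would use the Glauber-dynamics Poincar\'{e} inequality for the Ising model at $\beta<1$, $\Var_{\QQ_0}(F) \leq \frac{C(\beta)}{4}\sum_i \EE[(\partial_i F)^2]$, where $\partial_i F$ denotes the change of $F$ under flipping $Z_i$; this is standard under Dobrushin uniqueness and is also packaged in the concentration framework developed in \cite{lee2024rfim}.

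With these inputs in hand, applying the correlation bound to (a) gives $|\EE[\Z^\top \A_n \Z]| \leq \sum_{k \geq 2} \beta^{k-1} |\mathrm{tr}(\A_n^k)|$ after using $\mathrm{tr}(\A_n) = 0$. Since $\|\A_n\|_{\mathrm{op}} \leq 1$, each $|\mathrm{tr}(\A_n^k)|$ for $k \geq 2$ is bounded by $\mathrm{tr}(\A_n^2) \leq n\alpha_n$, and the geometric sum in $\beta < 1$ yields $|\EE[\Z^\top \A_n \Z]| = O(n\alpha_n)$. The same argument with $\A_n$ replaced by $\A_n^2$ (also using $\mathrm{tr}(\A_n^2) \leq n\alpha_n$ and $\|\A_n^2\|_{\mathrm{op}} \leq 1$) gives $\EE[\Z^\top \A_n^2 \Z] = O(n\alpha_n)$, so the $(\EE)^2$ contribution is $O(n^2 \alpha_n^2)$ in both (a) and (b).

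For the variances, I would compute the discrete gradients explicitly. For $F = \Z^\top \A_n \Z$, flipping $Z_i$ changes $F$ by $-4Z_i (\A_n \Z)_i$, so $\sum_i (\partial_i F)^2 = 16\|\A_n \Z\|^2 = 16\,\Z^\top \A_n^2 \Z$; the Poincar\'{e} inequality combined with $\EE[\Z^\top \A_n^2 \Z] = O(n\alpha_n)$ gives $\Var(F) = O(n\alpha_n)$, which closes (a). For $G = \Z^\top \A_n^2 \Z$, which has non-zero diagonals, the gradient is $\partial_i G = 4\sum_{j \neq i}(\A_n^2)_{ij} Z_j$, and the estimate $\sum_i(\partial_i G)^2 \lesssim \|\A_n\|_{\mathrm{op}}^2\,\Z^\top \A_n^2 \Z + \sum_i (\A_n^2)_{ii}^2 \lesssim G + n\alpha_n^2$ yields $\Var(G) = O(n\alpha_n)$ after taking expectations. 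Since the scaling $\|\A_n\|_\infty \to 1$ forces $\alpha_n \geq c/n$ by Cauchy--Schwarz, we have $n\alpha_n \lesssim n^2 \alpha_n^2$ asymptotically, and hence $\EE G^2 = O(n^2 \alpha_n^2)$ as required for (b).

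The main hurdle I anticipate is pinning down the exact form of the correlation bound $\EE[Z_iZ_j] \leq [(I - \beta\A_n)^{-1}]_{ij}$ in the paper's conventions. If that precise inequality is not directly at hand, the fallback is to prove the weaker-but-sufficient trace estimate $\sum_{i,j} A_n(i,j) \EE[Z_iZ_j] = O(\mathrm{tr}(\A_n^2))$ by iterating the DLR identity and using the universal bound $|\mathrm{tr}(\A_n^k)| \leq \mathrm{tr}(\A_n^2)$ for $k \geq 2$; this is enough to close both (a) and (b) with the stated constants.
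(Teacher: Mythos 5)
Your proposal is correct in outline but follows a genuinely different route from the paper, so a comparison is worthwhile.

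The paper splits $\EE(\Z^\top \A_n \Z)^2$ into variance and squared mean just as you do, but then outsources both pieces: the variance bound $\Var(\Z^\top \A_n \Z) \lesssim \|\A_n\|_F^2 \le n\alpha_n$ comes directly from Theorem 2.1 of Gheissari, Lubetzky and Peres (2018), and the mean is handled by a \emph{single} DLR conditioning step, $|\EE[\Z^\top \A_n \Z]| = |\EE \sum_i \tanh(\beta m_i(\Z))\, m_i(\Z)| \le \beta \EE \sum_i m_i(\Z)^2$, after which the paper \emph{invokes part (b)} (via Cauchy--Schwarz) to bound $\EE \sum_i m_i^2 = \EE[\Z^\top \A_n^2 \Z] = O(n\alpha_n)$. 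Part (b) itself is simply cited from Deb--Mukherjee (2023) or Lee--Mukherjee (2024). So the logical order in the paper is (b) first, then (a). You instead build a self-contained argument from two ingredients — the Glauber--Poincar\'e inequality and the two-point correlation bound $0 \le \EE[Z_iZ_j] \le [(I-\beta\A_n)^{-1}]_{ij}$ — and prove (a) and (b) in parallel; the Cauchy--Schwarz observation $\alpha_n \gtrsim 1/n$ (so $n\alpha_n \lesssim n^2\alpha_n^2$) is a nice touch to collapse the bound in (b) to the stated form. Both the Gheissari et al.\ concentration result and your Poincar\'e inequality sit on the same Dobrushin condition $\beta\|\A_n\|_\infty < 1$, which holds here eventually; your version is weaker (second moment only) but that is all that is needed.

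One point to tighten: the claim that the correlation bound $\EE[Z_iZ_j] \le [(I-\beta\A_n)^{-1}]_{ij}$ ``can alternatively be derived directly from the DLR fixed-point identity by linearising $\tanh(x)\le |x|$ and iterating'' does not work as stated. The identity $\EE[Z_iZ_j] = \EE[Z_j\tanh(\beta m_i)]$ cannot be naively linearised because $Z_j$ and $\tanh(\beta m_i)$ change sign; you cannot conclude $\EE[Z_j\tanh(\beta m_i)] \le \beta\EE[Z_j m_i]$ term by term. The rigorous route is via the Lieb--Simon inequality (or a random-current / Aizenman-type argument), which you also cite, so just lean on that and drop the elementary-iteration claim. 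Note that the paper sidesteps this issue entirely by putting both $Z_i$ and $m_i$ on the same side: conditioning on $Z_{(-i)}$ turns $Z_i m_i$ into $\tanh(\beta m_i)\, m_i$, which is a \emph{nonnegative} quantity bounded by $\beta m_i^2$, so no sign subtlety arises.
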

\begin{proof}[Proof of Lemma \ref{lem:conditional sum}]
    \begin{enumerate}[(a)]
    \item Define $m_i({\Z^n}) := \sumjn A_n(i,j) Z_j$
    and note that
    \begin{equation}
        |\sumjn A_n(i,j) \phi(\X_j)| \le |\sumjn A_n(i,j) (\phi(\X_j) - K Z_j)| + |K| |m_i({\Z^n})|.
    \end{equation}
    Since $\X^n \mid \Z^n$ is independent, we have
    \begin{align*}
        \EE \left[ |\sumjn A_n(i,j) (\phi(\X_j) - K Z_j)|^2 \mid \Z^n \right] & = \Var \left( \sumjn A_n(i,j) \phi(\X_j) \mid \Z^n \right) \\
        = \sumjn A_n(i,j)^2 &\Var( \phi(\X_j) \mid \Z^n ) \le C \sumjn A_n(i,j)^2
    \end{align*}
    for all $i$. Also, Lemma \ref{lem:Ising moment concentration}(b) gives $\EE (\Z^\top \A_n^2 \Z)^2 = \EE (\sumin m_i^2(\Z))^2 \lesssim n\an$.
    The proof is complete by summing the two bounds.
   
    \item By expanding the square and using the independence of $\X^n \mid \Z^n$, we have
    \begin{equation}\label{eq:2nd moment conditional expectation}
    \begin{aligned}
        &\EE \left[ \left( \sumij A_n(i,j) \phi_1(\X_i) \phi_2(\X_j) \right)^2 \mid \Z^n \right] \\
        =&  \EE \left[ \sum_{i,j,k,l} A_n(i,j) A_n(k,l) \phi_1(\X_i) \phi_2(\X_j) \phi_1(\X_k) \phi_2(\X_l) \mid \Z^n \right]  \\
        \lesssim& |\sum_{|\{i,j,k,l\}|=4} A_n(i,j) A_n(k,l) Z_i Z_j Z_k Z_l| + |\sum_{i=k \neq j \neq l} A_n(i,j) A_n(i,l) Z_i^2 Z_j Z_l| \\
        & \quad + |\sum_{i, j} A_n(i,j)^2 Z_i^2 Z_j^2|.
    \end{aligned}
    \end{equation}
    Here, we have omitted displaying the constants arising from moments of $\phi_1, \phi_2$, which only depend on $K, C$.
    Noting that $|Z_i| = 1$, it is easy to control the last two terms:
    \begin{align*}
        \sum_{i=k \neq j \neq l} A_n(i,j) A_n(i,l) Z_i^2 Z_j Z_l &= \sum_{i=k \neq j \neq l} A_n(i,j) A_n(i,l) Z_j Z_l = \Z^\top \A_n^2 \Z, \\
        \sum_{i, j} A_n(i,j)^2 Z_i^2 Z_j^2 &= \sum_{i, j} A_n(i,j)^2 \lesssim n\an.
    \end{align*}
    Hence, by taking an expectation over $\Z^n$ on \eqref{eq:2nd moment conditional expectation} and using Lemma \ref{lem:Ising moment concentration},
    \begin{align*}
        \EE \left( \sumij A_n(i,j) \phi_1(\X_i) \phi_2(\X_j) \right)^2 &\le \EE |\sum_{|\{i,j,k,l\}|=4} A_n(i,j) A_n(k,l) Z_i Z_j Z_k Z_l| + O(n\an) \\
        &\le \EE |\sum_{i,j,k,l} A_n(i,j) A_n(k,l) Z_i Z_j Z_k Z_l| + O(n\an) \\
        &= \EE \left(\Z^\top \A_n \Z \right)^2 + O(n\an) = O(n^2 \alpha_n^2 + n\an).
    \end{align*}
    \end{enumerate}
    \end{proof} %

Next, we prove \cref{lem:RFIM moment bound} using existing moment bounds for RFIMs. 
\begin{proof}[Proof of \cref{lem:RFIM moment bound}]
Recall the mean-field approximation of the log-partition function $\log Z_{n,\beta}^{\text{CW}}(\ttheta, \X^n)$ in \eqref{eq:gibbs variational principle}. We extend \eqref{eq:gibbs variational principle} to Ising models with general graphs $\A_n$ (see e.g. (2.4) and (2.5) in \cite{lee2024rfim}), and let $\bu \in [-1,1]^n$ be the $n$-dimensional mean-field optimizers:
$$\bu := \argmax_{\mathbf{w} \in [-1,1]^n} \left[\frac{\beta}{2} \bw^\top \A_n \bw + \ttheta^\top \sumin \X_i w_i - \sumin H(w_i)\right].$$
Here, $H$ is the binary entropy function from \eqref{eq:gibbs variational principle}. Also, set 
$$c_i:= \ttheta^\top \X_i, \quad m_i(\W^n) := \sum_{j \neq i} A_n(i,j) W_j, \quad s_i:= \sum_{j \neq i} A_n(i,j)u_j.$$ 
Under these notations, the following conclusions hold, where the hidden constants only depend on $\beta < 1$:
\begin{itemize}
    \item $u_i = \tanh(s_i + c_i),$
    \item $\EE^{\QQ_{\ttheta}} \Big[d_i(W_i-u_i)\Big]^2 \lesssim \lVert \mathbf{d} \rVert^2 (1 + n \an^2),$
    \item $\EE^{\QQ_{\ttheta}}\sumin (m_i(\W^n) - s_i)^2 \lesssim n \an,$
    \item $\sumin s_i^2 \lesssim C_1(\ttheta, \X^n)$.
\end{itemize}
Here, the first equation follows from re-writing the first order conditions of the optimization. The second, third, fourth equations follow from Theorem 2.3, Lemma 3.2(a), Lemma 3.3(a) in \cite{lee2024rfim}, respectively. Note that here we consider Ising models with $\pm 1$ valued spins, so the function $\psi_i'(c)$ in \cite{lee2024rfim} simplify to $\psi_i'(c) =\tanh(c)$. Also, note that the hidden constants in \cite{lee2024rfim} only depend on an upper bound of the operator norm of $\beta \A_n$ (see Assumption 2.1(a) in \cite{lee2024rfim}), and here we use \eqref{eq:row sum one scaling} to get the upper bound
$$\beta \|\A_n\| \le \beta \|\A_n\|_{\infty} \to \beta < 1.$$

\begin{enumerate}[(a)]
    \item This is immediate from the third and fourth bullet above:
    $$\EE^{\QQ_{\ttheta}}\sumin m_i(\W^n)^2 \le 2 \EE^{\QQ_{\ttheta}}\sumin (m_i(\W^n) - s_i)^2 + 2 \sumin s_i^2 \lesssim n\an + C_1(\ttheta, \X^n).$$

    \item For $v_i := \tanh(c_i)$, we have
    $$\Big|\sumin d_i(u_i - v_i)\Big|^2 \le \lVert \mathbf{d} \rVert^2 \sumin (u_i - v_i)^2 \le \lVert \mathbf{d} \rVert^2 \sumin s_i^2 \lesssim \lVert \mathbf{d} \rVert^2 C_1(\ttheta, \X^n).$$
    The second inequality holds since
    $$|u_i - v_i| = |\tanh(\beta s_i + c_i) - \tanh(c_i)| \le \beta |s_i| \le |s_i|,$$
    and the third inequality uses the fourth bullet point above.
    Hence, using the second bullet point, we have
    \begin{align*}
        \EE \Big[\sumin d_i(W_i- v_i )\Big]^2 &\le 2\EE \Big[\sumin d_i(W_i- u_i )\Big]^2 + 2\Big|\sumin d_i(u_i - v_i)\Big|^2 \\
        &\lesssim \lVert \mathbf{d} \rVert^2 (1 + n \an^2 + C_1(\ttheta, \X^n)).
    \end{align*}
\end{enumerate}
\end{proof}

Finally, we prove \cref{lem:Ising moment concentration} using standard arguments for Ising models.

\begin{proof}[Proof of Lemma \ref{lem:Ising moment concentration}]
\begin{enumerate}[(a)]
    \item Theorem 2.1 in \cite{gheissari2018concentration} shows that for $\beta < 1$, 
    $$\Var(\Z^\top \A_n \Z) \lesssim \|\A_n\|_F^2 \le n \an.$$
    Also, using the fact that $\EE(Z_i \mid Z_{(-i)}) = \tanh(\beta m_i(\Z))$ and $|\tanh(\beta m_i(\Z))| \le \beta |m_i(\Z)|$, we have
    \begin{align*}
        \left|\EE \left[\Z^\top \A_n \Z\right]\right| &= \left|\EE \left[ \sumin Z_i m_i(\Z) \right]\right| \\
        &= \left|\EE \left[\sumin \tanh(\beta m_i(\Z)) m_i(\Z) \right]\right| \\
        &\le \beta \EE \sumin m_i^2 (\Z) = O(n \an).%
    \end{align*}
    The last bound uses part (b) of this Lemma. The proof is complete since
    $$\EE \left[ \Z^\top \A_n \Z\right]^2 = \Var(\Z^\top \A_n \Z) + \left(\EE \left[\Z^\top \A_n \Z \right]\right)^2 \lesssim n\an + n^2 \alpha_n^2.$$
    
    \item This directly follows from existing results in the literature, such as Lemma 2.1(a) in \cite{deb2023fluctuations}, or Lemma 3.2(a) in \cite{lee2024rfim}.
\end{enumerate}
\end{proof}

\subsubsection{Proof of Lemmas \ref{lem: u n consistency}, \ref{lem:W bar moment bound low tmp}, and \ref{lem:W bar moment bound high tmp}}\label{subsec:rfim concentration}
We prove the results related to the random field Curie-Weiss model $\QQCW$. Recall notations $M_n, M_\infty$ from \cref{subsec:low temperature cw}. For notational simplicity, define the following objective function in \eqref{eq:u_n defi} and its limit:
\begin{align*}
    f_n(u) &:= M_{n}(u,\ttheta_0) = \frac{\beta u^2}{2} - \frac{1}{n} \sumin \log \cosh(\beta u + \ttheta_0^\top \X_i), \\
    f_{\infty}(u) &:= M_{\infty}(u,\ttheta_0) = \frac{\beta u^2}{2} - \EE_{\ttheta_0} \log \cosh(\beta u + \ttheta_0^\top \X).
\end{align*}

First, we prove Lemma \ref{lem: u n consistency}, by modifying standard arguments for M-estimators.

\begin{proof}[Proof of Lemma \ref{lem: u n consistency}]

We prove that $V_n : (\bar{\X} \in \Theta_1) \xp m$, which implies the statement for $U_n$. First, note that Lemma \ref{lem:uniqueness of minimizer} gives that $f_\infty(u) = M_{\infty}(u, \ttheta_0)$ is uniquely minimized at $u = m$.
Fix any $\epsilon > 0$ and let $\eta := \inf_{|u - m|>\epsilon} f_{\infty}(u) - f_{\infty}(m) > 0$. Then, we can bound
\begin{align}
\PP\Big(|V_n - m| &> \epsilon : (\bar{\X} \in \Theta_1)\Big) = \PP\Big(\min_{|u - m| > \epsilon} \ft_n(u) < \min_{|u-m| \le \epsilon} \ft_n(u) : (\bar{\X} \in \Theta_1)\Big) \notag \\
&\le \PP\Big(\min_{|u-m| > \epsilon} \ft_n(u) < \ft_n(m) : (\bar{\X} \in \Theta_1)\Big) \notag \\
&\le \PP \Big(\min_{|u-m| > \epsilon} \ft_n(u) < \ft_n(m), ~\sup_{|u| \le 1} |\ft_n(u) - f_{\infty}(u)| < \frac{\eta}{2} : (\bar{\X} \in \Theta_1)\Big) \label{eq:lem 6.5 proof} \\
&\quad + \PP \Big(\sup_{|u| \le 1} |\ft_n(u) - f_{\infty}(u)| \ge \frac{\eta}{2} : (\bar{\X} \in \Theta_1)\Big). \notag
\end{align}

To control the first term in \eqref{eq:lem 6.5 proof}, suppose that $\sup_{|u| \le 1} |\ft_n(u) - f_{\infty}(u)| < \frac{\eta}{2}$ and take any $u$ with $|u| > \epsilon$. Then, $\ft_n(u) > f_{\infty}(u) - \frac{\eta}{2} \ge f_{\infty}(m) + \frac{\eta}{2} > \ft_n(m)$, so $\min_{|u| > \epsilon} \ft_n(u) \ge \ft_n(m)$. Hence, the first term is exactly 0. 
For the second term in \eqref{eq:lem 6.5 proof}, we have
\begin{align*}
    & \PP \Big(\sup_{|u| \le 1} |\ft_n(u) - f_{\infty}(u)| \ge \frac{\eta}{2} : (\bar{\X} \in \Theta_1)\Big) \\
    \le & \PP \Big(\sup_{|u| \le 1} |\ft_n(u) - f_{n}(u)| \ge \frac{\eta}{4} : (\bar{\X} \in \Theta_1)\Big) + \PP \Big(\sup_{|u| \le 1} |f_n(u) - f_{\infty}(u)| \ge \frac{\eta}{4} : (\bar{\X} \in \Theta_1)\Big) \\
    \le & \PP \Big(\frac{\|\xxi_n - \ttheta_0\|}{n} \sumin \|\X_i\| > \frac{\eta}{4} : (\bar{\X} \in \Theta_1) \Big)+ o_p(1).
\end{align*}
In the last line, we have used triangle inequality and the bound $$|\log \cosh(\beta u + \xxi_n^\top \X_i) - \log \cosh(\beta u + \ttheta_0^\top \X_i)| \le |\xxi_n^\top \X_i - \ttheta_0^\top \X_i| \le \|\xxi_n - \ttheta_0\| \|\|\X_i\|$$ (first term), and Lemma \ref{lem:ULLN} (second term).
Finally, we use the assumption $\xxi_n(\X^n) : (\bar{\X} \in \Theta_1) \xp \ttheta_0$ to see
\begin{align*}
    &\PP \left(\frac{\|\xxi_n - \ttheta_0\|}{n} \sumin \|\X_i\| > \frac{\eta}{4} : (\bar{\X} \in \Theta_1) \right) \\
    \le& \PP \left(\frac{1}{n} \sumin \|\X_i\| > C : (\bar{\X} \in \Theta_1) \right) + \PP \left(\|\xxi_n - \ttheta_0\|  > \frac{\eta}{4C} : (\bar{\X} \in \Theta_1) \right) \to 0.
\end{align*}
Here, $C$ can be any large constant, e.g. we can take $C = \|\ttheta_0\| + 2\sqrt{d}$ so that $\frac{1}{n}\sumin \|\X_i\| \le C$ with high probability (see \eqref{eq:estimator norm bound}).
Consequently, the RHS in \eqref{eq:lem 6.5 proof} is $o_p(1)$, and the proof is complete. Note that this computation shows that $f_\infty$ is the pointwise limit of $\tilde{f}_n$, which will be used in the following proof.

\end{proof}

\begin{remark}
    With some additional effort, we can additionally show the following tighter concentrations: $\sqrt{n}(U_n - m) : (\bar{\X} \in \Theta_1) \xd N \left(0, \frac{\sigma_{1,1}}{1 - \beta \alpha_0} \right)$
    and $|V_n - U_n|: (\bar{\X} \in \Theta_1) = \Theta_p(\|\xxi_n - \ttheta_0\| ).$
\end{remark}

Now, we prove concentration for the random field Curie-Weiss model in low temperatures. The main idea is to utilize the auxiliary random variable $Y_n$ is crucial, which guarantees conditional independence of the $W_i$s.
We prove the $L^p$ bounds in parts (b) and (c) using the Laplace approximation, and prove the stronger $L^1$ bound in part (d) using the method of exchangeable pairs.

\begin{proof}[Proof of Lemma \ref{lem:W bar moment bound low tmp}]
\begin{enumerate}[(a)]
\item Recall $\W^n \mid \X^n \sim \QQ^{\text{CW}}_{\xxi_n,\beta}$, \eqref{eq:rfim def} and $Y_n \mid \X^n, \W^n \sim N(\bar{W}, 1/{n\beta})$. By the Bayes rule, we get
$$\PP(\W^n \mid \X^n, Y_n) \propto \PP(\W^n \mid \X^n) \PP(Y_n \mid \X^n, \W^n) \propto \exp \Big[\sumin W_i(\beta Y_n + \xxi^\top \X_i) \Big].$$
The conditional independence of $W_i \mid Y_n, \X^n$ is immediate from the above formula. The marginal distribution $\PP(Y_n \mid \X^n)$ also directly follows by marginalizing the below expression over $\W^n \in \{\pm 1\}^n$:
$$\PP(Y_n, \W^n \mid \X^n) \propto \exp \Big[- \frac{n\beta Y_n^2}{2} +\sumin W_i(\beta Y_n + \xxi_n^\top \X_i) \Big].$$

\item For notational simplicity, we prove the result for any deterministic $\X^n$ that satisfies $\bar{\X}\in \Theta_1$ and
\begin{equation}\label{eq:deterministic X conditions}
    V_n \to m, ~~ \frac{1}{n} \sumin \sech^2(\beta V_n + \xxi_n^\top \X_i) \to \alpha_0, ~~ \lim_n \sup_{|y| \le 2} |\ft_n''(y) - f_{\infty}''(y)| < \frac{f_{\infty}''(m)}{4}.
\end{equation}
We claim that \eqref{eq:deterministic X conditions} holds with high probability for $\X^n \sim P_{\ttheta_0,\beta}, \bar{\X} \in \Theta_1$. To elaborate, the first limit is immediate by \ref{lem: u n consistency}. The second limit follows from writing 
$$\frac{1}{n} \sumin \sech^2(\beta V_n + \xxi_n^\top \X_i) = \frac{1}{n} \sumin \sech^2(\beta m + \ttheta_0^\top \X_i) + O\Big(|V_n-m| + \frac{\|\xxi_n - \ttheta_0\|}{n} \sumin\|\X_i\| \Big)$$
and using the LLN (see \cref{lem:ULLN}) to argue that the RHS converges to $\alpha_0$. The third limit follows from identical computations as the bounds for the second term in \eqref{eq:lem 6.5 proof}.

    \vspace{2mm}
    We bound $\EE(Y_n - V_n)^2$ using the Laplace approximation of $Y_n \mid \X^n$. Since
    $$n\EE(Y_n - V_n)^2 = \frac{\sqrt{n} \int_{-\infty}^{\infty} n(y - V_n)^2 e^{-n(\ft_n(y) - \ft_n(V_n))} dy}{\sqrt{n} \int_{-\infty}^{\infty} e^{-n(\ft_n(y) - \ft_n(V_n))} dy} =: \frac{I_1}{I_2},$$
    it suffices to show $I_1 \to C_1$ and $I_2 \to C_2$ for positive constants $C_1, C_2$ with $\frac{C_1}{C_2} = \frac{1}{\beta(1-\beta \alpha_0)}$.

    By a 3rd order Taylor expansion and using $\ft_n'(V_n) = 0$, we can write
    \begin{align}\label{eq:limit of f'' Taylor}
        |\ft_n(y) - \ft_n(V_n) - \frac{(y-V_n)^2}{2} \ft_n''(V_n)| &= \left| \frac{(y-V_n)^3}{6} \ft_n'''(\upsilon_n(y)) \right| \le C_3 {|y-V_n|^3},
    \end{align}
    where $C_3 >0$ is some constant, and $\upsilon_n(y) \in (y, V_n)$.
    Also, note that assumption \eqref{eq:deterministic X conditions} implies 
    \begin{align}\label{eq:limit of f''(V)}
        \ft_n''(V_n) = \beta - \frac{\beta^2}{n} \sumin \sech^2(\beta V_n + \xxi_n^\top \X_i) \to f_{\infty}''(m) = \beta (1- \beta \alpha_0) := \frac{1}{\sigma^2}.
    \end{align} Here, $\sigma^2 > 0$ due to Lemma \ref{lem:variance simplification}(a).
    
    To bound $I_1$, we separate the integral region into 3 parts:
    $$(-\infty, \infty) = \underbrace{[-\frac{K}{\sqrt{n}},\frac{K}{\sqrt{n}}]}_{J_1} \cup \underbrace{[-2, -\frac{K}{\sqrt{n}}) \cup (\frac{K}{\sqrt{n}}, 2]}_{J_2} \cup \underbrace{(-\infty, 2) \cup (2, \infty)}_{J_3}.$$
    For $y \in J_1$, we use \eqref{eq:limit of f'' Taylor} to upper bound the exponent as
    \begin{align*}
        &\sqrt{n} \int_{J_1} n(y - V_n)^2 e^{-n(\ft_n(y) - \ft_n(V_n))} dy \\
        \le & \sqrt{n} e^{n C_3 (K/\sqrt{n})^3 } \int_{J_1} n(y - V_n)^2 e^{-\frac{n(y - V_n)^2}{2} \ft_n''(V_n)} dy \\
        =& e^{n C_3 (K/\sqrt{n})^3 } \int_{-K}^{K} z^2 e^{- \frac{z^2 \ft_n''(V_n)}{2}} dz \to \int_{-K}^{K} z^2 e^{- \frac{z^2 }{2 \sigma^2}} dz
    \end{align*}
    as $n \to \infty$.
    The third line follows by substituting $z = \sqrt{n}(y-V_n)$, and the last limit used \eqref{eq:limit of f''(V)}. Since bounding
    $\ft_n(y) - \ft_n(V_n) \le \frac{(y - V_n)^2}{2} - C_3 |y - V_n|^3$ gives the exact same lower bound, we have
    \begin{equation}\label{eq:integral J_1}
        \sqrt{n} \int_{J_1} n(y - V_n)^2 e^{-n(\ft_n(y) - \ft_n(V_n))} dy \to \int_{-K}^{K} z^2 e^{- \frac{z^2 }{2 \sigma^2}} dz.
    \end{equation}

    Now, we bound the integral for $y \in J_2$. Recall from \eqref{eq:deterministic X conditions} that for a large enough $n$, $\sup_{|y| \le 2} |\ft_n''(y) - f_{\infty}''(y)| < \frac{f_{\infty}''(m)}{4}$. Let $\eta > 0$ be a small constant such that $\sup_{|y - m| \le \eta} |f_{\infty}''(y) - f_{\infty}''(m)| \le \frac{f_{\infty}''(m)}{4}$. Then, we have
    \begin{align*}
        \sup_{|y - m| \le \eta} |\ft_n''(y) - f_{\infty}''(m)| \le \sup_{|y| \le 2} |\ft_n''(y) - f_{\infty}''(y)| + \sup_{|y - m| \le \eta} |f_{\infty}''(y) - f_{\infty}''(m)| \le \frac{f_{\infty}''(m)}{2}.
    \end{align*}
    Then, for $|y - m| \le \eta$, $\ft_n''(y) > \frac{f_{\infty}''(m)}{2}$ and a 2nd order Taylor expansion analogous to \eqref{eq:limit of f'' Taylor} gives
    $$\ft_n(y) - \ft_n(V_n) = \frac{(y-V_n)^2}{2} \ft''(\upsilon_n(y)) \ge \frac{(y-V_n)^2}{4} {f_{\infty}''(m)} = \frac{(y-V_n)^2 }{4 \sigma^2} $$
    with high probability.
    For $y \in J_2$ such that $|y - m| > \eta$, the uniqueness of the minimizer of $f_{\infty}$ (see \cref{lem:uniqueness of minimizer}) and the ULLN (see \cref{lem:ULLN}) guarantees existence of a positive $\psi$ such that $f_{n}(y) - f_{n}(m) > \psi$ for a large enough $n$.
    Hence, 
    \begin{align}
        &\sqrt{n} \int_{J_2} n(y - V_n)^2 e^{-n(\ft_n(y) - \ft_n(V_n))} dy \notag \\
        \le & \sqrt{n} \int_{{J_2 \cap \{|y-m| \le \eta\} }} n(y - V_n)^2 e^{-\frac{n (y - V_n)^2}{4\sigma^2}} dy + \sqrt{n} \int_{{J_2 \cap \{|y-m| > \eta\} }} n(y - V_n)^2 e^{- n \psi} dy \label{eq:integral J_2} \\ 
        \to & \int_{[-\infty, -K] \cup [K, \infty]} z^2 e^{-\frac{z^2}{4\sigma^2}} dz. \notag
    \end{align}
    
    For $y \in J_3$, note that $\ft_n(y)$ is increasing for $y \ge 1$ and decreasing for $y \le -1$. Then, as $V_n$ is the minimizer of $\ft_n$ in $[-1,1]$, $\ft_n(V_n) \le \ft_n(1) < \ft_n(1.5)$. For $y > 1.5$, $\ft_n'(y) = \beta (y - \frac{1}{n} \sumin \tanh(\beta y + \xxi_n^\top \X_i)) > \frac{\beta}{2}$, and we have
    $$\ft_n(y) - \ft_n(V_n) \ge \ft_n(y) - \ft_n(1.5)  \ge \frac{\beta}{2} (y-1.5).$$
    Hence,
    \begin{align}
        &\sqrt{n} \int_{2}^{\infty} n(y - V_n)^2 e^{-n(\ft_n(y) - \ft_n(V_n))} dy \notag \\
        \le & \sqrt{n} \int_{2}^{\infty} n(y - V_n)^2 e^{-\frac{n\beta}{2} (y-1.5)} dy \label{eq:integral J_3} \\
        \lesssim& n \sqrt{n} \int_{0.5}^{\infty} (z^2 +1) e^{-\frac{n\beta z}{2}} dz \to 0. \notag
    \end{align}
    The last line substitutes $z = y-1.5$, and the limit holds as the integral is exponentially small in $n$. The integral in $(-\infty, -2)$ can be bounded similarly.

    Now, we add up all bounds in \eqref{eq:integral J_1}, \eqref{eq:integral J_2}, and \eqref{eq:integral J_3} and take $K \to \infty$ to get
    $I_1 \to \int_{-\infty}^{\infty} z^2 e^{-\frac{z^2}{2\sigma^2}} = \sqrt{2 \pi}\sigma^3.$ Note that we are using the trivial lower bound of zero for \eqref{eq:integral J_2} and \eqref{eq:integral J_3}.

    \vspace{2mm}
    To compute the limit of $I_2$, we similarly divide the integral region into 3 parts. By removing the $n(y - V_n)^2$ term in the integrated and using the same bounds, we get
    $$I_2 \to \int_{-\infty}^{\infty} e^{-\frac{z^2}{2 \sigma^2}} dz = \sqrt{2 \pi \sigma^2}.$$
    Hence, 
    $$n \EE(Y_n - V_n)^2= \frac{I_1}{I_2} \to \sigma^2 = \frac{1}{\beta(1 - \beta \alpha_0)}.$$

    The bound $n^{q/2} \EE_{\xxi_n}  |Y_n- V_n|^q \lesssim 1$ can also be similarly derived by representing the expectation as 
    $$\frac{\sqrt{n} \int_{-\infty}^{\infty} |\sqrt{n}(y - V_n)|^q e^{-n(\ft_n(y) - \ft_n(V_n))} dy}{\sqrt{n} \int_{-\infty}^{\infty} e^{-n(\ft_n(y) - \ft_n(V_n))} dy}$$
    and upper bounding the numerator with Normal moments.

    \item  Using $Y_n$, we can bound
    $$\EE^{\QQ_{\xxi_n}} (\bar{W}- V_n)^2 \le  2\EE^{\QQ_{\xxi_n}} (\bar{W} - Y_n)^2 + 2\EE_{\xxi_n}^{Y}(Y_n - V_n)^2 .$$
    The second term is $O\Big(\frac{1}{n}\Big)$ by part (a). 
    The first term is also $O\Big(\frac{1}{n}\Big)$ by using the Gaussianity of $Y_n \mid \W^n, \X^n$ to write
    \begin{align*}
        \EE^{\QQ_{\xxi_n}} ((\bar{W} - Y_n)^2) = \EE^{\QQ_{\xxi_n}} ( \EE ((\bar{W} - Y_n)^2 \mid \W^n, \X^n)) = \frac{1}{n\beta}.
    \end{align*}

    \item %
    We prove this stronger bound using the method of exchangeable pairs. 
    Similar to part (b), it suffices to prove the result for any deterministic $\X^n$ that satisfies $\bar{\X} \in \Theta_1$ and \eqref{eq:deterministic X conditions}.
    For notational simplicity, write $c_i := \xxi_n^\top \X_i$ for this segment of the proof. Let $T_n := \sqrt{n} (\bar{W} - V_n)$ and $\bar{W}_{(-i)} := \frac{1}{n} \sum_{j \neq i} W_j$. Let $(\W^n, \W'^{n})$ be the exchangeable pair that results by moving one step forward in the Glauber dynamics (i.e. pick an index $I \in [n]$ uniformly at random, and for $I=i$, replace $W_i$ by a random variable $W_i'$ generated from the complete conditional of $W_i \mid (W_j, j \neq i)$) and set $T_n' := \sqrt{n}(\bar{W'}-V_n)$.

    By a Taylor expansion, we can write
    $$\EE[W_i \mid (W_j, j \neq i)] = \tanh(\beta \bar{W}_{(-i)} + c_i) = \tanh(\beta \bar{W} + c_i) - \frac{\beta W_i}{n} \sech^2(\beta \kappa_i + c_i)$$
    for some $\kappa_i$. Define an error term
    $$E_n := \frac{\beta}{n^2\sqrt{n}} \sumin W_i \sech^2(\beta \kappa_i + c_i),$$ which is bounded deterministically by $\frac{\beta}{n\sqrt{n}}$.

    Now, using the properties of exchangable pairs alongside the above Taylor expansion, we can write
    \begin{equation}\label{eq:stein expansion}
    \begin{aligned}
        \EE(T_n - T_n' &\mid \W^n, \X^n) = \frac{1}{n\sqrt{n}}\sumin W_i - \frac{1}{n\sqrt{n}} \sumin \tanh(\beta \bar{W}_{(-i)} + c_i) \\
        =& \frac{1}{n\sqrt{n}}\sumin W_i - \frac{1}{n\sqrt{n}} \sumin \tanh(\beta \bar{W} + c_i) + E_n \\
        =& \frac{1}{n\sqrt{n}}\sumin (W_i-V_n) - \frac{1}{n\sqrt{n}} \Big(\beta (\bar{W}-V_n) \sumin \sech^2(\beta V_n + c_i) \\
        &+ \frac{\beta^2}{2}(\bar{W}-V_n)^2 \sumin (\sech^2)'(\beta \eta_n + c_i) \Big) + E_n.
    \end{aligned}
    \end{equation}
    For the last equality in \eqref{eq:stein expansion}, we are doing another Taylor expansion of $\sumin \tanh(\beta \bar{W} + c_i)$ around $\bar{W} \approx V_n$ and using the first order condition of $V_n$ (recall $V_n$ was defined as the minimizer of $\ft_n(V_n)$):
    \begin{align}\label{eq:V_n FOC}
        V_n = \frac{1}{n} \sumin \tanh(\beta V_n + c_i).
    \end{align}

    By taking a further expectation on \eqref{eq:stein expansion} with respect to $\W^n \mid \X^n \sim {\QQ}_{\xxi_n}$, we have
    \begin{equation}\label{eq:stein expectation}
    \begin{aligned}
        0 &= n\sqrt{n} \EE(T_n - T_n' : \X^n) \\
        &= \EE^{\QQ_{\xxi_n}}(\bar{W}-V_n) \left(n -\beta \sumin \sech^2(\beta V_n + c_i) \right) \\
        &\quad - \frac{\beta^2}{2} \EE^{\QQ_{\xxi_n}} \left((\bar{W}-V_n)^2 \sumin (\sech^2)'(\beta \eta_n + c_i) \right) + n\sqrt{n} \EE^{\QQ_{\xxi_n}} E_n.
    \end{aligned}
    \end{equation}
    By rearranging terms, we can write
    $$n \EE^{\QQ_{\xxi_n}}[\bar{W}-V_n] = \frac{\frac{\beta^2}{2} \EE^{\QQ_{\xxi_n}} \left((\bar{W}-V_n)^2 \sumin (\sech^2)'(\beta \eta_n + c_i) \right) - n\sqrt{n} \EE^{\QQ_{\xxi_n}} E_n}{1 - \frac{\beta}{n} \sumin \sech^2(\beta V_n + c_i)}.$$
    Recalling the assumption \eqref{eq:deterministic X conditions} on $\X^n$ and $1-\beta \alpha_0>0$ (see part (a) of \cref{lem:variance simplification}), the denominator is bounded away from 0.
    For the numerator, the $L^2$ concentration bound in part (b) gives
    \begin{align*}
        \left|\EE^{\QQ_{\xxi_n}} \left((\bar{W}-V_n)^2 \sumin (\sech^2)'(\beta \xi_n + c_i) \right) \right| \lesssim &  n  \EE^{\QQ_{\xxi_n}} \left((\bar{W}-V_n)^2 \right) \lesssim 1.
    \end{align*}
    By the deterministic bound on $E_n$, we also have
    $n\sqrt{n} |\EE^{\QQ_{\xxi_n}} E_n| \lesssim 1.$
    The bound for $\EE^{\QQ_{\xxi_n}}[\bar{W}-V_n]$ follows by combining the result for the denominator and the numerator.
    The analogous statement for $Y_n - V_n$ directly follows since $Y_n - V_n = (Y_n - \bar{W}) + (\bar{W}-V_n)$ and $$\EE [Y_n - \bar{W} : \W^n, \X^n] = 0.$$
\end{enumerate}

\end{proof}

Finally, we prove Lemma \ref{lem:W bar moment bound high tmp}, which gives the second moment bound under $\beta=1$. The overall argument is very similar to the low temperature analog in Lemma \ref{lem:W bar moment bound low tmp}(b) with a distinction that now (under $\beta =1$) we have $m=m(\beta)=0$.

\begin{proof}[Proof of Lemma \ref{lem:W bar moment bound high tmp}]

Fix $\ttheta \in \Theta$ and define the auxiliary Gaussian variable $Y_n$ as in \cref{lem:W bar moment bound low tmp}.
We divide the proof into two parts. The first step shows that the mode of the likelihood for $Y_n$ concentrates around $0$, similar to \cref{lem: u n consistency}. The second step utilizes the Laplace approximation to derive the second moment bound.
\vspace{2mm}

    \textbf{Step 1.} Similar to the setup of \cref{lem: u n consistency}, define $$\ft_n(v) := \frac{\beta v^2}{2} - \frac{1}{n} \sumin \log \cosh(\beta v + \ttheta^\top \X_i), \quad V_n := \argmin_{v \in [-1,1]} \tilde{f}_n(v).$$ Then, by part (a) in \cref{lem:W bar moment bound low tmp}, the density of $Y_n \mid \X^n$ satisfies
    $$\PP(Y_n \mid \X^n) \propto e^{-n \ft_n(Y_n)}.$$

    We first claim that $V_n = O_p(\frac{1}{\sqrt{n}})$. Similar to Lemma \ref{lem: u n consistency}, consistency follows by noting that $\ft_{\infty}''$ is strictly convex, and uniquely minimized at $v = 0$. Indeed, for $\beta = 1$ and any $v \in [-1,1]$, $\ft_{\infty}''(v) = 1 - \EE \sech^2(\beta v + \ttheta^\top \X) > 0$. Then, by a Taylor expansion with $V_n \approx 0$ on the fixed-point equation \eqref{eq:V_n FOC}, we can write
    $$V_n = \frac{\frac{1}{n} \sumin \tanh(\ttheta^\top \X_i)}{1 - \frac{1}{n} \sumin \sech^2(\eta_n + \ttheta^\top \X_i)},$$
    with $\eta_n \in (0, V_n) \xp 0$. The denominator converges to a positive constant and the numerator is $O_p(\frac{1}{\sqrt{n}})$. Hence, $V_n = O_p(\frac{1}{\sqrt{n}})$.

    \vspace{2mm}
    \textbf{Step 2.} %
    Since $\ft'_n(V_n) = 0$ and $\ft''_n (V_n) \to f_{\infty}''(0) = 1 - \EE_{\X \sim N_d(\ttheta_0, \I)} \sech^2(\ttheta^\top \X) > 0$, applying the Laplace method (see part (b) in \cref{lem:W bar moment bound low tmp}) gives
    $$\EE (Y_n - V_n)^2 \lesssim_P \frac{1}{n}.$$
    Also, the definition of $Y_n$ gives $\EE(Y_n - \bar{W})^2 = \frac{1}{n\beta}$.
    Then, by combining all bounds,
    $$\EE^{\QQCW} \bar{W}^2 \lesssim \EE (\bar{W} - Y_n)^2 + \EE (Y_n - V_n)^2 + V_n^2 = O_p\left(\frac{1}{n} \right).$$
\end{proof}

\subsubsection{Proof of Lemma \ref{lem:variance simplification}}\label{subsec:proof of variance matching}
{
The following proof crucially utilizes Stein's lemma to simplify the components of $\Sigma$ and $\Gamma$ in terms of the quantities defined in \cref{defi:sigma}. The individual statements follow by plugging-in these expressions. Recall the following multivariate Stein's lemma: for $\mathbf{Y} \sim N(\boldsymbol{\mu}, \I)$ and a differentiable function $g$ where both expectations below exist, we have
\begin{align}\label{eq:stein}
    \EE g(\mathbf{Y}) (\mathbf{Y}-\boldsymbol{\mu}) = \EE \nabla g(\mathbf{Y}).
\end{align}
}
\begin{proof}[Proof of Lemma \ref{lem:variance simplification}]
Fix $\beta > 1$.
Recall that we have defined $$\alpha_k := \EE_{\ttheta_0} \X^k \sech^2(\beta m + \ttheta_0^{\top} \X), \quad k = 0,1,2$$ 
(where $\X^2$ means $\X \X^{\top}$) and $\mu_{\pm 1}, \boldsymbol{\nu}_{\pm 1}$ in \cref{defi:sigma}, and let $p = \frac{1+m}{2}$. Also, by the identities in \eqref{eq:m,theta identies} and using the definition of $\EE_{\ttheta_0}$, we have
\begin{equation}\label{eq:m identities}
\begin{aligned}
    m &= \EE_{\ttheta_0} \tanh(\beta m + \ttheta_0^{\top} \X) = p \mu_1 + (1-p) \mu_{-1}, \\
    \ttheta_0 &= \EE_{\ttheta_0} \X \tanh(\beta m + \ttheta_0^{\top} \X) = p \boldsymbol{\nu}_1 + (1-p) \boldsymbol{\nu}_{-1} , \\
        m (\I + \ttheta_0 \ttheta_0^{\top} ) &= \EE_{\ttheta_0} \X \X^{\top} \tanh(\beta m + \ttheta_0^{\top} \X).
\end{aligned}
\end{equation}

We first claim that we can write
\begin{equation}\label{eq:alpha_0}
    \alpha_0 = (1-m^2) (1 - \frac{\mu_1 - \mu_{-1}}{2}).
\end{equation}
This follows from using of Stein's lemma (see \eqref{eq:stein}) to write
\begin{align*}
    \ttheta_0 \alpha_0 &= p \ttheta_0 \EE [\sech^2(\beta m + \ttheta_0^{\top} \X) \mid Z=1] + (1-p) \ttheta_0 \EE [\sech^2(\beta m + \ttheta_0^{\top} \X) \mid Z=-1] \\
    &= p \EE [(\X - \ttheta_0) \tanh(\beta m + \ttheta_0^{\top} \X) \mid Z=1] + (1-p) \EE [(\X + \ttheta_0) \tanh(\beta m + \ttheta_0^{\top} \X) \mid Z=-1] \\
    &= p \boldsymbol{\nu}_1 - \ttheta_0 \mu_1 + (1-p) \boldsymbol{\nu}_{-1} + \ttheta_0 \mu_{-1} \\
    &= \ttheta_0 (1 - p \mu_1 + (1-p) \mu_{-1}) \\
    &= \ttheta_0 (1-m^2) (1 - \frac{\mu_1 - \mu_{-1}}{2}).
\end{align*}
The last equality follows by writing
\begin{align*}
    1 - p \mu_1 + (1-p) \mu_{-1} = 1 - \frac{\mu_1-\mu_{-1}}{2} - \frac{m(\mu_1 + \mu_{-1})}{2}
\end{align*}
and noting that the first identity in \eqref{eq:m identities} implies $\frac{\mu_1 + \mu_{-1}}{2} = m(1-\frac{\mu_1-\mu_{-1}}{2})$.

Now, we claim the following expression for $\alpha_1$:
\begin{align}\label{eq:alpha 1}
    \alpha_1 = -\frac{1-m^2}{2} (\boldsymbol{\nu}_1 - \boldsymbol{\nu}_{-1}).
\end{align}
Again by Stein's lemma in \eqref{eq:stein}, we have
$$\EE_{\X \sim N(\boldsymbol{\mu},\I)} [(\X-\boldsymbol{\mu}) \log \cosh(\beta m + \ttheta_0^\top \X)] = \ttheta_0 \EE_{\X \sim N(\boldsymbol{\mu},\I)} \tanh(\beta m + \ttheta_0^\top \X)$$
for any $\boldsymbol{\mu} \in \mathbb{R}^d$.
Taking the derivative with respect to $\ttheta_0$ gives
\begin{align*}
    &\EE_{\X \sim N(\boldsymbol{\mu},\I)} [(\X \X^\top-\boldsymbol{\mu} \X^\top) \tanh (\beta m + \ttheta_0^\top \X)] \\
    =& \I \EE_{\X \sim N(\boldsymbol{\mu},\I)} \tanh(\beta m + \ttheta_0^\top \X) + \ttheta_0 \EE_{\X \sim N(\boldsymbol{\mu},\I)} \X^{\top} \sech^2(\beta m + \ttheta_0^\top \X).
\end{align*}
By setting $\boldsymbol{\mu} = \pm \ttheta_0$ and rearranging terms, we get
\begin{align}\label{eq:x sech^2}
    \ttheta_0 \EE[ \X^{\top} \sech^2(\beta m + \ttheta_0^\top \X)\mid Z = \pm 1] = \EE[ \X \X^{\top} \tanh(\beta m + \ttheta_0^{\top} \X) \mid Z = \pm 1] - \mu_{\pm 1} \I \mp \ttheta_0 \boldsymbol{\nu}_{\pm 1}^{\top}.
\end{align}
Using \eqref{eq:x sech^2} (identity for $m(\I+\ttheta_0 \ttheta_0^\top)$ in the second line, and identity for $\ttheta_0$ in the fourth line) alongside \eqref{eq:m identities}, we can simplify
\begin{align*}
    \ttheta_0 \alpha_1^{\top} &= p \ttheta_0\EE[\X^{\top} \sech^2(\beta m + \ttheta_0^\top \X) | Z = 1 ] + (1-p) \ttheta_0 \EE[\X^{\top} \sech^2(\beta m + \ttheta_0^\top \X) | Z = -1 ] \\
    &= {\EE [\X \X^{\top} \tanh(\beta m + \ttheta_0^\top \X)] - m \I} - \ttheta_0 (p \boldsymbol{\nu}_1 - (1-p) \boldsymbol{\nu}_{-1})^{\top} \\
    &= m \ttheta_0 \ttheta_0^\top - \ttheta_0 (p \boldsymbol{\nu}_1 - (1-p) \boldsymbol{\nu}_{-1})^\top \\
    &= \ttheta_0 \left( m (p \boldsymbol{\nu}_1 + (1-p) \boldsymbol{\nu}_{-1}) - (p \boldsymbol{\nu}_1 - (1-p) \boldsymbol{\nu}_{-1}) \right)^{\top} \\
    &= - \ttheta_0 \frac{(1-m^2) (\boldsymbol{\nu}_1 - \boldsymbol{\nu}_{-1})^{\top}}{2},
\end{align*}
which gives the desired expression for $\alpha_1$.
We are now ready to prove the individual statements.

\begin{enumerate}[(a)]
\item Noting that $$\mu_1 = \EE \tanh(\beta m + \ttheta_0^{\top} N_d (0,\I) + \ttheta_0^\top \ttheta_0) > \EE \tanh(\beta m + \ttheta_0^{\top} N_d (0,\I)  - \ttheta_0^\top \ttheta_0) =  \mu_{-1},$$  
\eqref{eq:alpha_0} gives $$1 - \beta \alpha_0 = 1- \beta(1-m^2) \left(1 - \frac{\mu_1 - \mu_{-1}}{2} \right) > 1 - \beta(1-m^2) > 0.$$
The last inequality holds since $C(\beta) = \frac{1 - m^2}{1 - \beta (1-m^2)} < \infty$.

\vspace{5mm}
\item We first show the equality in \eqref{eq:variance information eq}. By rearranging terms, it suffices to prove
$$\mathbf{M}:= \ssigma_{2,2}-\ggamma_{2,2} - \ssigma_{1,2} \ddelta^{\top} - \ddelta \ssigma_{1,2}^{\top} + \frac{\ggamma_{1,2} \ggamma_{1,2}^{\top}}{\gamma_{1,1}} + \ddelta \ddelta^{\top} \sigma_{1,1}  = 0.$$
For this goal, we rewrite all terms above using $\mu_{\pm 1}, \boldsymbol{\nu}_{\pm 1}, \alpha_k$'s. We first set $\tilde{C}(\beta) := C(\beta)/4$ and simplify $\gamma, \sigma$'s (recall the definition of $\Gamma$ from \cref{defi:info} and $\Sigma$ from part (c) of \cref{defi:sigma}):
\begin{align*}
    \gamma_{1,1} &= \beta (1 - \beta \alpha_0), \\
    \ggamma_{1,2} &= - \beta \alpha_1, \\
    \ggamma_{2,2} &= \I - \alpha_2, \\
    \sigma_{1,1} &= \beta^2 \left(1 - \alpha_0 - (p \mu_1^2 + (1-p) \mu_{-1}^2) + \tilde{C}(\beta) (\mu_1 - \mu_{-1})^2 \right), \\
    \ssigma_{1,2} &= \beta \left(\ttheta_0 m - \alpha_1 - (p \mu_1 \boldsymbol{\nu}_1 + (1-p) \mu_{-1} \boldsymbol{\nu}_{-1}) + \tilde{C}(\beta) (\mu_1 - \mu_{-1})(\boldsymbol{\nu}_1 - \boldsymbol{\nu}_{-1}) \right), \\
    \ssigma_{2,2} &= \I + \ttheta_0 \ttheta_0^{\top} - \alpha_2 -  (p \boldsymbol{\nu}_1 \boldsymbol{\nu}_1^\top + (1-p) \boldsymbol{\nu}_{-1} \boldsymbol{\nu}_{-1} ^\top) + \tilde{C}(\beta) (\boldsymbol{\nu}_1 - \boldsymbol{\nu}_{-1})(\boldsymbol{\nu}_1 - \boldsymbol{\nu}_{-1})^\top.
\end{align*}
Also, we can write $\ddelta = \frac{ \ggamma_{1,2}}{\gamma_{1,1}} = - \frac{\alpha_1}{1-\beta \alpha_0}.$ This is well defined since $1 - \beta \alpha_0 > 0$ by part (a).

First, note that $$\ssigma_{2,2} - \ggamma_{2,2} = \ttheta_0 \ttheta_0^\top - (p \boldsymbol{\nu}_1 \boldsymbol{\nu}_1^\top + (1-p) \boldsymbol{\nu}_{-1}\boldsymbol{\nu}_{-1}^\top) + \tilde{C}(\beta) (\boldsymbol{\nu}_1 - \boldsymbol{\nu}_{-1})(\boldsymbol{\nu}_1 - \boldsymbol{\nu}_{-1})^\top.$$
Also, noting that $\beta \ddelta \alpha_1^\top = \beta \alpha_1 \ddelta^\top =  - \frac{\ggamma_{1,2} \ggamma_{1,2}^\top}{\gamma_{1,1}},$ we can write
\begin{align*}
    & - \ssigma_{1,2} \ddelta^{\top} - \ddelta \ssigma_{1,2}^{\top} + \frac{\ggamma_{1,2} \ggamma_{1,2}^{\top}}{\gamma_{1,1}} \\
    = &-\beta (\ttheta_0 m - (p \mu_1 \boldsymbol{\nu}_1 + (1-p) \mu_{-1} \boldsymbol{\nu}_{-1})) \ddelta^\top - \beta \ddelta (\ttheta_0 m - (p \mu_1 \boldsymbol{\nu}_1 + (1-p) \mu_{-1} \boldsymbol{\nu}_{-1}))^\top \\
    & + \beta \tilde{C}(\beta) (\mu_1 - \mu_{-1}) \left((\boldsymbol{\nu}_1 - \boldsymbol{\nu}_{-1}) \ddelta^\top + \ddelta (\boldsymbol{\nu}_1 - \boldsymbol{\nu}_{-1})^\top \right) + \beta \ddelta \alpha_1^\top.
\end{align*}
For notational simplicity, let $\tilde{\ddelta} := \beta \ddelta$ and let $\mathbf{w}_{\pm 1} := \boldsymbol{\nu}_{\pm 1} - \tilde{\ddelta} \mu_{\pm 1}.$ Then by \eqref{eq:m identities}, we can write 
\begin{align}
    \ttheta_0 - \tilde{\ddelta} m &= p \boldsymbol{\nu}_1+(1-p) \boldsymbol{\nu}_{-1} - \tilde{\ddelta}(p \mu_1 + (1-p) \mu_{-1}) = p \w_1 +(1-p) \w_{-1}. \label{eq: theta0 - delta m}
\end{align}

Then, by simplifying the common quadratic terms multiplied by $p, 1-p$, and $\tilde{C}(\beta)$ in $\mathbf{M}$ (in the 1st equality), using \eqref{eq: theta0 - delta m} and rearranging quadratic forms involving $\w_1,\w_{-1}$ (in the 3rd equality), plugging-in the formula for $\tilde{C}(\beta)=C(\beta)/4$ from part (c) of \cref{defi:sigma} (in the 4th equality), and plugging-in $\tilde{\ddelta} = - \frac{\beta\alpha_1}{1-\beta \alpha_0}$ in all occurrences (in the 5th equality), we have %
\begin{align*}
    \mathbf{M} &= \ttheta_0 \ttheta_0^\top - (\tilde{\ddelta} \ttheta_0^\top + \ttheta_0 \tilde{\ddelta}^\top)m + \tilde{\ddelta} \tilde{\ddelta}^\top (1-\alpha_0) - p \w_1 \w_1^\top - (1-p) \w_{-1} \w_{-1}^\top \\
    & ~~ + \tilde{C}(\beta)(\w_1 - \w_{-1}) (\w_1 - \w_{-1})^\top + \tilde{\ddelta} \alpha_1^\top \\
    &= (\ttheta_0 - \tilde{\ddelta} m)(\ttheta_0 - \tilde{\ddelta} m)^\top + \tilde{\ddelta} \tilde{\ddelta}^\top (1 - \alpha_0 - m^2) - p \w_1 \w_1^\top - (1-p) \w_{-1} \w_{-1}^\top \\
    & ~~ +\tilde{C}(\beta)(\w_1 - \w_{-1}) (\w_1 - \w_{-1})^\top + \tilde{\ddelta} \alpha_1^\top \\
    &= \tilde{\ddelta} \tilde{\ddelta}^\top (1 - \alpha_0 - m^2) + \left( \tilde{C}(\beta) -p(1-p) \right) (\w_1 - \w_{-1}) (\w_1 - \w_{-1})^\top + \tilde{\ddelta}  \alpha_1^\top \\
    &=  \tilde{\ddelta} \tilde{\ddelta}^\top (1 - \alpha_0 - m^2) + \frac{\beta (1-m^2)^2}{4(1 - \beta (1-m^2))} (\w_1 - \w_{-1}) (\w_1 - \w_{-1})^\top + \tilde{\ddelta}  \alpha_1^\top \\
    &= - \frac{\beta \alpha_1 \alpha_1^\top}{(1- \beta \alpha_0)^2}(1 - \beta (1-m^2)) + \frac{\beta (1-m^2)^2}{4(1 - \beta (1-m^2))} (\w_1 - \w_{-1}) (\w_1 - \w_{-1})^\top.
\end{align*}

Hence, setting the RHS to zero, it suffices to prove 
\begin{align}
     \w_1-\w_{-1} = -\frac{2 (1-\beta(1-m^2)) \alpha_1}{(1-m^2) (1 - \beta \alpha_0)}. \label{eq: final simplification}
\end{align}
Recall that $\w_1 - \w_{-1} = \boldsymbol{\nu}_1 - \boldsymbol{\nu}_{-1} - \tilde{\ddelta} (\mu_1 - \mu_{-1}).$
Using \eqref{eq:alpha 1}, we have
$$\tilde{\ddelta} = - \frac{\beta \alpha_1}{1 - \beta \alpha_0} = \frac{\beta (1-m^2) (\boldsymbol{\nu}_1 - \boldsymbol{\nu}_{-1})/2}{1 - \beta \alpha_0},$$
and hence
\begin{align}
    \w_1 - \w_{-1} = (\boldsymbol{\nu}_1 - \boldsymbol{\nu}_{-1}) \left(1 - \frac{\beta (1-m^2) (\mu_1 - \mu_{-1})}{2(1 - \beta \alpha_0)} \right).
\end{align}
Again using \eqref{eq:alpha 1}, the RHS of \eqref{eq: final simplification} can be written as 
\begin{align*}
    -\frac{2 (1-\beta(1-m^2)) \alpha_1}{(1-m^2) (1 - \beta \alpha_0)} = \frac{(1 - \beta (1-m^2))(\boldsymbol{\nu}_1 - \boldsymbol{\nu}_{-1})}{1 - \beta \alpha_0}.
\end{align*}
Hence, \eqref{eq: final simplification} holds when the following scalar identity is true
$$ 1 - \frac{\beta (1-m^2) (\mu_1 - \mu_{-1})}{2(1 - \beta \alpha_0)} = \frac{1 - \beta (1-m^2)}{1 - \beta \alpha_0}.$$
By multiplying each side by $1-\beta \alpha_0$ and rearranging terms, the above is equivalent to
$$\alpha_0 = (1-m^2) (1 - \frac{\mu_1 - \mu_{-1}}{2}),$$
which was already shown in \eqref{eq:alpha_0}. This concludes the proof of $A=0$.
\vspace{2mm}

Finally, we show the positive definiteness claim in \eqref{eq:variance information eq} and finish the proof. This follows as
\begin{align*}
    &\ddelta \sigma_{1,1} \ddelta^{\top} - \ssigma_{1,2} \ddelta^{\top} - \ddelta \ssigma_{1,2}^{\top} + \ssigma_{2,2} \\
    =& \EE \left[ \Var (- \beta \ddelta \tanh(\beta m + \ttheta_0^\top \X) + \X \tanh(\beta m + \ttheta_0^\top \X) \mid Z) \right] \\
    &+ C(\beta) (\w_1 - \w_{-1})(\w_1 - \w_{-1})^\top \succ 0.
\end{align*}
The strict inequality follows from noting that the $\Var$ in the first term is positive definite for both $Z = \pm 1$, and that the second term is positive semi-definite.
\end{enumerate}
\end{proof}

\end{appendix}

\end{document}